\newtheorem{thm}{Theorem}[section]
\newtheorem{defi}[thm]{Definition}
\newtheorem{lem}[thm]{Lemma}
\newtheorem{cor}[thm]{Corollary}
\newtheorem{ex}[thm]{Example}
\newtheorem{rmk}[thm]{Remark}
\def\Aut{{\rm Aut}}
\def\Hom{{\rm Hom}}
\def\ord{{\rm ord}}
\def\min{{\rm min}}
\def\max{{\rm max}}
\def\inf{{\rm inf}}
\def\sup{{\rm sup}}
\def\lim{{\rm lim}}
\def\limsup{{\rm lim\,sup}}
\def\liminf{{\rm lim\,inf}}
\def\dif{{\rm d}}
\def\interior{{\rm int}}
\def\Supp{{\rm Supp}}
\def\gr{{\rm gr}}
\def\pr{{\rm pr}}
\def\wt{{\rm wt}}
\def\nv{\mathfrak{v}}
\def\Proj{{\rm Proj}}
\def\Spec{{\rm Spec}}
\def\Diff{{\rm Diff}}
\def\triv{{\rm triv}}
\def\log{{\rm log}}
\def\vol{{\rm vol}}
\def\Val{{\rm Val}}
\def\lct{{\rm lct}}
\def\DH{{\rm DH}}
\def\LE{{\rm LE}}
\def\Fut{{\rm Fut}}
\def\Cone{{\rm Cone}}
\def\loc{{\rm loc}}
\def\BP{\mathbf{P}}
\def\BQ{\mathbf{Q}}
\def\BO{\mathbf{O}}
\def\Bv{\mathbf{v}}
\def\BH{\mathbf{H}}
\def\QGamma{\rm{Q}\Gamma}
\def\PGamma{\rm{P}\Gamma}
\newcommand{\IA}{{\mathbb A}}
\newcommand{\IC}{{\mathbb C}}
\newcommand{\IG}{{\mathbb G}}
\newcommand{\Ik}{{\mathbbm k}}
\newcommand{\IN}{{\mathbb N}}
\newcommand{\IP}{{\mathbb P}} 
\newcommand{\IQ}{{\mathbb Q}} 
\newcommand{\IR}{{\mathbb R}}
\newcommand{\IT}{{\mathbb T}}
\newcommand{\IZ}{{\mathbb Z}}
\newcommand{\CC}{{\mathcal C}}
\newcommand{\CF}{{\mathcal F}}
\newcommand{\CG}{{\mathcal G}}
\newcommand{\CL}{{\mathcal L}}
\newcommand{\CO}{{\mathcal O}}
\newcommand{\CV}{{\mathcal V}}
\newcommand{\CY}{{\mathcal Y}}
\newcommand{\fm}{\mathfrak{m}}
\newcommand{\seq}{\subseteq}
\newcommand{\la}{\langle}
\newcommand{\ra}{\rangle}
\newcommand{\bu}{\bullet}
\newcommand{\lam}{\lambda}
\newcommand{\D}{\Delta}
\title{K\"ahler-Ricci solitons on Fano threefolds with non-trivial moduli}
\author{Minghao Miao}
\address{Department of Mathematics, Nanjing University, Nanjing 210093, China}
\curraddr{}
\email{minghao.miao@smail.nju.edu.cn}
\thanks{}
\keywords{}
\date{}
\dedicatory{}
\author{Linsheng Wang}
\address{Shanghai Center for Mathematical Sciences, Fudan University, Shanghai, 200438, China}
\curraddr{}
\email{linsheng\_wang@fudan.edu.cn}
\thanks{}
\keywords{}
\date{}
\dedicatory{}
\begin{document}

\begin{abstract}
We find Fano threefolds $X$ admitting K\"ahler-Ricci solitons (KRS) with non-trivial moduli, which are $\mathbb{T}$-varieties of complexity two. More precisely, we show that the weighted K-stability of $(X,\xi_0)$ (where $\xi_0$ is the soliton candidate) is equivalent to certain GIT-stability. 
In particular, this provides the first examples of strictly weighted K-semistable Fano varieties. 

On the other hand, we generalize Koiso's theorem to the log Fano setting. Indeed, we show that the K-stability of a log Fano pair $(V,\Delta_V)$ is equivalent to the weighted K-stability of a cone $(Y, \Delta_Y, \xi_0)$ over it. This also leads to new examples of KRS Fano varieties with non-trivial moduli and small automorphism groups. 

To achieve these, we establish the weighted Abban-Zhuang estimate generalizing the work of \cite{AZ22}, which gives a lower bound of the weighted stability threshold $\delta^g_{\mathbb{T}}(X,\Delta)$. This is an effective way to check the weighted K-semistablity of a log Fano triple $(X,\Delta,\xi_0)$. This estimate is also useful in testing (weighted) K-polystability based on the work of \cite{BLXZ23}. 
\end{abstract}




\maketitle


\section{Introduction}
The existence of canonical metrics on a given manifold is a fundamental problem in K\"ahler geometry. The so-called Yau-Tian-Donaldson (YTD) conjecture predicts that the existence of canonical metrics is equivalent to certain algebro-geometric stability conditions. The concept of K-stability introduced by Tian \cite{Tia97} detects the existence of the K\"ahler-Einstein (KE) metrics on a Fano manifold. It was proved in \cite{Tia15} that a Fano manifold admits a K\"ahler-Einstein metric if and only if it is K-polystable. Other proofs were also given in \cite{CDS15,DS16,CSW18,BBJ18,Li19,Zhang22}.

However, it is in general very hard to verify the K-stability of a given Fano manifold $X$. The first powerful tool in testing K-stability is Tian's criterion \cite{Tia87} via the $\alpha$-invariant, with a notable application that every smooth Fano hypersurface of index one is K-stable \cite{Fuj19}. More recently, \cite{AZ22} introduced an inductive approach to give a lower bound estimate of the stability threshold $\delta(X)$, which proved that every smooth Fano hypersurface of index $r \ge 3$ and dimension $\ge r^3$ is K-stable \cite{AZ23}. The Abban-Zhuang theory is also widely used in the study of the K-stability of Fano threefolds, see \cite{ACC+}. Besides testing K-stability, the Abban-Zhuang theory is also useful in finding the minimizer of the local delta invariant. For example, \cite[Theorem 4.6]{AZ22} found the minimizer of $\delta_p(X)$ for every cubic surface $X$ and every closed point $p\in X$. 

There are many Fano manifolds not admitting KE metric. The K\"ahler-Ricci soliton (KRS) metrics provide a natural generalization of the KE metrics on Fano manifolds, which is closely related to the limiting behavior of the K\"ahler-Ricci flow on Fano manifolds. The uniqueness of K\"ahler-Ricci solitons is established by \cite{TZ00,TZ02}, 
There are several results on the existence of K\"ahler-Ricci solitons on Fano varieties with large symmetry, for example, on toric varieties \cite{WZ04, SZ12, BB13}, on spherical varieties \cite{LZZ18, Del20, LLW22} and on $\IT$-varieties of complexity one \cite{IS17, CS18, HHS23}. 
In this paper, we find new examples of Fano threefolds admitting K\"ahler-Ricci solitons, which are $\IT$-varieties of complexity two (Section \ref{Subsection:torus action}). 
\begin{thm}[Theorem \ref{Theorem. stability of 2.28} and \ref{Theorem: stability of 3.14}]
\label{Theorem: Intro №2.28 and No 3.14 soliton}
Every smooth Fano threefold in the families {\rm №2.28} and {\rm №3.14} of Mori and Mukai's list admits a K\"ahler-Ricci soliton. 
\end{thm}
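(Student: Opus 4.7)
The plan is to prove existence of the KRS via the $\IT$-equivariant weighted Yau--Tian--Donaldson correspondence, by establishing weighted K-polystability of every smooth member of each family. The argument naturally splits into three steps: (i) identify the soliton candidate $\xi_0$; (ii) prove weighted K-semistability of $(X,\xi_0)$ for every member via the weighted Abban--Zhuang estimate stated in the abstract; (iii) upgrade semistability to polystability on smooth members by identifying the weighted K-moduli of the family with a GIT quotient.

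For Step (i), each family carries a maximal torus $\IT$ of complexity two, and the soliton candidate $\xi_0\in N_\IR$ is the unique element of the Reeb cone making the weighted Futaki invariant vanish in the sense of Tian--Zhu \cite{TZ00,TZ02}; I would compute it by $\IT$-equivariant intersection theory on a convenient symmetric member. For Step (ii), the main tool is the weighted Abban--Zhuang estimate developed in the paper, which bounds $\delta^g_\IT(X,\xi_0)$ from below via refinement along a $\IT$-invariant flag $X\supset S\supset C$ with $\dim S=2$, $\dim C=1$. I would pick $S$ and $C$ compatible with the torus action on the explicit Mori--Mukai models, compute the induced weighted log pairs $(S,\Delta_S)$ and $(C,\Delta_C)$, and bound weighted $S$-invariants stratum by stratum to obtain $\delta^g_\IT(X,\xi_0)\ge 1$. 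For Step (iii), the paper's main theorem reduces weighted K-(poly)stability of $(X,\xi_0)$ to a GIT problem on the moduli data; a smooth $X$ corresponds to a GIT-polystable point, hence is weighted K-polystable, and the $\IT$-equivariant weighted YTD criterion via \cite{BLXZ23} then delivers the KRS.

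The main obstacle is Step (ii): adapting the Abban--Zhuang refinement to a complexity-two $\IT$-action requires choosing a flag compatible with the torus while keeping the weighted-volume and weighted-$S$-invariant computations tractable, and for families №2.28 and №3.14 this means performing the bound on models with genuinely two-dimensional GIT moduli rather than on a single rigid Fano. A secondary difficulty lies in Step (iii): one must ensure that the strictly weighted K-semistable examples announced in the abstract appear only at GIT-unstable boundary points of the moduli, so that no smooth representative is missed by the polystability conclusion.
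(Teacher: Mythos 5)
Your Steps (i)--(ii) are essentially the paper's route: the soliton candidate is found from the vanishing of the modified Futaki invariant on the (one-dimensional) moment polytope, and semistability is proved by the weighted Abban--Zhuang estimate with a first refinement along the plane $H_u$ and a second refinement along lines or weighted-blowup exceptional curves inside $H_u\cong\IP^2$, giving $\delta^g_{p}(\IP^2;W_\bu)>1$ for every $p$. (Two remarks: the maximal torus here is only $\IG_m$, the action being of complexity two, and the reduction to points of $H_u$ is not automatic --- it uses $G$-equivariant weighted K-semistability for the full group $G=\Aut^0(X)=\IG_a^3\rtimes\IG_m$, resp.\ $\IG_m$ for №3.14, so ``stratum by stratum'' must be justified by that equivariance result.)

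The genuine gap is Step (iii). You propose to upgrade semistability to polystability on smooth members by invoking the identification of weighted K-(poly)stability with GIT-stability of the plane cubic and observing that a smooth cubic is GIT-stable. But in the paper that identification is a \emph{consequence} of, not an input to, the polystability of smooth members: the direction ``GIT-stable cubic $\Rightarrow$ $(X,\xi_0)$ weighted K-polystable'' is exactly the statement you are trying to prove, and there is no off-the-shelf moduli comparison (no pre-existing weighted K-moduli space for these families) that delivers it. The paper instead proves polystability directly: assuming $(X,\xi_0)$ is strictly semistable, \cite[Lemma 4.14]{BLXZ23} produces a $\IT$-invariant quasi-monomial valuation $v\neq\wt_\xi$ with $A_X(v)=S^g(R_\bu;v)$; since $H_u$ is a Chow quotient for the $\IG_m$-action, $\Val^{\IT,\circ}_X\cong\Val^\circ_{H_u}\times N_\IR$, so after twisting $v$ is the pullback of some $v_0$ on $H_u$ with $A_X(v)=A_{H_u}(v_0)$ and $S^g(R_\bu;v)=S^g(W_\bu;v_0)$, contradicting the strict bound $\delta^g_p(\IP^2;W_\bu)>1$ obtained in Step (ii). This is the missing mechanism in your outline: the strictness of the second-step estimate, not GIT, is what forces any $\delta^g$-minimizer to be a product vector field. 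Your secondary worry about strictly semistable examples is likewise resolved by this argument rather than by moduli considerations --- the strictly weighted K-semistable members (nodal cubic, conic plus secant) are singular and never smooth representatives of the families.
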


By the celebrated works of \cite{HL23} and \cite{BLXZ23}, a log Fano triple $(X,\D,\xi_0)$ (where $(X,\D)$ is a log Fano pair with a $\IT$-action and $\xi_0\in N(\IT)_\IR$ is a soliton candidate, see Section \ref{Subsection: soliton candidate}) admits a twisted K\"ahler-Ricci soliton if and only if it is weighted K-polystable (proved by \cite{DS16, CSW18} for smooth $X$ and $\D$ = 0).
It suffices to show that $(X,\xi_0)$ is weighted K-polystable, where $X$ is a Fano threefold in the above families in Theorem \ref{Theorem: Intro №2.28 and No 3.14 soliton}, and the holomorphic vector field $\xi_0$ is the soliton candidate of $X$ (see Section \ref{Subsection: soliton candidate}). Since there is only one-dimensional torus action on $X$, all known methods do not apply. 

To overcome the difficulty, we establish the weighted Abban-Zhuang estimate to show that the weighted stability threshold $\delta^g_{\IT}(X,\D)\geq 1$ (see Section \ref{Subsection. Weighted stability thresholds} for the definition of $\delta^g_{\IT}(X,\D)$ and Remark \ref{Remark. choice of g} for the choice of $g$), which proves the weighted K-semistability of a log Fano triple $(X,\D,\xi_0)$. 

\begin{thm}[Theorem \ref{Theorem: weighted AZ}, weighted Abban-Zhuang estmate]
\label{Theorem: Intro weighted Abban-Zhuang}
Let $F$ be a $\IT$-invariant plt-type divisor (Section \ref{Subsection. Valuations}) over $X$, and $Z \subseteq X$ be a $\IT$-invariant subvariety contained in $C_X(F)$. For any $\IT$-invariant multi-graded linear series $V_\bu$ on $X$ we have
\begin{eqnarray*} 
\delta^{g}_{Z, \IT}(X,\D;V_\bu) \ge \min\Big\{\frac{A_{X,\D}(F)}{S^{g}(V_\bu; F)},\,\, \delta^{g}_{Z,\IT}(F,\D_F;W_\bu) \Big\}, 
\end{eqnarray*}
where $W_\bu$ is the refinement of $V_\bu$ by $F$. 
\end{thm}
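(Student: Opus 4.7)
The plan is to imitate the Abban--Zhuang argument of \cite{AZ22} in a weight-equivariant fashion. Since $\delta^{g}_{Z,\IT}(X,\D;V_\bu)$ is by definition the infimum of $A_{X,\D}(v)/S^{g}(V_\bu;v)$ over $\IT$-invariant valuations $v$ on $X$ with $Z\subseteq C_X(v)$, it suffices to bound this quotient from below by the right-hand side for each such $v$. Set $t:=v(F)$. On a $\IT$-equivariant plt extraction on which $F$ appears as a Koll\'ar component, the ``difference'' $v-t\cdot\ord_F$ descends to a $\IT$-invariant valuation $v'$ on $F$ whose center still contains $Z$, since $Z\subseteq C_X(F)\cap C_X(v)$.

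For the log discrepancy, adjunction on the plt extraction gives the clean identity
$$A_{X,\D}(v) = t\,A_{X,\D}(F) + A_{F,\D_F}(v'),$$
exactly as in the unweighted case \cite[\S3]{AZ22}. The heart of the argument is then the weighted refinement inequality
$$S^{g}(V_\bu; v) \le t\,S^{g}(V_\bu; F) + S^{g}(W_\bu; v').$$
Because $F$ is $\IT$-invariant, the $\ord_F$-filtration on each graded piece of $V_\bu$ commutes with the $\IT$-weight decomposition $V_{m,\bu}=\bigoplus_\lambda V_{m,\bu,\lambda}$, so the refinement $W_\bu$ is again naturally $\IT$-graded. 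I would establish the refinement inequality by running the standard two-step Newton--Okounkov construction of \cite{AZ22} on each weight component, then integrating against $g(\lambda/m)$ in $\lambda$ in the limit $m\to\infty$; the Fubini/concavity step that produces the original inequality is preserved by this extra integration, since $g\ge 0$. Verifying that $g$ enters compatibly with the two concave transforms, uniformly in $m$ and in the level $s$ of the $v$-filtration, is the main technical obstacle.

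Once both ingredients are in place, the conclusion follows from the elementary fact that for positive reals $a,c$ and nonnegative $b,d$ with $c,d>0$, one has $(a+b)/(c+d)\ge\min\{a/c,\,b/d\}$. Applied with $a=t\,A_{X,\D}(F)$, $c=t\,S^{g}(V_\bu;F)$, $b=A_{F,\D_F}(v')$, and $d=S^{g}(W_\bu;v')$, this yields
$$\frac{A_{X,\D}(v)}{S^{g}(V_\bu;v)} \ge \min\Big\{\frac{A_{X,\D}(F)}{S^{g}(V_\bu;F)},\,\frac{A_{F,\D_F}(v')}{S^{g}(W_\bu;v')}\Big\}.$$
The second quantity in the minimum is bounded below by $\delta^{g}_{Z,\IT}(F,\D_F;W_\bu)$ by definition applied to the $\IT$-invariant valuation $v'$ on $F$, while the degenerate case $v=c\cdot\ord_F$ (forcing $v'\equiv 0$) directly yields the first term. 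Taking the infimum over all admissible $v$ completes the proof.
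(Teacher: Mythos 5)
Your reduction to a per-valuation inequality does not get off the ground: the key construction in your second sentence --- that for an arbitrary $\IT$-invariant valuation $v$ with $Z\subseteq C_X(v)$ the ``difference'' $v-t\cdot\ord_F$ descends to a valuation $v'$ on $F$ --- does not exist in general. A difference of valuations is not a valuation, there is no canonical restriction map $\Val_X\to\Val_F$, and the center of $v$ on the plt blowup $\pi\colon Y\to X$ need not even be contained in $F$ (it only has to contain a preimage point of $Z$), in which case any ``induced'' object on $F$ is trivial. Consequently both claimed ingredients fail for general $v$: the identity $A_{X,\D}(v)=t\,A_{X,\D}(F)+A_{F,\D_F}(v')$ holds only for very special valuations (quasi-monomial, cone-like combinations of $\ord_F$ with a valuation on $F$, as used in Section~8 of the paper for pull-back valuations), not for the valuations over which the infimum defining $\delta^g_{Z,\IT}$ runs; and the refinement inequality $S^{g}(V_\bu;v)\le t\,S^{g}(V_\bu;F)+S^{g}(W_\bu;v')$ is not a known statement at this generality --- e.g.\ for $v=\ord_G$ with $G$ a divisor through $Z$ transverse to $F$ one can have $t=0$, and the inequality would force $S^g(V_\bu;v)\le S^g(W_\bu;v')$ with $v'$ trivial, which is false. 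The mediant inequality at the end is fine, but it is fed with objects that have not been constructed. This is precisely the difficulty the Abban--Zhuang method is designed to avoid.

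The paper's proof takes the basis-divisor route instead: it packages the $\IT$-invariant $g$-weighted $m$-basis type divisors of $V_\bu$ as basis type divisors of the $g$-weighted boundary $V^g_m$ (Definition \ref{Definition: g-weighted boundary}), uses $\delta^{g}_{Z,\IT,m}(X,\D;V_\bu)=\inf_D\lct_Z(X,\D;D)$ over such divisors compatible with $F$ (formula (\ref{formula: delta_m = lct}), which relies on the lct being computed by a $\IT$-invariant valuation), decomposes $\pi^*D=S^{g}_m(V_\bu;F)\,F+\Gamma$ with $\Gamma|_F$ a $g$-weighted basis type divisor of the refinement $W_\bu$, and then applies inversion of adjunction along $F$ (Lemma \ref{Theorem: weighted AZ 2}); finally it passes from the $m$-level estimate to the limit using the uniform comparison $S^g_m\le(1+\varepsilon)S^g$ of Lemma \ref{Lemma: uniform bound of S^g_m/S^g}. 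If you want to salvage your outline, you would have to replace the valuation decomposition by this compatible-basis/inversion-of-adjunction mechanism (or prove the missing refinement inequality for a class of valuations that is known to compute the infimum, which is not established here). Note also that the sharper form of the theorem proved in the paper has $\inf_{\pi(Z')=Z}\delta^{g}_{Z',\IT}(F,\D_F;W_\bu)$ on the right-hand side, a point your statement of the target glosses over.
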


The theorem is an equivariant version of the Abban-Zhuang estimate \cite[Theorem 3.2]{AZ22} when $g=1$. Surprisingly, this estimate is not only useful in checking the (weighted) K-semistability but also in checking the (weighted) K-polystability, when the $\IT$-action on $F$ is trivial and the second term on the right-hand side of the estimate is strictly greater than $1$, see the proof of Theorem \ref{Theorem. stability of 2.28}. 

The moduli spaces of Fano threefolds in the families №2.28 and №3.14 are non-trivial. 
Recall that every smooth Fano threefold in the family №2.28 is given by the blowup of $\IP^3$ along a smooth plane cubic curve $C\seq H \cong \IP^2$. If further blowing up a point outside $H$, we get a smooth Fano manifold in the family №3.14. Parallel with the proof of Theorem \ref{Theorem: Intro №2.28 and No 3.14 soliton}, we will show that for any Fano threefold $X$ in the families №2.28 and №3.14, the weighted K-stability of $(X,\xi_0)$ is equivalent to the GIT-stability of the plane cubic curves. 

\begin{thm}[Theorem \ref{Theorem: equivalence of weighted K-stability of №2.28 and №3.14 to the GIT-stability of plane cubic}]
\label{Theorem: Intro GIT and weighted K, 2.28 and 3.14}
Let $X$ be the blowup of $\IP^3$ along a plane cubic curve $C\seq H\cong \IP^2$ or further blowing up a point outside $H$. Let $\xi_0$ be the soliton candidate of $X$. Then $(X, \xi_0)$ is weighted K-semistable (weighted K-polystable) if and only if $C\seq H$ is GIT-semistable (GIT-stable or polystable). 
\end{thm}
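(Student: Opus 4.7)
The plan is to reduce both the weighted K-semistability and K-polystability of $(X,\xi_0)$ to the GIT-(semi/poly)stability of the plane cubic $C \seq H \cong \IP^2$ via a single application of the weighted Abban-Zhuang estimate (Theorem \ref{Theorem: Intro weighted Abban-Zhuang}). The natural $\IT = \IG_m$-action on $X$ comes from scaling the coordinate transverse to $H$ in $\IP^3$; for №3.14 the action is chosen so that the extra blown-up point is the unique $\IT$-fixed point off $H$. By \cite{HL23,BLXZ23}, weighted K-(semi/poly)stability may be tested along $\IT$-equivariant data.

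For the ``if'' direction, I take $F = \tilde H$, the strict transform of $H$. Since $C \seq H$ is a divisor, $\tilde H \cong \IP^2$, and the $\IT$-action on $F$ is trivial, so the weighted $\delta$-invariant on $F$ coincides with the classical one. First I would compute $A_X(\tilde H)/S^g(V_\bu; \tilde H)$ from the volume polynomial of $-K_X - t\tilde H$ weighted by $g(\la \xi_0, \cdot \ra)$ and verify it is at least $1$. Next I would unwind the refinement $W_\bu$ on $\tilde H \cong \IP^2$, which via a Zariski-decomposition-type calculation becomes a linear series whose $\delta$-invariant equals $\delta(\IP^2, \lambda_0 C)$ for a specific $\lambda_0 \in (0,1) \cap \IQ$ determined by the volume and weight data. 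Classical results for plane cubics give $\delta(\IP^2, \lambda_0 C) \ge 1$ precisely when $C$ has at worst nodal singularities, i.e.\ when $C$ is GIT-semistable; the two terms of Theorem \ref{Theorem: Intro weighted Abban-Zhuang} then combine to give $\delta^g_{\IT}(X) \ge 1$. For №3.14 a further refinement with respect to the exceptional divisor $E_1$ over the extra blown-up point accommodates the added geometry, while leaving the coupling to $C$ intact.

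For the ``only if'' direction, a GIT-destabilizing one-parameter subgroup $\lam: \IG_m \to \mathrm{SL}_3$ of $C$ lifts (acting trivially on the transverse coordinate to $H$) to a $\IT$-commuting one-parameter subgroup of $\Aut(\IP^3, H)$, whose orbit closure in the universal blowup family induces a $\IT$-equivariant special test configuration $\CX$ of $X$. A direct Duistermaat--Heckman computation shows that the weighted Donaldson--Futaki invariant of $\CX$ equals a positive multiple of the Hilbert--Mumford weight $\mu^L(C,\lam)$, so GIT-instability of $C$ produces weighted K-instability of $(X,\xi_0)$. Once semistability has been settled, the polystability criterion of \cite{BLXZ23} reduces weighted K-polystability of $(X,\xi_0)$ to the absence of $\IT$-equivariant special test configurations with vanishing weighted Futaki invariant and non-product central fibre; via the above correspondence these are exactly the degenerations of strictly GIT-semistable but non-polystable $C$, so weighted K-polystability matches GIT-polystability of $C$.

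The main technical obstacle is the explicit identification of the refinement $W_\bu$ on $\tilde H \cong \IP^2$ and of its weighted $\delta$-invariant with $\delta(\IP^2,\lambda_0 C)$. This requires careful bookkeeping of how sections of $-mK_X$ vanish along $\tilde H$ and restrict to sections cutting out multiples of $C$, together with a delicate balance between $A_X(\tilde H)$ and the weighted expectation $S^g(V_\bu;\tilde H)$, so that $\lambda_0$ matches precisely the classical GIT weight on the space of plane cubics. The №3.14 case adds complications in interleaving the refinement by $E_1$ with the refinement by $\tilde H$ without disturbing this coupling.
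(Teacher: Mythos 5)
Your overall shape (refine along the strict transform of $H$, use the weighted Abban--Zhuang estimate, and couple the residual data on $\IP^2$ to the cubic $C$) matches the paper's starting point, but the three steps that actually carry the theorem are asserted rather than proved, and as stated some of them are not correct. First, the claimed identity ``$\delta^g$ of the refinement $W_\bu$ equals $\delta(\IP^2,\lambda_0C)$'' is false as an equality of invariants: unwinding the refinement gives $W_{(1,w)}=H^0(\CO(3+2w))$ for $-1\le w<0$ and $wC+H^0(\CO(3-w))$ for $0\le w\le 3$, so for any valuation $v$ on $\IP^2$ one gets $S^g(W_\bu;v)=a\,S(\CO(1);v)+b\,v(C)$ with explicit $g$-integrals $a,b$, and the functional $A/(aS+bv(C))$ is not $\,(A-\lambda_0 v(C))/((3-3\lambda_0)S)$. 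What is true, and what you would have to prove, is that the two ``$\ge 1$'' thresholds agree because the soliton equation $\Fut_g(1)=0$ forces exactly $a=3-3b$; you neither establish this nor compute $\lambda_0=b\approx 0.226$, and you then lean on an unspecified ``classical result'' for K-semistability of $(\IP^2,\lambda_0C)$ at that particular coefficient. The paper does none of this: it proves semistability/instability by direct case-by-case computations of $S^g$ against the singularities of $C$ (lines and $(2,1)$-, $(3,1)$-, $(1,1)$-, $(2,3)$-weighted blowups), where, e.g., at a node $A=S^g=2$ and at a cusp the $(2,3)$-blowup gives $S^g\approx 5.226>5=A$, which is also how the GIT-unstable direction is handled (pull the destabilizing divisor back to a vertical divisor on $X$).

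Second, your ``only if'' mechanism --- that the weighted Donaldson--Futaki invariant of the test configuration induced by a GIT-destabilizing one-parameter subgroup ``equals a positive multiple of the Hilbert--Mumford weight'' --- is exactly the content of the theorem and is given no argument; with the weight $g$ present this proportionality is not obvious and would require a CM-line-bundle-type computation that you do not sketch. Third, the polystability step has a genuine gap: invoking the criterion of \cite{BLXZ23} requires ruling out \emph{all} $\IT$-invariant non-product degenerations (equivalently, non-toric delta-minimizing valuations) with vanishing weighted Futaki invariant, not only those arising from degenerations of $C$ inside $\IP^2$; your ``via the above correspondence these are exactly\ldots'' assumes the surjectivity of that correspondence, which is the hard part. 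The paper supplies precisely this missing input in two different ways: in the GIT-stable case by the Chow-quotient argument identifying $\Val^{\IT,\circ}_X\cong\Val^\circ_{H_u}\times N_\IR$ together with the strict inequality $\delta^g_p(\IP^2;W_\bu)>1$, and in the GIT-polystable non-stable case (three non-concurrent lines) by observing that $X$ is toric and applying \cite[Theorem 5.11]{BLXZ23}; strict semistability of the remaining semistable cubics is then obtained by exhibiting explicit special degenerations to the three-lines case. Without these ingredients your plan does not close either the polystable or the strictly semistable cases.
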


Hence the GIT-moduli space (stack) of plane cubic curves should be viewed as a weighted K-moduli space (stack) of Fano threefolds in the families №2.28 or №3.14. 
To the best of the authors' knowledge, these may be the first examples of KRS Fano manifolds with non-trivial moduli, which give an answer to \cite[Question 5.6]{Ino19}. 

On the other hand, we have the following generalization of Koiso's theorem \cite{Koi90}, which establishes the equivalence between the K-stability of a log Fano pair $(V,\D)$ and the weighted K-stability of the cone $(Y, \D_Y, \xi_0)$ (see Section \ref{Section: Weighted K-stability of cones}) over $(V,\D_V)$. 

\begin{thm}[Theorem \ref{Theorem: soliton of cone}]
\label{Theorem: Intro stability of cones}
Let $(V, \D_V)$ be a $(n-1)$-dimensional log Fano pair such that $L=-\frac{1}{r}(K_V+\D_V)$ is an ample Cartier divisor for some $0<r\le 1$. 
Let $Y=\overline{\CC}(V,L)$ be the projective cone over $V$ with polarization $L$, and $\D_Y$ be the closure of $\D_V\times \IC^*$ in $Y$. Then $(Y, \D_Y, \xi_0)$ is weighted K-semistable (weighted K-polystable) if and only if $(V,\D_V)$ is K-semistable (K-stable or K-polystable).
\end{thm}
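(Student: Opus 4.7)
The plan is to reduce the weighted stability of $(Y, \D_Y, \xi_0)$ to ordinary stability of $(V, \D_V)$ by an explicit comparison of log discrepancies and weighted $S$-invariants on $\IT$-invariant valuations, where $\IT = \IC^*$ is the cone-scaling torus on $Y$. A computation on the $\IP^1$-bundle resolution $\widetilde Y = \IP_V(\CO \oplus L) \to Y$ yields the cone canonical formula $-K_Y - \D_Y \sim_\IQ (1+r) V_\infty$, where $V_\infty \cong V$ is the section at infinity; in particular $A_{Y, \D_Y}(V_\infty) = 1$. The $\IT$-action fixes $V_\infty$ pointwise and splits the anticanonical ring as
\[ H^0(Y, -m(K_Y+\D_Y)) = \bigoplus_{k=0}^{m(1+r)} s^k \cdot H^0(V, (m(1+r)-k)L), \]
with weight $k$ on the $k$-th summand, $s$ being the cone coordinate. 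Every relevant $\IT$-invariant quasi-monomial valuation on $Y$ then takes the form $v_{a, v'}(f \cdot s^k) = v'(f) + a k$ for a quasi-monomial $v'$ on $V$ and a parameter $a \ge 0$, and the cone formula gives $A_{Y,\D_Y}(v_{a, v'}) = A_{V,\D_V}(v') + a$.

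The key computation is the weighted $S$-invariant. Put $J(\xi_0) := \int_0^{1+r} t\, e^{\xi_0 t}(1+r-t)^{n-1} dt\, /\int_0^{1+r} e^{\xi_0 t}(1+r-t)^{n-1} dt$; the Duistermaat--Heckman density along the cone direction is $(1+r-t)^{n-1}$, coming from $h^0(V, (m(1+r)-k)L) \sim (m(1+r-t))^{n-1} L^{n-1}/(n-1)!$ with $t = k/m$. A direct integration against the weight $g(t) = e^{\xi_0 t}$ and the identity $-K_V - \D_V = rL$ then yield
\[ S^g_{Y,\D_Y}(V_\infty) = J(\xi_0), \qquad S^g_{Y,\D_Y}(v_{a, v'}) = \frac{(1+r) - J(\xi_0)}{r}\, S_{V,\D_V}(v') + J(\xi_0)\cdot a, \]
where the $S_{V,\D_V}(v')$-coefficient is obtained via $(1+r-t)^n = (1+r)(1+r-t)^{n-1} - t(1+r-t)^{n-1}$.

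The soliton candidate $\xi_0$ is uniquely characterized by the weighted Futaki balancing at the cone degeneration, which in this setup reduces to the single strictly monotone one-variable equation $J(\xi_0) = 1$. Substituting $J(\xi_0) = 1$, the $S_{V,\D_V}(v')$-coefficient collapses to $1$, and therefore $S^g_{Y,\D_Y}(v_{a, v'}) = S_{V,\D_V}(v') + a$, giving
\[ A_{Y,\D_Y}(v_{a, v'}) - S^g_{Y,\D_Y}(v_{a, v'}) = A_{V,\D_V}(v') - S_{V,\D_V}(v'), \]
independent of $a$. Taking infima (and invoking equivariant K-stability to restrict to $\IT$-invariant valuations on $Y$) yields the equivalence $\delta^g_\IT(Y, \D_Y) \ge 1 \Leftrightarrow \delta(V, \D_V) \ge 1$, which is the weighted K-semistability case.

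For the K-polystability upgrade, I would appeal to the minimizer characterization of \cite{BLXZ23}: polystability holds iff every minimizer of the weighted $\delta$-invariant is induced by a one-parameter subgroup of the automorphism group commuting with $\xi_0$. Under the bijection $v_{a,v'} \leftrightarrow (v',a)$ and the identity above, a minimizer on $Y$ corresponds to a minimizer $v'$ on $V$ together with an arbitrary $a \ge 0$; the $a$-component is the cone $\IT$-action itself (automatically of product type), and the $v'$-component is of product type on $Y$ iff $v'$ is of product type on $V$, via the cone-construction of test configurations $(\CV, \D_\CV, \CL) \leadsto (\overline{\CC}(\CV/\IA^1, \CL), \overline{\D_\CV \times \IC^*}, \xi_0)$. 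The main obstacle I anticipate is upgrading this valuation-level correspondence to a bijection at the level of special test configurations with vanishing weighted Futaki invariant; this is handled by applying the construction of \cite{BLXZ23} that produces a special degeneration from any $\delta$-minimizer in the $\IT$-equivariant weighted setting, and checking that the projective cone construction is compatible with special test configurations in families.
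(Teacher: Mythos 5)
Your proposal is correct in substance and its computational heart is the same as the paper's: the identity $\int_\BP \alpha\, g(\alpha)(r-\alpha)^{n-1}\dif\alpha=0$ (your $J(\xi_0)=1$) is exactly what makes the slice-wise invariant collapse, $S^g(W_\bu;v')=rS(L;v')=S(-K_V-\D_V;v')$, and the polystability upgrade via the minimizer characterization of \cite{BLXZ23} together with \cite{LXZ22} on $V$ is also the paper's mechanism. Where you differ is the packaging of semistability: the paper runs the weighted Abban--Zhuang estimate for the refinements of $R_\bu$ by $V_\infty$ and by $V_0$ and then gets the reverse inequality from pullback valuations, whereas you bypass Abban--Zhuang entirely by classifying the $\IT_c$-invariant valuations as $v_{a,v'}$ (via the Chow quotient, \cite[Lemma 4.2]{BHJ17}) and computing $A$ and $S^g$ on each of them; this is a legitimate and arguably more transparent route for a cone, since here the fibration structure is global and one does not need the inductive estimate.

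Two points need attention. First, your restriction to $a\ge 0$ is not a classification: the $\IT_c$-invariant valuations form $\Val_V\times\IR$, and the omitted half (negative twist, i.e.\ weight towards $V_0$, including all valuations centered at the cone vertex) must also be checked for semistability. This is harmless but should be said: for the twist of weight $|a|$ towards $V_0$ one has $A_{Y,\D_Y}=A_{V,\D_V}(v')+|a|\,A_{Y,\D_Y}(V_0)=A_{V,\D_V}(v')+|a|r$ while $S^g=S_{V,\D_V}(v')-|a|J(\xi_0)=S_{V,\D_V}(v')-|a|$, so $A-S^g$ increases by $|a|(1+r)$ and no new minimizers appear. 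Relatedly, your formula $A_{Y,\D_Y}(v_{a,v'})=A_{V,\D_V}(v')+a$ is correct only because your $s$ cuts out $V_\infty$ (where $A_{Y,\D_Y}(V_\infty)=1$); with $s$ interpreted as the affine cone coordinate vanishing on $V_0$ the increment would be $ar$, so fix the convention explicitly. Second, the "main obstacle" you anticipate for polystability (upgrading the valuation-level correspondence to special test configurations) does not arise: with the valuative definition used in the paper (equivalent to \cite{BLXZ23}), weighted K-polystability is exactly the statement that every $\IT$-invariant quasi-monomial minimizer with $A=S^g$ is of the form $\wt_\xi$, $\xi\in N(\IT)_\IR$ for $\IT=\IT_c\times\IT_V$; your identity $A-S^g=A_{V,\D_V}(v')-S_{V,\D_V}(v')$ plus \cite{LXZ22} on $(V,\D_V)$ then closes the argument without any family-wise cone construction of test configurations.
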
 

By this theorem, the K-moduli space (stack) of log Fano pairs $(V,\D_V)$ should be viewed as the weighted K-moduli space (stack) of log Fano triples $(Y, \D_Y, \xi_0)$. This also gives us a series of examples of KRS Fano varieties with non-trivial moduli.

\begin{rmk} \rm 
There are strictly semistable objects in the GIT-moduli space of plane cubic curves, and the K-moduli space of log Fano pairs $(V,\D_V)$. Theorem \ref{Theorem: Intro №2.28 and No 3.14 soliton} and \ref{Theorem: Intro stability of cones} give us examples of strictly weighted K-semistable Fano varieties. 
\end{rmk}

We briefly sketch the proof of Theorem \ref{Theorem: Intro №2.28 and No 3.14 soliton}. Recall that a Fano manifold $X$ in family №2.28 is the blowup of $\IP^3$ along a plane cubic $C\seq H$, where $H$ is the plane containing $C$. We still denote  by $H$ the strict transform of the plane. The Fano manifold $X$ is destabilized by $H$, hence is K-unstable and does not admit the KE metric. We take refinement of $R_\bu=R(-K_X)$ by $H$ in the first step, and get a $\IN^2$-graded linear series $W_\bu$. From the definition of the soliton candidate, we know that $A_X(H)/S^g(R_\bu; H)=1$. Explicit computation (see Section \ref{Subsection: computing S^g of №2.28}) shows that $\delta^g_{p,\IT}(H; W_\bu)>1$ for any point $p\in H$. By Theorem \ref{Theorem: Intro weighted Abban-Zhuang}, we deduce that $(X, \xi_0)$ is weighted K-semistable. To prove the weighted K-polystability, we need the existence of the delta-minimizer established in \cite[Lemma 4.14]{BLXZ23}. Assume that $(X,\xi_0)$ is strictly K-semistable. By the above lemma, there exists a $\IT$-invariant quasi-monomial valuation $v$ which is not of the form $\wt_\xi$ for $\xi\in N(\IT)_\IR$ such that $A_X(v)/S^g(R_\bu;v)=1$. We can find a valuation $v_0$ on $H$ whose extension on $X$ is just $v$, and it's not difficult to show that $A_X(v)=A_H(v_0)$ and $S^g(R_\bu;v)=S^g(W_\bu; v_0)$. Hence 
$$1=\frac{A_X(v)}{S^g(R_\bu;v)}=\frac{A_{H}(v_0)}{S^g(W_\bu;v_0)} \ge \delta^g_{p,\IT}(H;W_\bu)>1, $$
which is a contradiction. 

\begin{rmk}\rm
There is a well-known example found in \cite{Fut83} similar to the one we introduced above, that is, the blowup of $\IP^3$ along a plane conic and a point outside the plane. This is the first example of a Fano manifold with reductive automorphism group and does not admit a K\"ahler-Einstein metric. It admits a K\"ahler-Ricci soliton by \cite{CS18} based on the structure of $\IT$-varieties of complexity one. Using the same argument as above, we will get another proof of the existence of K\"ahler-Ricci soliton. 
\end{rmk}

The paper is organized as follows. In Section \ref{Section: Preliminaries}, we recall the basic notions in K-stability, especially the $\IT$-action on a graded linear series and the weight decomposition. The concepts of weighted K-stability can be found in Section \ref{Section: Weighted K-stability}. Various formulations of the weighted expected vanishing order of a multi-graded linear series will be stated in Section \ref{Section: Weighted Abban-Zhuang estmate}. We establish the weighted Abban-Zhuang estimate (Theorem \ref{Theorem: Intro weighted Abban-Zhuang}) at the end of the section, which is the main technical results of this paper. Furthermore, we generalize the $G$-equivariant K-stability of \cite{Zhu21} to the weighted setting in Section \ref{Section: G-Equivariant weighted K-stability}. Finally, we prove the main theorems in the last three sections. In Section \ref{Section: Applications} and \ref{Section: GIT and weighted K, 2.28 and 3.14}, we prove Theorem \ref{Theorem: Intro GIT and weighted K, 2.28 and 3.14}, the equivalence between the weighted K-stability of Fano threefolds in the families №2.28, №3.14 and the GIT-stability of plane cubic curves. In Section \ref{Section: Weighted K-stability of cones}, we prove Theorem \ref{Theorem: Intro stability of cones}, the equivalence between the K-stability of a log Fano pair $(V,\D)$ and the weighted K-stability of the log Fano cone $(Y, \D_Y, \xi_0)$ over it. 

\noindent {\bf Acknowledgments}. We would like to thank our advisor, Gang Tian, for his constant support and guidance. We thank Chenyang Xu and Yuchen Liu for their suggestions on the weighted K-stability and K-moduli. We thank Akito Futaki for telling us the example of $\IP^3$ with a conic blowup. We also thank Lu Qi, Fei Si, and Shengxuan Zhou for helpful discussions. The second author was partially supported by the NKRD Program of China (\#2023YFA1010600), (\#2020YFA0713200) and LNMS.

\section{Preliminaries}
\label{Section: Preliminaries}

\subsection{Notations and Conventions}
We work over the field of complex numbers $\IC$. A {\it variety} is a separated integral scheme of finite type over $\IC$. A {\it pair} $(X, \Delta)$ consists of a normal variety $X$ and an effective $\IQ$-divisor $\Delta$ on $X$ such that $K_X+\Delta$ is $\IQ$-Cartier. A pair $(X, \Delta)$ is called {\it log Fano} if it is klt and $-K_X-\Delta$ is ample.  

Let $(X,\D)$ be a $n$-dimensional log Fano pair. Fix an integer $l_0 > 0$ such that $-l_0(K_X+\D)$ is Cartier. We denote by 
$R\coloneqq \oplus_{m\in l_0\IN} R_m=:R(X,\D)$ the anti-canonical ring of $(X,\D)$ where $R_m \coloneqq H^0(X, -m(K_X+\D))$. The number $m$ is always assumed to be a multiple of $l_0$.

\subsection{Valuations}
\label{Subsection. Valuations}

Let $K$ be a field. An $\IR$-{\it valuation} $v$ on $K$ is a function $v: K^\times \to \IR$ such that $v(fg)=v(f) + v(g), v(f+g)\ge \min\{v(f), v(g)\}$ for all $f, g \in K^\times$. For convenience, we set $v(0)=+\infty$. The {\it trivial valuation} $v_{\triv}$ is defined as $v_{\triv}(f)=0$ for all $f\in K^\times$. 

Let $X$ be a normal variety. A {\it valuation} $v$ on $X$ is an $\IR$-valuation on the rational function field $K(X)$ with a center on $X$ and $v|_{\Ik^\times}=0$. Recall that the {\it center} of $v$, denoted by $c_X(v)$, is a scheme-theoretic point $\zeta$ on $X$ such that $v\ge 0$ on $\CO_{X,\zeta}$ and $v>0$ on the maximal ideal $\fm_{\zeta}\seq \CO_{X,\zeta}$. We denote by $C_X(v)=\overline{c_X(v)}\subseteq X$ the corresponding closed irreducible subscheme on $X$. If $X$ is proper, then every valuation $v$ has a unique center on $X$. 

Let $\pi:Y\to X$ be a birational morphism between normal varieties. Any prime divisor $E\seq Y$ determines a {\it divisorial valuation} $\ord_E$ on $X$ (we also say that $E$ is a divisor {\it over} $X$). The log discrepancy $A_{X,\D}(\ord_E)=A_{X,\D}(E) = \ord_E(K_Y - \pi^*(K_X+\D)) + 1$ is defined for such a valuation. 
One may define the {\it log discrepancy} $A_{X,\D}(v)$ for any  valuation $v$ on $X$, see \cite[Section 5.1]{JM12} and \cite[Definition 1.34]{Xu23}. 
We denote by $\Val_X^\circ$ the set of non-trivial valuations on $X$. If $(X, \D)$ admits a torus $\IT=\IG_m^r$ action, we denote by $\Val_X^{\IT}$ the set of $\IT$-invariant valuations on $X$, and $\Val_X^{\IT, \circ}\coloneqq \Val_X^\circ\cap \Val_X^\IT$. 

Let $(X,\D)$ be a pair. A divisor $E$ over $(X,\D)$ is said to be of {\it plt type} if there exists a projective birational morphism $\pi: Y\to X$ such that $Y$ is normal, $E\seq Y$ is a $\IQ$-Cartier $\pi$-antiample ($-E$ is $\pi$-ample) prime divisor, and $(Y,\pi_*^{-1}\D \vee E)$ is plt in a neighbourhood of $E$ (where $(\sum_i a_i D_i) \vee (\sum_i b_i D_i) = \sum_i \max\{a_i,b_i\}D_i$ for prime divisors $\{D_i\}$). If $(X,\D)$ is klt, then $\pi$ is called a {\it plt blowup} of $X$.

\subsection{Filtrations} \label{Subsection: filtration}
A {\it filtration} $\CF$ on $R$ is collection of subspaces $\CF^\lam R_m \subseteq R_m$ for each $\lam \in \IR$ and $m\ge 0$ such that
\begin{itemize}
\item {\it Decreasing.} $\CF^\lam R_m \supseteq \CF^{\lam'}R_m $ for  $\lam \le \lam'$; 
\item {\it Left-continuous.} $\CF^\lam R_m=\CF^{\lam-\epsilon}R_m$ for $0<\epsilon \ll 1$; 
\item {\it Bounded.} $\CF^\lam R_m = R_m$ for $\lam \ll 0$ and $\CF^\lam R_m = 0$ for $\lam \gg 0$; 
\item {\it Multiplicative.} $\CF^\lam R_m \cdot \CF^{\lam'}R_{m'} \subseteq \CF^{\lam+\lam'}R_{m+m'}$. 
\end{itemize}
Since $R$ is finitely generated and $\CF$ is bounded and multiplicable, there is a constant $C>0$ such that $\CF^{-mC}R_m=R_m$ for all $m$. A filtration $\CF$ is called {\it linearly bounded} if there is a constant $C>0$ such that $\CF^{mC}R_m=0$ for all $m$. 
For any valuation $v$ on $X$, there is a filtration $\CF_v$ on $R$ defined by
$$\CF_v^\lam R_m \coloneqq \{s\in R_m\mid v(s)\ge \lam\}. $$
If $A_{X,\D}(v)<+\infty$, then by the same argument of \cite[Lemma 3.1]{BJ20}, the induced filtration $\CF_v$ is linearly bounded. In particular, the trivial valuation induces the trivial filtration
$\CF_{\triv}^0 R_m = R_m,\,\,\CF_{\triv}^{>0}R_m= 0. $

Let $L$ be a line bundle on $X$ and $V\seq H^0(X, L)$ be a subspace of finite dimension. A filtration $\CF$ on $V$ is simply defined to be a decreasing sequence $\{\CF^\lam V\}$ of subspaces which is left-continuous in $\lam$ (in particular, it is given by finitely many subspaces of $V$). For any $s \in V\setminus \{0\}$, we define $\ord_\CF(s) \coloneqq \max\{\lam\mid s\in \CF^\lam R_m\}$. We also set $\ord_\CF(0)=+\infty$ for convenience. 
\begin{defi} \rm
Let $N=\dim V$. A basis $\{s_1, \cdots, s_{N}\} $ of $V$ is said to be {\it compatible} with $\CF$ if $\CF^\lam V$ is generated by those $s_j$ with $\ord_\CF(s_j) \ge \lam$ for any $\lam \in \IR$. 
\end{defi}
The lifting of any basis of $\gr_\CF V=\oplus_\lam \CF^\lam V / \CF^{>\lam}V$ to $V$ is a basis compatible with $\CF$. 
Moreover, for any two filtrations, there exists a basis compatible with both the two filtrations.
\begin{lem}\cite[Lemma 3.1]{AZ22} 
For any two filtrations $\CF, \CG$ on a finite-dimensional vector space $V$, there exists a basis of $V$ that is compatible with both $\CF$ and $\CG$. 
\end{lem}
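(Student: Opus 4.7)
The plan is to construct the basis by lifting representatives from the bi-graded pieces of $V$ determined jointly by $\CF$ and $\CG$. For each $(\lambda,\mu)\in\IR^2$, set
$$V_{\lambda,\mu} \coloneqq \CF^\lambda V \cap \CG^\mu V, \qquad \gr_{\lambda,\mu} V \coloneqq \frac{V_{\lambda,\mu}}{\CF^{>\lambda} V \cap \CG^\mu V \,+\, \CF^\lambda V \cap \CG^{>\mu} V}.$$
Left-continuity and finite-dimensionality imply that each filtration has only finitely many jumps, so only finitely many of the $\gr_{\lambda,\mu} V$ are nonzero.

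The first step is to establish the dimension identity $\dim V=\sum_{\lambda,\mu}\dim \gr_{\lambda,\mu} V$. The argument is iterated grading: the filtration $\CG$ on $V$ restricts to $\CF^\lambda V$ and descends to a filtration on the quotient $\gr_\CF^\lambda V=\CF^\lambda V/\CF^{>\lambda}V$, and a short diagram chase identifies $\gr_\mu^\CG(\gr_\CF^\lambda V)$ with $\gr_{\lambda,\mu} V$. Summing first over $\mu$ and then over $\lambda$ gives the identity.

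Next, I choose for each $(\lambda,\mu)$ a basis of $\gr_{\lambda,\mu} V$ and lift each element to some $s_{\lambda,\mu,k}\in V_{\lambda,\mu}$. The claim is that the collection $\{s_{\lambda,\mu,k}\}$ is a basis of $V$ compatible with both filtrations. Spanning follows by a descending induction on the product order: for each $(\lambda_0,\mu_0)$, any $v\in V_{\lambda_0,\mu_0}$ can be written as a linear combination of the lifts $s_{\lambda_0,\mu_0,k}$ plus an element of $\CF^{>\lambda_0} V \cap \CG^{\mu_0} V + \CF^{\lambda_0} V \cap \CG^{>\mu_0} V$, to which the inductive hypothesis applies; taking $(\lambda_0,\mu_0)\ll 0$ with $V_{\lambda_0,\mu_0}=V$ gives spanning of $V$, and linear independence follows from the dimension identity. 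For compatibility with $\CF$, running the same spanning argument inside $\CF^{\lambda_0} V$ (itself filtered by $\CG$) shows that $\CF^{\lambda_0} V$ is spanned exactly by the lifts $s_{\lambda,\mu,k}$ with $\lambda\ge \lambda_0$; compatibility with $\CG$ is symmetric.

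The step requiring the most care is confirming that each lift satisfies $\ord_\CF(s_{\lambda,\mu,k})=\lambda$ and $\ord_\CG(s_{\lambda,\mu,k})=\mu$ exactly. The inequalities $\ord_\CF(s_{\lambda,\mu,k})\ge \lambda$ and $\ord_\CG(s_{\lambda,\mu,k})\ge \mu$ are immediate from construction, and strict inequality in either would force the class of $s_{\lambda,\mu,k}$ in $\gr_{\lambda,\mu} V$ to be zero, contradicting that it was chosen as part of a basis. Beyond this, the main obstacle is the bookkeeping in the dimension identity and the inductive spanning step, both of which reduce to standard linear algebra once the two-parameter grading is set up.
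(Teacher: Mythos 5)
Your proof is correct, and since the paper simply cites \cite[Lemma 3.1]{AZ22} without reproducing an argument, the relevant comparison is with that source: your construction via the bi-filtered pieces $\CF^\lambda V\cap\CG^\mu V$, the identification $\gr^\CG_\mu(\gr^\CF_\lambda V)\cong\gr_{\lambda,\mu}V$, the dimension count, and the descending induction is essentially the same standard argument. The exactness check $\ord_\CF(s_{\lambda,\mu,k})=\lambda$, $\ord_\CG(s_{\lambda,\mu,k})=\mu$ is handled correctly (a strict inequality would kill the class in $\gr_{\lambda,\mu}V$), so nothing is missing.
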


\begin{defi} \rm
\label{Definition: basis type divisor and compatible} 
Let $V\seq H^0(X, L)$ be a subspace for some line bundle $L$ on $X$. A {\it basis type divisor} $D$ of $V$ is of the form $D=\sum_i\{s_i=0\}$ where $\{s_i\}$ is a basis of $V$. And $D$ is said to be {\it compatible} with a filtration $\CF$ on $V$ if the associated basis is. 
\end{defi}

By the above lemma, for any two filtrations $\CF, \CG$ on $V$, there always exists a basis type divisor $D$ of $V$ which is compatible with both $\CF$ and $\CG$.


\subsection{Torus actions and moment polytopes} \label{Subsection:torus action}
Let $(X, \D)$ be a log Fano pair of dimension $n$ with a $\IT$-action. We say that the $\IT$-action is of {\it complexity} $k$ if the $\IT$-orbit of a general point in $X$ is of codimension $k$. Assume that $(X,\D)$ admits a $\IT=\IG^r_m$-action ($r=n-k$). Then the $\IT$-action has a canonical lifting to $R=R(X,\D)$, and we have a weight decomposition $R_m=\oplus_{\alpha\in M}R_{m,\alpha}$ where $M=M(\IT)=\Hom(\IT,\IG_m)$ is the weight lattice of the $\IT$-action. Let 
\begin{eqnarray*}
\PGamma(R_\bu)
&\coloneqq& \{(m,\alpha)\in \IN\times M\mid R_{m,\alpha}\ne0\}, \\
\BP_m(R_\bu)
&\coloneqq& (\{m\}\times M)\cap\PGamma(R_\bu), \\
\BP(R_\bu)
&\coloneqq&(\{1\}\times M_\IR) \cap \Cone(\PGamma(R_\bu)) = \overline{\bigcup_{\it{m}} \frac{1}{\it{m}} \BP_{\it{m}}(R_\bu)}. 
\end{eqnarray*}
The closed convex body $\BP=\BP(R_\bu)\seq M_\IR$ is called the {\it moment polytope} of the $\IT$-action on $R=R_\bu$. We also denote by $N=N(\IT) = \Hom(\IG_m, \IT)$ the lattice of one-parameter subgroup (1-PS), or the coweight lattice of the $\IT$-action. Any $\xi \in N_\IR$ can be viewed as a holomorphic vector field on $(X, \D)$.

\subsection{The $\xi$-twist of a valuation} 
We shortly recall the $\xi$-twist of a valuation introduced by \cite[Section 2.4]{Li19}. This is a fundamental notion in the reduced uniform K-stability which is equivalent to the K-polystability by \cite{LXZ22}. 
Let $(X,\D)$ be a log Fano pair with a $\IT$-action. Then there exists a $\IT$-equivariant dominant birational map $\pi: X \dashrightarrow Z$, where $Z$ is the Chow quotient of $X$ and $\IT$ acts on $Z$ trivially. The function field $K(X)$ of $X$ is the fractional field of $K(Z)[M] = \oplus_{\alpha \in M} K(Z) \cdot 1^\alpha$. For any valuation $\mu$ on $Z$ and $\xi \in N_\IR$ we define the $\IT$-invariant valuation $v_{\mu, \xi}$ on $X$ such that 
$$v_{\mu, \xi}(f)
= \min_\alpha\{\mu(f_\alpha)+\la \alpha, \xi \ra\},$$
for any $f=\sum_\alpha f_\alpha \cdot 1^\alpha \in K(Z)[M]$. By \cite[Lemma 4.2]{BHJ17} we know that any $\IT$-invariant valuation on $X$ is obtained in this way, and we get a non-canonical isomorphism $\Val^\IT_X \cong \Val_Z \times N_\IR$. For any $v = v_{\mu, \xi_0}\in \Val^\IT_X$ and $\xi \in N_\IR$, we define the $\xi$-twist of $v$ by $v_\xi\coloneqq v_{\mu,\xi_0+\xi}$. One can check that the definition is not dependent on the choice of the birational map $X\dashrightarrow Z$. If $\mu$ is the trivial valuation, then we denote by $\wt_\xi = v_{\mu, \xi}$.

\subsection{Multi-graded linear series}
In this subsection, we recall the notion of multi-graded linear series
introduced by \cite[Section 4.3]{LM09}, see also \cite[Definition 2.9]{AZ22}. 
Fix integers $0\le l < n$. Let $X_l$ be a normal variety of dimension $n-l$, and $L,L_1, \cdots, L_l$ be a sequence of line bundles on $X_l$. A {\it $\IN\times \IN^l$-graded linear series} $W_\bu$ associated to those $L_i$ is a collection of finite dimensional subspaces
\begin{eqnarray*}
W_{m,\beta} 
\seq H^0(X_l, mL+\beta_1L_1+\cdots+\beta_lL_l), 
\end{eqnarray*} 
for $(m,\beta)=(m,\beta_1,\cdots,\beta_l)\in \IN\times \IN^l$ such that $W_0 = \IC$ and $W_{m,\beta}\cdot W_{m',\beta'} \seq W_{m+m',\beta+\beta'}$. For convenience, we denote by $W_{m,\bu}$ the collection of spaces $\{W_{m,\beta}\}_\beta$. We also use $W_{(1,\beta)}$ to denote the graded linear series $\{W_{m(1,\beta)}=W_{m,m\beta}\}_m$. 

We define the semigroup $\QGamma(W_\bu)=\{(\it{m},\beta)\in \IN\times\IN^l\colon W_{\it{m},\beta} \ne 0\}$ and the $m$-slicing $\BQ_m(W_\bu)=(\{m\}\times \IN^l)\cap\QGamma(W_\bu)$. The {\it base} of $W_\bu$ is defined by $\BQ(W_\bu)=(\{1\}\times \IR^l) \cap \Cone(\QGamma(W_\bu))$. We say that $W_\bu$ has {\it bounded support} if $\BQ(W_\bu)$ is bounded. 

The series $W_\bu$ {\it contains an ample series} if there exists $(1,\beta)\in\IQ\times\IQ^l$ in the interior of $\BQ=\BQ(W_\bu)$ such that $L+\beta_1L_1+\cdots+\beta_lL_l = A+E$, where $A$ is an ample $\IQ$-divisor and $E$ is an effective $\IQ$-divisor, and $H^0(X_l,mA)\seq W_{m(1,\beta)}$ for sufficiently divisible $m$. 


In the following, we will always assume that the multi-graded linear series $W_\bu$ has bounded support and contains an ample series. 

The volume of $W_\bu$ is defined as
$\vol(W_\bu)=\limsup_{m\to \infty}\frac{\dim(W_{m,\bu})}{m^n/n!}.$ We also define the fiberwise volume $\vol(W_{(1,\beta)})=\limsup_{m\to \infty}\frac{\dim(W_{m(1,\beta)})}{m^{n-l}/(n-l)!}$ for any $\beta\in\interior(\BQ)_\IQ$. A filtration $\CF$ on $W_\bu$ is a descending sequence of subspaces $\{\CF^\lam W_{m,\beta}\}_{\lam\in\IR}$, which is bounded, left-continuous and multiplicative (see Section \ref{Subsection: filtration}). We say that a filtration $\CF$ is linearly bounded if there exists $C>0$ such that $\CF^{-Cm}W_{m,\beta}=W_{m,\beta}$ and $\CF^{Cm}W_{m,\beta}=0$ for all $m,\beta$. 

\noindent {\bf Notation remark.} 
We usually use the Abban-Zhuang estimate successively to deduce a lower bound of the weighted stability threshold of a $n$-dimensional Fano variety $X$. In the first step, we will take refinement of the anti-canonical ring $R$ of $X$, and the dimension of the Okounkov body of the multi-graded linear series in each step does not change. So we should consider the $\IN\times\IN^l$-graded linear series on a $(n-l)$-dimensional variety $X_l$ as the $l$-th step in the program. 

Let $\pi:Y_l\to X_l$ be a proper birational morphism between normal varieties, then it naturally to view $W_\bu$ on $X$ as a multi-graded linear series on $Y$ via the isomorphism 
$$H^0(X_l, mL+\beta_1L_1+\cdots+\beta_lL_l)
\cong H^0(Y_l, \pi^*(mL+\beta_1L_1+\cdots+\beta_lL_l)). 
$$
Hence the multi-graded linear series is independent of the choice of birational models.

\subsection{Okounkov bodies}

Recall that an {\it admissible flag} $Y_\bu$ over $X_l$ is defined as a flag of subvarieties
$$Y_\bu: Y=Y_0 \supseteq Y_1 \supseteq \cdots \supseteq Y_{n-l} $$
on some projective birational model $Y\to X$, where $Y_i$ is an irreducible subvariety in $Y$ of codimension $i$ (in particular, $Y_{n-l}$ is a closed point since $\dim X_l=n-l$) that is smooth at $Y_{n-l}$. For any $\IQ$-divisor $L$ on $X$ that is Cartier at $Y_{n-l}$, one can define a faithful valuation $\nv = \nv_{Y_\bu}$ as \cite[Introduction]{LM09} (see also \cite[Definition 2.8]{AZ22}),  
$$\nv=\nv_{Y_\bu} : H^0(X,L)\setminus \{0\} \to \IN^l, s \mapsto (\nv_1(s),\cdots, \nv_{n-l}(s)). $$ 
For any $\IN\times\IN^l$-graded linear series $W_\bu$ on $X_l$ associated to line bundles $L,L_1,\cdots,L_l$, we define the semigroup
\begin{eqnarray*}
\Gamma(W_\bu)=\{(m,\beta,\nv(s))\in \IN\times\IN^l\times \IN^{n-l} \mid 0\ne s\in W_{m,\beta}\}, 
\end{eqnarray*} 
and the $m$-slicing $\BO_m(W_\bu) = \Gamma(W_\bu) \cap (\{m\} \times \IN^n)=\nv(W_{m,\bu}\setminus \{0\})$. We denote the natural projection by $q:\BO_m\to \BQ_m$. We know that $\# \Gamma_m(W_\bu) = \dim W_{m,\bu}$ (see, for example, \cite[Lemma 1.6]{Xu23}). The {\it Okounkov body} of $W_\bu$ is defined by $\BO(W_\bu)= (\{1\} \times \IR^n)\cap \Cone(\Gamma(W_\bu))$. In other word,
\begin{eqnarray*}
\BO(W_\bu)=\overline{\bigcup_{m\in \IN}\frac{1}{m}\BO_m(W_\bu)}. 
\end{eqnarray*}

Let $\CF$ be a linearly bounded filtration on $W_\bu$. Then for each $t \in \IR$ we have a $\IN\times\IN^l$-graded linear series $\CF^{(t)} W_\bu$ defined by $(\CF^{(t)} W)_{m,\beta}=\CF^{mt}W_{m,\beta}$. Note that $\CF^{(t)} W_\bu$ is linearly bounded and contains an ample series since $W_\bu$ does. We denote the Okounkov body of $\CF^{(t)} W_\bu$ by $\BO^{(t)}$, and let $\BO=\BO(W_\bu)$. Then $\BO^{(t)}\seq \BO$ is a descending collection of convex bodies. The linear boundedness of $\CF$ guarantees that $\BO^{(-C)}=\BO$ and $\BO^{(C)}=0$. 

The {\it concave transform} of $\CF$ is the function on $\BO$ defined by $G^\CF(y)=\sup\{t\mid y\in\BO^{(t)}\}$. Note that $G^\CF$ is concave and continuous. If $(\beta,\gamma) \in \BO_\IQ$, then 
$$G^\CF(\beta,\gamma)=\sup\left\{\frac{\ord_\CF(s)}{m} \mid  \frac{\nv(s)}{m} = \gamma, \ \exists \, m \in \IN,\,\exists\, s\in W_{m,m\beta} \right\}. $$

\subsection{Multi-graded linear series with a torus action} \label{Subsection: multi-graded linear series with a torus action}
Let $(X_l, \D_l)$ be a $(n-l)$-dimensional pair with a $\IT=\IG^r_m$-action, and $L,L_1,\cdots, L_l$ are $\IT$-linearized line bundles on $X_l$. Let
$V_\bu$ be a $\IT$-invarant $\IN\times\IN^l$-graded linear series with respect $L,L_1,\cdots,L_l$, that is, $V_{m,\beta}\seq H^0(X_l, mL+\beta_1L_1+\cdots+\beta_lL_l)$ is $\IT$-invariant. 
Hence each $V_{m,\beta}$ admits a weight decomposition $V_{m,\beta} = \oplus_{\alpha\in M} V_{m,\alpha,\beta}$, where the direct sum is taken over $\BP_{m,\beta}=\{\alpha\in M\mid V_{m,\alpha,\beta}\ne 0\}$. We also denote by 
\begin{eqnarray*}
\PGamma(V_\bu)
&\coloneqq& \{(m,\alpha)\in \IN\times M\mid V_{m,\alpha,\beta}\ne0, \exists \beta \}, \\
\BP_m(V_\bu)
&\coloneqq& (\{m\}\times M)\cap\PGamma(V_\bu), \\
\BP(V_\bu)
&\coloneqq&(\{1\}\times M_\IR) \cap \Cone(\PGamma(V_\bu)) = \overline{\bigcup_{\it{m}} \frac{1}{\it{m}} \BP_{\it{m}}(V_\bu)}. 
\end{eqnarray*}
We will write $V_{m,\bu,\beta}$ instead of $V_{m,\beta}$.  Hence $V_m=\oplus_{\beta\in \BQ_m} V_{m,\bu,\beta} =\oplus_{\beta\in \BQ_m,\alpha\in \BP_{m,\beta}} V_{m,\alpha,\beta}$. 
For $\beta \in \interior(\BQ)_\IQ$, the moment polytope of the graded linear series $V_{(1,\beta)}$ is $\BP_{(1,\beta)}=\overline{\cup_m\frac{1}{m}\BP_{m,m\beta}}\seq M_\IR$, which is a slicing of $\BP=\BP(V_\bu)$. 
We also set $\BQ_{m,\alpha}\coloneqq\{\beta\in\IN^l\mid V_{m,\alpha,\beta}\ne 0\}$ and $V_{m,\alpha,\bu}=\oplus_{\beta\in\BQ_{m,\alpha}} V_{m,\alpha,\beta}$. So 
$V_m=\oplus_{\alpha\in \BP_m}
V_{m,\alpha,\bu}$ is the weight decomposition of $V_\bu$. 

\begin{rmk}\rm
The $\IT$-action on $V_\bu$ is always assumed to be faithful. But the $\IT$-action on $X$ may have a non-trivial stabilizer at general points. See Section \ref{Section: Applications} for examples. 
We also remark that $\alpha$ and $\beta$ may not be independent, see Example \ref{Example: formula when Q=P}. 
\end{rmk}

\subsection{Refinements}
\label{Subsection: Refinements and toric filtrations}
Let $(X_l,\D_l)$ be a klt pair with $\IT$-action and $V_\bu$ be a $\IT$-invariant $\IN\times\IN^l$-graded linear series on $X_l$. For any $\IT$-invariant $\IZ$-filtration $\CF$ on $V_\bu$, we define the {\it refinement} of $V_\bu$ along $\CF$ by $W_\bu\coloneqq\gr_\CF V_\bu$, which is a $\IT$-invariant $\IN\times\IN^{l+1}$-graded linear series. More precisely, $W_{m,\bu,\beta, j}=\CF^jV_{m,\bu,\beta}/\CF^{j+1}V_{m,\bu,\beta}$. 
If moreover $\CF=\CF_E$ for some $\IT$-invariant plt-type divisor $E$ over $(X_l,\D_l)$, then $W_\bu$ is a multi-graded linear series on $E$ by \cite[Example 2.6]{AZ22}. 

Let $E$ be a toric divisor over $X_l$, that is, $\ord_E = \wt_\xi$ for some $\xi\in N$. 
Let $b = \inf\{t\in \IR\mid \la \alpha,\xi \ra + mt \ge 0, \forall \alpha \in \BP(V_\bu) \}$, which is a rational number since $\BP(V_\bu)$ is rational. 
In this case, we have $\ord_E(s_\alpha) = \wt_\xi(s_\alpha) = \la\alpha, \xi\ra + mb \ge 0$ for any $s_\alpha \in V_{m,\alpha,\bu}$. In particular, we have $\CF\coloneqq\CF_E = \CF_{\xi}(b)$. Let $\Phi:\PGamma(V_\bu) \to \IQ$ be the linear function $\Phi(m,\alpha) = \la \alpha,\xi \ra + mb$. Then for any $\beta\in{\rm int}(\BQ)_\IQ$ and sufficiently divisible $m$, we have 
$$\CF^j V_{m,\bu,\beta} = 
\bigoplus_{\alpha\in \BP_{m,\beta},\Phi(m,\alpha)\ge j} V_{m,\alpha,\beta}. 
$$
Hence 
$$W_{m,\bu,\beta,j} = \CF^j V_{m,\bu,\beta}/\CF^{j+1}V_{m,\bu,\beta} \cong \bigoplus_{\alpha\in \BP_{m,\beta},\Phi(m,\alpha) =j} V_{m,\alpha,\beta}. $$ 
For any $\alpha\in \BP_{m,\beta}$ with $\Phi(m,\alpha)= j$, we should denote by
$$W_{m,\alpha,\beta,j}\coloneqq V_{m,\alpha,\beta}/\CF^{j+1}V_{m,\alpha,\beta} \cong V_{m,\alpha,\beta}.  $$ 
Hence $W_{m,\bu,\beta, j}
=\oplus_{\alpha\in \BP_{m,\beta,j}(W_\bu)} W_{m,\alpha,\beta,j}$ where $\BP_{m,\beta,j}(W_\bu)=\{\alpha\in \BP_{m,\beta}\mid \Phi(\alpha)= j\}\seq \BP_{m,\beta}(V_\bu)$. On the other hand, let 
$\BQ_{m,\alpha}(W_\bu)
= \{(\beta,\Phi(m,\alpha))\in \IN^{l+1}\mid \beta \in \BQ_{m,\alpha}\}
= \BQ_{m,\alpha}(V_\bu)\times \{\Phi(m,\alpha)\}\seq \IN^{l+1}$. In conclusion 
\begin{eqnarray*} 
\BP(W_\bu)
&=&\overline{\bigcup_{m,\beta,j}\frac{1}{m} \BP_{m,\beta,j}(W_\bu)}
=\overline{\bigcup_{m,\beta}\frac{1}{m} \BP_{m,\beta}(V_\bu)}
=\BP(V_\bu), \\
\BQ(W_\bu)
&=&\overline{\bigcup_{m,\alpha}\frac{1}{m} \BQ_{m,\alpha}(W_\bu)}\to \BQ(V_\bu), \quad
(\beta, j) \mapsto \beta. 
\end{eqnarray*}
The moment polytope does not change and the base polytope admits a one-dimensional fibration over the origional base polytope, whose fiber over $\beta \in \BQ(V_\bu)$ is $\Phi(1, \overline{\cup_m\frac{1}{m}\BP_{m,m\beta}})$.

\begin{rmk} \rm
The anti-canonical ring $R$ of a $n$-dimensional log Fano pair $(X,\D)$ with a $\IT=\IG^r_m$-action admits a weight decomposition $R_m=\oplus_{\alpha}R_{m,\alpha}$, which should be viewed as a $\IN\times\IN^r$-graded linear series on a $(n-r)$-dimensional $\IT$-invariant subvariety $Z$ over $X$ on which the $\IT$-action is trivial. Such a subvariety $Z$ will be obtained by taking refinements by toric divisors successively. 
\end{rmk}

\section{Weighted K-stability}
\label{Section: Weighted K-stability}

In this section, we define the invariants about the {\it $g$-weighted K-stability}. The invariants are almost the same as those of the ordinary K-stability. The only difference is that we replace the measures in the integrals defining the invariants by the {\it $g$-weighted measures}. 

Let $(X, \D)$ be a log Fano pair with a $\IT$-action. Then there is a canonical lifting of the $\IT$-action on the anti-canonical ring $R_\bu=R(X,\D)$. We denote the $1$-PS lattice by $N=N(\IT)$ and the weight lattice by $M=M(\IT)$. Let $\BP\seq M_\IR$ be the moment polytope of the  $\IT$-action on $R_\bu$, and $\BO\seq\IR^n$ be the Okounkov body of $-(K_X+\D)$ which is compatible with the $\IT$-action (see for example \cite[Definition 2.21]{HL20}). Then there is a linear projection $p:\IR^n\to M_\IR$ mapping $\BO$ onto $\BP$. The $\IT$-action on $R_\bu$ gives the weight decomposition
$R_m=\oplus_{\alpha \in M\cap m\BP} R_{m,\alpha}$.

\subsection{Soliton candidates}
\label{Subsection: soliton candidate}
In many examples, the main reason that leads to a Fano manifold being K-unstable is that there exists a product test configuration destabilizing it. It's natural to ask how can we remove the defect of such a test configuration. 
The {\it modified Futaki invariant} $\Fut_g$ introduced by \cite{TZ02} is the natural candidate to give an affirmative answer to this question. We state an algebraic definition of $\Fut_g$ here. 

Let $g: \BP\to \IR$ be a continuous function. 
We define
$$\Fut_g(\xi)=-\frac{1}{\Bv^g}
\int_\BP \la\alpha, \xi\ra g(\alpha) \DH_\BP(\dif \alpha), \,\,\xi\in N_\IR$$
where $\Bv^g=\int_\BP g(\alpha) \DH_\BP(\dif \alpha)$ is the {\it $g$-weighted volume} of $-(K_X+\D)$ and $\DH_\BP$ is the Duistermaat-Heckman measure of $-(K_X+\D)$ on $\BP$ (see Section \ref{Subsection: Sg via Okounkov body}).   
Fix a $\xi_0\in N_\IR$, and let $g(\alpha)=e^{-\la \alpha, \xi_0 \ra}$ for any $\alpha\in\BP$, then $\Fut_g$ reveals the modified Futaki invariant. 
In this case, $\Fut_g$ is the first order derivative of the so-called $\BH$-functional at $\xi=\xi_0$ (see for example \cite[Section 2.5]{HL20}), 
$$\BH(\xi)= \log\Big(
\int_\BP e^{-\la\alpha,\xi\ra} \DH_\BP(\dif \alpha)\Big), \,\,\xi\in N_\IR. $$
Note that $\BH: N_\IR \to \IR$ is a strictly convex function on $N_\IR$ and tends to $+\infty$ when $|\xi|\to +\infty$. Hence there exists a unique $\xi_0\in \IN_\IR$ minimizing $\BH$. For this $\xi_0$, we have $\Fut_g(\xi)=0$ for all $\xi\in N_\IR$. The vector field $\xi_0$ is called the {\it soliton candidate} of $(X,\D)$ with respect to the $\IT$-action, and we simply say that $(X,\D,\xi_0)$ is a {\it log Fano triple}. 
If $(X,\D)$ is K-semistable, then $\xi_0=0$ and $\Fut_g$ is the ordinary Futaki invariant. 

\begin{rmk} \rm 
\label{Remark. choice of g}
Through out this paper, the function $g:\BP\to \IR_{>0}$ is assumed to be continuous and $\Fut_g|_N = 0$. The function $g$ is called the {\it weight function}, for which we can define the {\it $g$-weighted K-stability} (see Definition \ref{Definition: T-equivariantly weighted K-semistable}). Whenever considering a log Fano triple $(X,\D,\xi_0)$, we always assume that $g(\alpha) = e^{-\la\alpha,\xi_0\ra}$. 
\end{rmk}

\subsection{Weighted expected vanishing order via DH measures}
Let $\CF$ be a $\IT$-invariant linearly bounded filtration on $R_\bu$. We simply denote by 
\begin{eqnarray*} 
N_m &=&
\dim R_m, \\
N_{m,\alpha} &=& 
\dim R_{m,\alpha}, \\
N^\lam_m &=& 
\dim \CF^\lam R_m - \dim \CF^{>\lam} R_m \\
N^\lam_{m,\alpha} &=&
\dim \CF^\lam R_{m,\alpha} - \dim \CF^{>\lam} R_{m,\alpha}. 
\end{eqnarray*} 
Then we define the asymptotic invariant (see \cite[Section 6]{RTZ21})
\begin{eqnarray*} 
S^g_m(R_\bu; \CF) 
&\coloneqq& 
\frac{1}{N^g_m} 
\sum_{\lam\in\IR} 
\frac{\lam}{m}
\sum_{\alpha\in \BP_m} 
g(\frac{\alpha}{m}) 
N^\lam_{m,\alpha},   
\end{eqnarray*}
where $N^g_m 
\coloneqq \sum_{\alpha\in \BP_m} 
g(\frac{\alpha}{m}) 
N_{m,\alpha} =: m^n \Bv^g_m$. One may choose a basis $\{s_j\}_{j=1}^{N_m}$ of $R_m$ which is compatible with the weight decomposition and the filtration $\CF$ (exists since $\CF$ is $\IT$-invariant) with $\lam_j=\ord_\CF(s_j)$ such that 
$\lam_1\ge \lam_2\ge \cdots \ge \lam_{N_m}.$
The collection $\{\lam_j\}$ is called the {\it ($m$-th) successive minima} of $\CF$. 
We also set $\alpha_j=\alpha$ if $s_j\in R_{m,\alpha}$. Note that $\{\alpha_j\}\seq M, \{\lam_j\}\seq \IR$ are both independent of the compatible basis. Then 
\begin{eqnarray*} 
S^g_m(R_\bu; \CF) 
&=& 
\frac{1}{N^g_m} 
\sum_{j} 
\frac{\lam_j}{m}\cdot
g(\frac{\alpha_j}{m}). 
\end{eqnarray*}
Consider the following discrete measure on $\IR$ determined by $\CF$
\begin{eqnarray*} 
\DH^g_{\CF,m}
&=& 
\frac{1}{m^n} 
\sum_{\lam\in\IR} 
\sum_{\alpha\in \BP_m} 
g(\frac{\alpha}{m}) 
N^\lam_{m,\alpha} \cdot
\delta_{\frac{\lam}{m}}.  
\end{eqnarray*} 
Hence we have $\Bv^g_m=\int_\IR \DH^g_{\CF,m}$ and 
\begin{eqnarray*} 
S^g_m(R_\bu; \CF) 
&=& 
\frac{1}{\Bv^g_m} \int_\IR t \cdot \DH^g_{\CF, m}(\dif t). 
\end{eqnarray*}
We define the $g$-volume of graded linear series $\{\CF^{(t)}R_{\bu}\}$ as 
\begin{eqnarray*}
    \vol^g(\CF^{(t)}R_{\bu}) = \mathop{\limsup}_{m\rightarrow \infty} \sum_{\alpha \in \BP_m} g(\frac{\alpha}{m})\frac{\dim \CF^{mt}R_{m,\alpha}}{m^n/n!}.
\end{eqnarray*}
In fact the limit exists and we define the Duistermaat-Heckman ($\DH$) measure 
\begin{eqnarray*}
    \DH_{\CF}^g(\dif t)= -\frac{1}{n!} \dif\,\vol^g(\CF^{(t)}R_{\bu})
\end{eqnarray*}
as a derivative distribution. The following lemma is mentioned in \cite[Section 2.6]{HL20} (for the $g=1$ case, see \cite[Theorem 1.11]{BC11} and \cite[Theorem 2.8]{BJ20}).
\begin{lem}\label{lemma:DHgWC}
    Suppose $\CF$ is a linearly bounded filtration of $R_{\bu}$, then the sequences of measures $\DH_{\CF,m}^g$ converges weakly to $\DH_{\CF}^g$ as $m\rightarrow \infty$.
\end{lem}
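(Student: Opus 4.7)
The plan is to lift the convergence to an Okounkov-body-level equidistribution statement and then integrate a continuous test function. Fix an admissible flag $Y_\bu$ compatible with the $\IT$-action, giving an Okounkov body $\BO\subseteq\IR^n$ together with a linear projection $p:\BO\to\BP$. For each $m$, since $\CF$ is $\IT$-invariant it preserves each weight piece $R_{m,\alpha}$, so I can choose a basis $\{s_j^{(m)}\}_{j=1}^{N_m}$ of $R_m$ simultaneously adapted to the weight decomposition (so $s_j^{(m)}\in R_{m,\alpha_j^{(m)}}$) and compatible with $\CF$ (so $\lambda_j^{(m)}=\ord_\CF(s_j^{(m)})$ is well defined). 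Rewriting the discrete measure in this basis gives
$$\DH^g_{\CF,m}=\frac{1}{m^n}\sum_{j=1}^{N_m}g(\alpha_j^{(m)}/m)\,\delta_{\lambda_j^{(m)}/m}.$$
Linear boundedness of $\CF$ confines the supports of every $\DH^g_{\CF,m}$ and of $\DH^g_\CF$ to a fixed interval $[-C,C]$, so it suffices to test weak convergence against $f\in C([-C,C])$.

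The core step is the following \emph{joint equidistribution}: as $m\to\infty$,
$$\frac{1}{m^n}\sum_{j=1}^{N_m}\delta_{(\alpha_j^{(m)}/m,\,\lambda_j^{(m)}/m)}\;\rightharpoonup\;\Phi_{*}\!\bigl(\mathbbm{1}_\BO\cdot dy\bigr),$$
where $\Phi:\BO\to M_\IR\times\IR$ is $y\mapsto(p(y),G^\CF(y))$ and $dy$ is Lebesgue measure on $\IR^n$. This combines two ingredients that are compatible under the basis chosen above: the BC11/BJ20 convergence of $\frac{1}{m^n}\sum_j\delta_{\lambda_j^{(m)}/m}$ to $(G^\CF)_*(\mathbbm{1}_\BO\,dy)$ encoded via the concave transform, and the moment-polytope equidistribution of the weights, namely that $\alpha_j^{(m)}/m - p(\nv(s_j^{(m)})/m)=o(1)$ as $m\to\infty$, which follows from the $\IT$-equivariance of the flag. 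Granting this, I integrate the bounded continuous test function $(\alpha,\lambda)\mapsto g(\alpha)f(\lambda)$ against both sides to obtain
$$\int f\,d\DH^g_{\CF,m}\;\longrightarrow\;\int_\BO g(p(y))\,f(G^\CF(y))\,dy\;=\;\int f\,d\DH^g_\CF,$$
which is precisely the claimed weak convergence.

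The main obstacle is Step 2: the two marginal equidistributions are classical, but to couple them one needs the simultaneously compatible basis and a careful check that the concave transform $G^\CF$ computed on the full ring agrees with the one controlling the filtration values on each weight slice. If this direct joint argument runs into technical difficulties, I would fall back on the following approximation route. By compactness of $\BP$ and continuity of $g$, for any $\epsilon>0$ write $g=g_\epsilon+r_\epsilon$ with $g_\epsilon=\sum_i c_i\mathbbm{1}_{Q_i}$ a step function on a rational polyhedral decomposition $\BP=\sqcup_i Q_i$ and $\|r_\epsilon\|_\infty<\epsilon$. For each rational sub-polytope $Q$, apply the unweighted BC11/BJ20 theorem to the $\IT$-stable sub-series $R^Q_m:=\bigoplus_{\alpha\in mQ\cap M}R_{m,\alpha}$, whose Okounkov semigroup is $\Gamma(R_\bu)\cap p^{-1}(Q)$ and is finitely generated; this yields convergence of the piecewise measures to $(G^\CF)_*(\mathbbm{1}_{p^{-1}(Q)}\,dy)$. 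Summing with weights $c_i$ and using uniform boundedness of $N_m/m^n$ to control the $r_\epsilon$-error, letting $\epsilon\to 0$ recovers the same conclusion.
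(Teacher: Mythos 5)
Your route is genuinely different from the paper's proof, which works with the distribution functions $f_{m,g}(t)=\frac{1}{m^n}\sum_{\alpha}g(\frac{\alpha}{m})\dim\CF^{mt}R_{m,\alpha}$: there, pointwise convergence to $(n!)^{-1}\vol^g(\CF^{(t)}R_\bu)$ is read off from the existence of the weighted volume, dominated convergence upgrades it to $L^1_{\loc}$, and weak convergence of $\DH^g_{\CF,m}=-\frac{\dif}{\dif t}f_{m,g}$ follows by differentiating in the sense of distributions. However, as written your core Step~2 has a genuine gap: the joint equidistribution $\frac{1}{m^n}\sum_j\delta_{(\alpha_j/m,\lambda_j/m)}\rightharpoonup\Phi_*(\mathbbm{1}_\BO\,\dif y)$ does \emph{not} follow from the two ingredients you cite. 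The marginal convergence of $\frac{1}{m^n}\sum_j\delta_{\lambda_j/m}$ to $(G^\CF)_*\LE$ together with the identity $\alpha_j/m=p(\nv(s_j)/m)$ says nothing about how $\lambda_j$ and $\alpha_j$ are correlated along your basis; two sequences with convergent marginals can have arbitrary joint behavior, and this correlation is exactly where the weight $g$ enters (testing against $g(\alpha)f(\lambda)$ is not a sum of functions of one variable). The missing ingredient is a concentration-on-the-graph estimate: for a basis simultaneously compatible with $\CF$, the weight decomposition and the flag, one has the one-sided bound $\lambda_j/m\le G^\CF(\nv(s_j)/m)$ (noted in the paper), and since both $\frac{1}{m^n}\sum_j\lambda_j/m$ and $\frac{1}{m^n}\sum_jG^\CF(\nv(s_j)/m)$ converge to $\int_\BO G^\CF\,\LE$ (the first by the unweighted BC11/BJ20 statement, the second by equidistribution of the Okounkov points and a.e.-continuity of the bounded function $G^\CF$), Markov's inequality shows that $G^\CF(\nv(s_j)/m)-\lambda_j/m>\epsilon$ for at most $o(m^n)$ indices $j$. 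Only with this does your pushforward computation go through; the ``careful check on weight slices'' you flag is not the real obstruction, the coupling estimate is.

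Your fallback is closer in spirit to how the input the paper takes for granted (existence of the limit defining $\vol^g(\CF^{(t)}R_\bu)$, hence the pointwise convergence of $f_{m,g}$) is actually established, but it also leaves a point unverified: for a rational polytope $Q\seq\BP$ the truncated series $R^Q_\bu$ is indeed multiplicatively closed, yet invoking the unweighted BC11/BJ20 theorem for it requires checking the Okounkov-body hypotheses (bounded support and containing an ample series, or the Khovanskii-type conditions on the semigroup $\Gamma(R_\bu)\cap p^{-1}(\Cone(Q))$); your claim that this semigroup ``is finitely generated'' is neither needed nor justified, and pieces $Q_i$ meeting the boundary of $\BP$ need separate (though harmless, by the uniform bound $N_m/m^n=O(1)$) treatment. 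With the Markov-type coupling argument inserted in Step~2, or with these hypotheses verified in the fallback, your argument closes and in fact yields a slightly stronger joint statement than the lemma itself.
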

\begin{proof}
    Let $f_{m,g}(t) \coloneqq \frac{1}{m^n}\sum_{\alpha \in \BP_m}g(\frac{\alpha}{m})\dim \CF^{mt}R_{m,\alpha}$ and $f_g(t) \coloneqq (n!)^{-1}\vol^g(\CF^{(t)}R_{\bu})$. According to the definition of $g$-volume of a graded linear series, it follows that $f_{m,g}(t)$ converges to $f_g(t)$ pointwise. Note that the moment polytope $\BP=\BP(R_{\bu})$ is compact and the weight function $g\colon \BP \rightarrow \IR_{>0}$ is continuous, then there exists a constant $C = C(g, \BP)>0$ such that $g(x)\leq C$ for any $x \in \BP$. Therefore, $|f_{m,g}(t)|\leq \frac{C}{m^n}N_m$ are uniformly bounded. By the dominated convergence theorem, we obtain that $f_{m,g}(t)$ converges to $f_g(t)$ in $L^{1}_{\loc}(\IR)$. 

    Observe that $\DH_{\CF,m}^g(\dif t) = -\frac{\dif}{\dif t} f_{m,g}(t)$ and $\DH_{\CF}^g(\dif t) = -\frac{\dif}{\dif t}f_g(t)$ as distributions. Then $L^{1}_{\loc}(\IR)$ convergence of $f_{m,g}(t)$ implies $\DH_{\CF,m}^g$ converges to $\DH_{\CF}^g$ as distributions. By \cite[Theorem 2.1.9]{Hor03}, we conclude $\DH_{\CF,m}^g$ converges to $\DH_{\CF}^g$ weakly as measures.
\end{proof}
Let $\Bv^g \coloneqq \int_\IR \DH^g_\CF$. We define the {\it $g$-weighted expected vanishing order} of $\CF$ by 
\begin{eqnarray*} 
S^g(R_\bu; \CF) 
&\coloneqq& 
\frac{1}{\Bv^g} \int_\IR t \cdot \DH^g_\CF(\dif t). 
\end{eqnarray*} 
We also denote it by $S^g(R;E)$ or $S^g(R;v)$ if $\CF$ is induced by a prime divisor $E$ or a valuation $v$ respectively. 
It's clearly that $\Bv^g_m$ and $S^g_m(R_\bu; \CF)$ converge to $\Bv^g, S^g(R_\bu; \CF)$ respectively. 

\begin{rmk}\rm 
In the literature (for example, \cite[Section 2.1.4]{BLXZ23}), the measure $\DH_\CF$ is always defined to be a probability measure. However, by our definition, $\DH^g_\CF$ is not a probability measure. It has total mass $\Bv^g = \int_{\IR}\DH^g_\CF$. If $g=1$, then $\Bv^g=\Bv=\vol(\BO(L))$ is the volume of the Okounkov body of $L=-(K_X+\D)$.  
\end{rmk}

\subsection{Weighted expected vanishing order via Okounkov bodies}
\label{Subsection: Sg via Okounkov body}
We define the following discrete measures on the Okounkov body $\BO$ and the moment polytope $\BP$ respectively 
\begin{eqnarray*} 
\LE_m
&=&\frac{1}{m^n}\sum_{\gamma\in \BO_m}
\delta_\frac{\gamma}{m}, \\
\DH_{\BP,m}
&=&\frac{1}{m^n}\sum_{\alpha\in \BP_m}  
N_{m,\alpha} \cdot
\delta_\frac{\alpha}{m}.
\end{eqnarray*}
We see that $\DH_{\BP,m}=p_*\LE_m$. By \cite[Lemma 1.4]{Xu23}, the discrete measure $\LE_m$ converges weakly to the Lebesgue measure $\LE$ on $\BO$. Pushing forward by $p:\BO\to \BP$, we see that $\DH^g_{\BP,m} \to \DH^g_\BP$ also converges weakly. We define the following {\it $g$-weighted measures} on $\BO$ and $\BP$  
\begin{eqnarray}
\LE^g=g\circ p\cdot \LE,\quad 
\DH^g_\BP=g\cdot \DH_\BP, 
\end{eqnarray}
respectively. Then $\DH^g_\BP=p_*\LE^g$. We also define $\LE^g_m=g\circ p\cdot \LE_m$ and $\DH^g_{\BP,m}=g\cdot \DH_{\BP,m}$. 
Then the same argument as above shows that they converge weakly to $\LE^g$ and $\DH^g_\BP$ respectively. Let $\BO' \seq \BO$ be a convex body. The {\it $g$-weighted volume} of $\BO'$ is $\vol^g(\BO')\coloneqq\int_{\BO'}\LE^g$. Clearly we have $\int_\BO \LE^g_m = N^g_m/m^n=\Bv^g_m$, and the $g$-weighted volume of $\BO$ is just 
\begin{eqnarray*}
\Bv^g=\vol^g(\BO)
=\int_{\BO}\LE^g
=\int_{\BP}\DH^g_\BP. 
\end{eqnarray*}

For any $\IT$-invarant linearly bounded filtration $\CF$ on $R=R_\bu$, let $G^\CF$ be the concave transform of the filtration $\CF$. Note that $\BO(\CF^{(t)}R_{\bu}) = \{x\in \BO(R_{\bu})\colon G^{\CF}(x)\geq t\}$. Then we have 
\begin{eqnarray*} 
\DH^g_\CF(\dif t) 
= -\frac{1}{n!}\cdot\dif\, \vol^g(\CF^{(t)}R_{\bu})=
-\dif \, \vol^g\{G^\CF \ge t\} 
\end{eqnarray*}
as distributions.

Integrating by part, we get 
$
S^g(R_\bu; \CF) = 
\frac{1}{\Bv^g}
\int_\IR 
\vol^g\{G^\CF\ge t\}\dif t 
$. Hence 
\begin{eqnarray*} 
S^g(R_\bu; \CF) 
&=& 
\frac{1}{\Bv^g} \int_\BO G^\CF \cdot \LE^g. 
\end{eqnarray*} 
If $\CF=\CF_E$ is induced by a toric divisor $E$ (see Section \ref{Subsection: Refinements and toric filtrations}), then $G^\CF$ is invariant on each fiber of $p$, hence descends to a function on $\BP$ (still denoted by $G^\CF$). Hence 
\begin{eqnarray*} 
S^g(R_\bu; \CF) 
&=& 
\frac{1}{\Bv^g} 
\int_\BP G^\CF 
\cdot \DH^g_{\BP}. 
\end{eqnarray*} 
With this formualtion of $S^g$, we may also define the asymptotic invariants $\hat{S}^g_m$ analogous to $S^g_m$ by 
\begin{eqnarray*} 
\hat{S}^g_m(R_\bu; \CF) 
\,\,\coloneqq\,\,
\frac{1}{\Bv^g_m} \int_\BO G^\CF \cdot\LE^g_m
\,\,=\,\, 
\frac{1}{N^g_m} \sum_{j} 
G^\CF(\frac{\nv(s_j)}{m}) \cdot
g(\frac{\alpha_j}{m}). 
\end{eqnarray*}
By definition of $G^\CF$, we have $\frac{\ord_\CF(s_j)}{m} \le G^\CF(\frac{\nv(s_j)}{m})$. Hence $S^g_m(R_\bu; \CF)\le \hat{S}^g_m(R_\bu; \CF)$. 

We have the following analog of \cite[Corollary 2.10]{BJ20}, see also \cite{BLXZ23}. For the convenience of the reader, we state a proof in the appendix. 

\begin{lem} \label{Lemma: uniform bound of S^g_m/S^g}
For any $\varepsilon>0$ there exists $m_0\in\IN$ such that $S^g_m(R_\bu; \CF) \le (1+\varepsilon)S^g(R_\bu; \CF)$ for any $m\ge m_0$ and any linearly bounded filtration $\CF$ on $R_\bu$. 
\end{lem}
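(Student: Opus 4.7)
The plan is to dominate $S^g_m$ by $\hat{S}^g_m$, interpret the latter as integration of the concave transform $G^\CF$ against discrete measures that converge weakly to $\LE^g$, and then upgrade this convergence to be uniform in $\CF$ via a compactness argument for concave functions.

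First, from the pointwise inequality $\ord_\CF(s_j)/m \le G^\CF(\nv(s_j)/m)$ (noted in the paragraph just before the lemma), we have $S^g_m(R_\bu;\CF) \le \hat{S}^g_m(R_\bu;\CF) = \tfrac{1}{\Bv^g_m}\int_\BO G^\CF \cdot \LE^g_m$. Since $S^g(R_\bu;\CF) = \tfrac{1}{\Bv^g}\int_\BO G^\CF \cdot \LE^g$ and $\Bv^g_m \to \Bv^g$, it suffices to compare the two integrals uniformly over a suitable normalized class of filtrations.

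Next, I normalize. Since both $S^g_m(a\CF+b) = aS^g_m(\CF)+b$ and $S^g(a\CF+b) = aS^g(\CF)+b$ for $a>0$ and $b\in\IR$, we may (modulo the trivial case) rescale and shift so that $G^\CF:\BO\to[0,1]$ with $\sup G^\CF = 1$ and $\inf G^\CF = 0$. Under this normalization, concavity yields: whenever $G^\CF(x_0)=1$, the bound $G^\CF((1-t)x_0+ty)\ge 1-t$ for all $y\in\BO$, $t\in[0,1]$; equivalently, $\vol(\{G^\CF \ge u\}) \ge (1-u)^n\vol(\BO)$ for $u\in[0,1]$. Integrating, $\int_\BO G^\CF\cdot \LE \ge \vol(\BO)/(n+1)$, and combining with the uniform bounds $c_g \le g \le C_g$ on the compact polytope $\BP$ produces a uniform lower bound $S^g(R_\bu;\CF) \ge c_0 > 0$ depending only on $n,\BO,g$. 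An absolute error bound $\hat{S}^g_m - S^g \le \varepsilon'$ on the normalized class then translates into the multiplicative bound $S^g_m \le (1+\varepsilon'/c_0) S^g$, which is the desired estimate after choosing $\varepsilon'=\varepsilon c_0$.

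For the uniform absolute convergence, the class $\mathcal{K}$ of concave $[0,1]$-valued functions on $\BO$ is equi-Lipschitz on any compact $K \subset \interior(\BO)$, with Lipschitz constant depending only on $K$ and $\BO$, so Arzel\`a--Ascoli yields a finite $\eta$-net in $C(K)$. The measures $\LE^g_m = (g\circ p)\cdot\LE_m$ converge weakly to $\LE^g = (g\circ p)\cdot\LE$ on $\BO$, since $\LE_m\to\LE$ weakly and $g\circ p$ is continuous and bounded on $\BO$. A triangle inequality over the finite $\eta$-net yields uniform convergence of $\int_K G^\CF\cdot\LE^g_m$ to $\int_K G^\CF\cdot\LE^g$, while the boundary strip $\BO\setminus K$ contributes at most $\LE^g_m(\BO\setminus K) + \LE^g(\BO\setminus K)$ (using $G^\CF \le 1$), which becomes arbitrarily small as $K$ exhausts $\BO$. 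The main obstacle is precisely this boundary behavior: a concave function may fail to be continuous on $\partial \BO$, so weak convergence of $\LE^g_m$ does not directly apply to $G^\CF$; we circumvent this by exploiting $G^\CF \le 1$ to reduce the boundary contribution to a mass estimate, controlled by a portmanteau argument on subsets with $\LE^g$-negligible boundary.
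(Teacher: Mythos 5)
Your overall architecture is the one the paper intends (it proves the weighted analogue of \cite[Lemma 2.2]{BJ20} in the appendix and then invokes the argument of \cite[Corollary 2.10]{BJ20}): dominate $S^g_m$ by $\hat S^g_m$, reduce to concave transforms with values in $[0,1]$, prove a uniform estimate $\int_\BO G\cdot\LE^g_m\le\int_\BO G\cdot\LE^g+\varepsilon'$ over that normalized class, and convert the additive error into a multiplicative one using a uniform positive lower bound on $S^g$ for normalized $G$. Your proof of the uniform estimate (equi-Lipschitz bounds for concave functions on compact subsets of $\interior(\BO)$, an Arzel\`a--Ascoli net, and a portmanteau/mass argument for the boundary strip, using $0\le G\le1$) is a legitimate alternative to the paper's explicit cube-by-cube comparison, and that part can be made rigorous.

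The genuine gap is the normalization step ``we may rescale and shift.'' The target inequality $S^g_m\le(1+\varepsilon)S^g$ is invariant under rescaling $\CF\mapsto a\CF$, $a>0$, but \emph{not} under shifts: if $\CF'=a\CF+b$ is your normalized filtration, with $b/a=-\inf_\BO G^\CF$, then the inequality for $\CF'$ only yields $S^g_m(R_\bu;\CF)\le(1+\varepsilon)S^g(R_\bu;\CF)-\varepsilon\cdot\inf_\BO G^\CF$, which is the desired bound only when $\inf_\BO G^\CF\ge0$. For a general linearly bounded filtration this breaks down, and no argument can repair it, because the statement itself fails in that generality: the filtration $\CF^\lam R_m=R_m$ for $\lam\le-m$ and $\CF^\lam R_m=0$ for $\lam>-m$ is multiplicative, linearly bounded and $\IT$-invariant, with $S^g_m(R_\bu;\CF)=S^g(R_\bu;\CF)=-1$, contradicting $S^g_m\le(1+\varepsilon)S^g$. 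The lemma is really about filtrations with nonnegative concave transform (e.g.\ those induced by valuations, which is how it is used throughout the paper); under that restriction your shift is downward, $b\le0$, the extra term $-\varepsilon\inf_\BO G^\CF$ is $\le0$, and your reduction does go through --- but this sign condition must be stated and used, whereas your blanket claim that one may shift is false. Note that the route of \cite{BJ20} (and the paper) avoids the shift altogether: one rescales by $\sup G^\CF$ only and uses the concavity bound $\sup G^\CF\le C_n\, S^g$, which again presupposes $G^\CF\ge0$; your uniform lower bound $S^g\ge c_0$ plays exactly this role, so the fix is only to replace ``rescale and shift'' by ``rescale, after restricting to $G^\CF\ge0$.''
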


\subsection{Weighted stability thresholds}
\label{Subsection. Weighted stability thresholds}
Following \cite{RTZ21, BLXZ23}, we define 
\begin{eqnarray*} 
\delta^g_{\IT,m}(X,\D)
\,\,\coloneqq\,\, 
\inf_v \frac{A_{X,\D}(v)}{S^g_m(R_\bu;v)}, \quad
\delta^g_\IT(X,\D)
\,\,\coloneqq\,\, 
\inf_v \frac{A_{X,\D}(v)}{S^g(R_\bu;v)},
\end{eqnarray*}
where the infimum runs over all the valuations $v\in \Val^{\IT,\circ}_X$. Using the same argument of \cite[Theorem 4.4]{BJ20}, we have 
\begin{lem}\cite[Proposition 6.14]{RTZ21}, \cite[Lemma 4.5]{BLXZ23} \label{Lemma: convergence of delta_m}
\begin{eqnarray*}
    \mathop{\lim}_{m\to \infty}\delta^g_{\IT,m}(X,\D)
= \delta^g_\IT(X,\D).
\end{eqnarray*}
\end{lem}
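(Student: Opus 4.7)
The plan is to establish the two inequalities $\liminf_{m\to\infty} \delta^g_{\IT,m}(X,\D) \ge \delta^g_\IT(X,\D)$ and $\limsup_{m\to\infty} \delta^g_{\IT,m}(X,\D) \le \delta^g_\IT(X,\D)$ separately, following the pattern of \cite[Theorem 4.4]{BJ20} in the unweighted case but with the weighted comparison of Lemma \ref{Lemma: uniform bound of S^g_m/S^g} replacing its classical analog.

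For the lower bound on $\liminf$, I would simply apply Lemma \ref{Lemma: uniform bound of S^g_m/S^g} directly. Given $\varepsilon>0$, there exists $m_0$ such that for all $m\ge m_0$ and every linearly bounded filtration $\CF$ on $R_\bu$ one has $S^g_m(R_\bu;\CF)\le(1+\varepsilon)S^g(R_\bu;\CF)$. Applied to $\CF=\CF_v$ for any $v\in\Val^{\IT,\circ}_X$ with $A_{X,\D}(v)<\infty$, this gives
$$\frac{A_{X,\D}(v)}{S^g_m(R_\bu;v)}\ge\frac{1}{1+\varepsilon}\cdot\frac{A_{X,\D}(v)}{S^g(R_\bu;v)}\ge\frac{\delta^g_\IT(X,\D)}{1+\varepsilon}.$$
Taking the infimum over $v$ and then letting first $m\to\infty$ and afterwards $\varepsilon\to 0$ yields $\liminf_{m\to\infty}\delta^g_{\IT,m}(X,\D)\ge\delta^g_\IT(X,\D)$.

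For the reverse inequality, the plan is to fix, for each $\varepsilon>0$, a valuation $v=v_\varepsilon\in\Val^{\IT,\circ}_X$ with $A_{X,\D}(v)/S^g(R_\bu;v)\le\delta^g_\IT(X,\D)+\varepsilon$, and to argue that $S^g_m(R_\bu;v)\to S^g(R_\bu;v)$ as $m\to\infty$ for this fixed $v$. From the measure-theoretic formula $S^g_m(R_\bu;v)=(\Bv^g_m)^{-1}\int_\IR t\,\DH^g_{\CF_v,m}(\dif t)$ together with Lemma \ref{lemma:DHgWC}, which gives $\DH^g_{\CF_v,m}\to\DH^g_{\CF_v}$ weakly, the only point to verify is that the unbounded test function $t$ can be replaced by a compactly supported one. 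Since $A_{X,\D}(v)<\infty$ implies that $\CF_v$ is linearly bounded, all the measures $\DH^g_{\CF_v,m}$ and the limit $\DH^g_{\CF_v}$ are supported in a common compact interval $[-C,C]$; replacing $t$ by a continuous compactly supported function agreeing with $t$ on $[-C,C]$ and applying weak convergence gives $\int t\,\DH^g_{\CF_v,m}\to\int t\,\DH^g_{\CF_v}$. Testing weak convergence with the constant $1$ similarly gives $\Bv^g_m\to\Bv^g$. Consequently
$$\delta^g_{\IT,m}(X,\D)\le\frac{A_{X,\D}(v)}{S^g_m(R_\bu;v)}\xrightarrow{m\to\infty}\frac{A_{X,\D}(v)}{S^g(R_\bu;v)}\le\delta^g_\IT(X,\D)+\varepsilon,$$
and letting $\varepsilon\to 0$ produces the required upper bound on the $\limsup$.

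The main technical input is the uniform comparison $S^g_m\le(1+\varepsilon)S^g$ from Lemma \ref{Lemma: uniform bound of S^g_m/S^g}, since it must hold with the constant $\varepsilon$ \emph{independent} of the filtration; the remaining steps are routine once one knows that the filtration induced by a valuation of finite log discrepancy is linearly bounded and that the weight $g$ is continuous and bounded on the compact moment polytope $\BP$, so that all the weighted Duistermaat--Heckman measures have uniformly controlled supports and masses.
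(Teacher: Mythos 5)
Your proposal is correct and follows essentially the same route as the paper: the $\limsup$ bound via the pointwise convergence $S^g_m(R_\bu;v)\to S^g(R_\bu;v)$ coming from the weak convergence of the weighted DH measures (Lemma \ref{lemma:DHgWC}), and the $\liminf$ bound via the uniform comparison $S^g_m\le(1+\varepsilon)S^g$ of Lemma \ref{Lemma: uniform bound of S^g_m/S^g}. Your extra remarks on common compact supports of the measures and on $\Bv^g_m\to\Bv^g$ merely spell out details the paper leaves implicit.
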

\begin{proof}
    It follows from Lemma \ref{lemma:DHgWC} that $\lim_{m\rightarrow \infty}S_m^g(R_{\bu};\CF) = S^g(R_{\bu};\CF)$ holds for any linearly bounded filtration $\CF$ on $R_{\bu}$.  Recall that a valuation $v\in \Val_X$ with $A_{X,\Delta}(v)<+\infty$ induces a linearly bounded filtration $\CF_{v}$.  We get $\limsup_{m\rightarrow \infty}\delta_{\IT,m}^g(X,\Delta) \leq \delta_{\IT}^g(X,\Delta)$ from the pointwise convergence of $S$-invariant. On the other hand, according to Lemma \ref{Lemma: uniform bound of S^g_m/S^g}, given $\varepsilon>0$ there exists $m_0\in \IN$ such that $S_m^g(R_{\bu};\CF)\leq (1+\varepsilon)S^g(R_{\bu};\CF)$ for any $m\geq m_0$ and any $v\in \Val_X$ with $A_{X,\D}(v)<\infty$. Thus 
    \begin{eqnarray*}
        \mathop{\liminf}_{m\rightarrow \infty}\delta_{\IT,m}^g(X,\D) = \mathop{\liminf}_{m\rightarrow \infty}\mathop{\inf}_{v\in \Val_X} \frac{A_{X,\D}(v)}{S_m^g(R_{\bu};v)}\geq (1+\varepsilon)^{-1}\mathop{\inf}_{v\in \Val_X}\frac{A_{X,\D}(v)}{S^g(R_{\bu};v)}. 
    \end{eqnarray*}
    Letting $\varepsilon \rightarrow 0$, we get $ \mathop{\liminf}_{m\rightarrow \infty}\delta_{\IT,m}^g(X,\D) \geq \delta_{\IT}^g(X,\D)$. This completes the proof.
\end{proof}

\begin{defi} \rm
\label{Definition: T-equivariantly weighted K-semistable}
A log Fano triple $(X,\D, \xi_0)$ with a $\IT$-action is called {\it $\IT$-equivariantly weighted K-semistable} if $A_{X,\D}(v) - S^g(R_\bu;v) \ge 0$ for any $v\in\Val_{X}^\IT$. It is {\it $\IT$-equivariantly weighted K-polystable} if it is $\IT$-equivariantly weighted K-semistable, and $A_{X,\D}(v) - S^g(R_\bu;v) = 0$ implies that $v=\wt_\xi$ for some $\xi\in N_\IR$. 
\end{defi}

This definition is equivalent to \cite[Definition 4.10]{BLXZ23} by \cite[Proposition 4.14]{BLXZ23}. 

\begin{rmk}\rm
Whenever we talk about the weighted K-semistability of a log Fano triple $(X,\D,\xi_0)$, a torus action containing the soliton candidate $\xi_0$ is fixed. We shall omit the words ``$\IT$-equivariantly'' if the $\IT$-action is clear. 
\end{rmk}

\section{Weighted Abban-Zhuang estmate}
\label{Section: Weighted Abban-Zhuang estmate}
In this section, we deal with the invariants of weighted K-stability associated with multi-graded linear series. They are the straightforward generalizations of the invariants in the previous section. We mainly focus on the various formulations which are useful in explicit computations. At the end of the section, we establish the weighted Abban-Zhuang estimate, which gives a lower bound of the $g$-weighted stability threshold.

\subsection{Weighted expected vanishing order}
We work with the same assumption as Section \ref{Subsection: multi-graded linear series with a torus action}. Let $(X_l, \D_l)$ be a $(n-l)$-dimensional pair with a $\IT=\IG^r_m$-action, and $V_\bu$ be a $\IT$-invarant $\IN\times\IN^l$-graded linear series on $X_l$. 
We simply denote by $N_*$ the dimension of $V_*$ where $*=m, (m,\alpha,\bu), (m,\bu,\beta)$ or $(m,\alpha,\beta)$. Let $\CF$ be a $\IT$-invariant linearly bounded filtration on $V_\bu$. We also denote by $N^\lam_*$ the dimension of $\gr^\lam_\CF V_*$ for $*$ as above. Then the definition of $S^g_m(V_\bu;\CF)$ follows
\begin{eqnarray*}
S^g_m(V_\bu; \CF) 
&\coloneqq& 
\frac{1}{\Bv^g_m} \int_\IR t \cdot \DH^g_{\CF, m}(\dif t), 
\end{eqnarray*}
where $\Bv^g_m = \int_\IR \DH^g_{\CF, m}(\dif t)$ and 
\begin{eqnarray*} 
\DH^g_{\CF,m}
&\coloneqq& 
\frac{1}{m^n} \sum_{\alpha\in \BP_m} 
\sum_{\lam\in\IR} 
g(\frac{\alpha}{m}) 
N^\lam_{m,\alpha,\bu}
\cdot \delta_{\frac{\lam}{m}}. 
\end{eqnarray*} 
Using the same argument as Lemma \ref{lemma:DHgWC}, we get $\DH^g_{\CF,m}$ converges weakly to the DH measure $\DH^g_\CF$ on $\IR$. We define $\Bv^g=\lim_{m\to \infty} \Bv^g_m$ and 
\begin{eqnarray*}
S^g(V_\bu; \CF) 
&\coloneqq& 
\frac{1}{\Bv^g} \int_\IR t \cdot \DH^g_{\CF}(\dif t), 
\end{eqnarray*}

Let $\BO\seq \IR^n, \BQ\seq \IR^l, \BP\seq \IR^r$ be the Okounkov body, base polytope, and moment polytope of $V_\bu$ respectively. Denote by $q:\BO\to \BQ$ the natural linear projection. We may assume that the Okounkov body $\BO$ is compatible with the $\IT$-action, then there exists a natural linear projection $p:\BO\to \BP$. We have the discrete measures on $\BO$ and $\BP$ respectively 
\begin{eqnarray*} 
\LE^g_m
&=&\frac{1}{m^n}\sum_{\gamma\in \BO_m}
g(p(\frac{\gamma}{m}))\cdot
\delta_\frac{\gamma}{m},  \\
\DH^g_{\BP,m}
&=&\frac{1}{m^n}\sum_{\alpha\in \BP_m}
g(\frac{\alpha}{m}) \dim(V_{m,\alpha,\bu})
\cdot\delta_\frac{\alpha}{m}.
\end{eqnarray*} 
By \cite[Lemma 1.4]{Xu23}, they converge weakly to $\LE^g$ and $\DH^g_\BP$ respectively. We also have $\Bv^g=\int_\BO\LE^g=\int_\BP\DH^g_\BP=\vol^g(\BO)$, and $\Bv^g_m = \int_\BO \LE^g_m=\int_\BP \DH^g_{\BP,m}=N^g_m/m^n$. 

Let $G^\CF$ be the concave transform of $\CF$, which is a concave function on $\BO$. We have 
\begin{eqnarray} \label{Formula: S^g 2}
S^g(V_\bu; \CF) 
\,\,=\,\,
\frac{1}{\Bv^g} \int_\IR t \cdot \DH^g_\CF
\,\,=\,\,
\frac{1}{\Bv^g} \int_\IR \vol^g\{G\ge t\}\dif t
\,\,=\,\,
\frac{1}{\Bv^g} \int_\BO G^\CF \cdot \LE^g. 
\end{eqnarray}

We may compute $S^g$ using the multi-graded structure. Let $\beta \in \BQ_\IQ$, and denote by $\BO^{(t)}=\{G^\CF\ge t\}$ the Okounkov body of $\CF^{(t)}V_\bu$. Then $q^{-1}(\beta)\cap \BO^{(t)}$ is the Okounkov body of the $\IN$-graded linear series $\CF^{(t)}V_{(1,\bu,\beta)}$, and we have 
\begin{eqnarray*} 
\vol^g(\BO^{(t)}) 
&=& 
\int_\BQ \vol^g(q^{-1}(\beta)\cap \BO^{(t)}) \LE_\BQ(\dif \beta), 
\end{eqnarray*}
where $\LE_\BQ(\dif \beta)$ is the Lebesgue measure on $\BQ$. Hence
\begin{eqnarray} \label{Formula: S^g 3}
S^g(V_\bu; \CF) 
&=&
\frac{1}{\Bv^g} 
\int_\IR
\int_\BQ \vol^g(q^{-1}(\beta)\cap \BO^{(t)}) \LE_\BQ(\dif \beta)
\dif t, 
\end{eqnarray}

\begin{ex}\rm 
\label{Example: formula when Q=P} 
Assume that the projection $p:\BO\to \BP$ factor through $q:\BO\to \BQ$, that is, there exists a linear projection $\bar{p}:\BQ\to \BP$ such that $p=\bar{p}\circ q$. The weighted volume is clear in this case 
\begin{eqnarray*} 
\vol^g(q^{-1}(\beta)\cap \BO^{(t)}) = g(\bar{p}(\beta)) \cdot
\vol(q^{-1}(\beta)\cap \BO^{(t)}). 
\end{eqnarray*}
Changing the order of the integration, we get
\begin{eqnarray}
\label{Formula: S^g 4}
S^g(V_\bu; \CF) 
&=&
\frac{1}{\Bv^g} 
\int_\BP g(\alpha)
\Big(
\int_\IR
\int_{\bar{p}^{-1}(\alpha)} 
\vol(q^{-1}(\beta)\cap \BO^{(t)}) \LE_{\bar{p}^{-1}(\alpha)}(\dif \beta)\dif t
\Big)
\LE_\BP(\dif \alpha) 
\end{eqnarray}
Recalling the definition of the $S$-invariant (formula (\ref{Formula: S^g 3}) with $g=1$), we further have
\begin{eqnarray} 
\label{Formula: S^g 5}
S^g(V_\bu; \CF) 
&=&
\frac{1}{\Bv^g} 
\int_\BP g(\alpha)
\vol(p^{-1}(\alpha))S(V_{(1,\alpha,\bu)};\CF)
\LE_\BP(\dif \alpha). \\
&=&
\frac{1}{\Bv^g} 
\int_\BP S(V_{(1,\alpha,\bu)};\CF)
\DH^g_\BP(\dif \alpha). 
\end{eqnarray}
This is the key formula in our proof of Theorem \ref{Theorem: Intro №2.28 and No 3.14 soliton} and \ref{Theorem: Intro stability of cones}. 
\end{ex}

\subsection{Weighted basis type divisors}
We use the same notions as above. 
\begin{defi} \rm
A {\it $\IT$-invariant $g$-weighted $m$-basis type $\IR$-divisor} $D$ of $V_\bu$ is of the form 
\begin{eqnarray} 
\label{Formula: weighted basis type divisor}
D=
\frac{1}{N^g_m} \sum_{\alpha\in\BP_m} g(\frac{\alpha}{m})
 \frac{D_{m,\alpha,\bu}}{m}, 
\end{eqnarray}
where $D_{m,\alpha,\bu}= \sum_{\beta\in\BQ_{m,\alpha}} D_{m,\alpha,\beta}$ and $D_{m,\alpha,\beta}$ is a basis type divisor of $V_{m,\alpha,\beta}$. 
We say that $D$ is {\it compatible} with a filtration $\CF$ on $V_\bu$ if every $D_{m, \alpha,\beta}$ is compatible with $\CF$ on $V_{m,\alpha,\beta}$. 
\end{defi}

From the definition we see that, if $D$ is compatible with $\CF_v$ for some valuation $v$, then we have 
$$v(D)=S^g_m(V_\bu; \CF_v).$$

\begin{rmk}\rm
The weighted basis type divisor is the key notion in the proof of the weighted Abban-Zhuang estimate (Theorem \ref{Theorem: weighted AZ}). 
\end{rmk}

Let $F$ be a $\IT$-invariant plt-type divisor (Section \ref{Subsection. Valuations}) over $X_l$ and $\pi: Y\to X_l$ be the associated plt-type blowup. For any $\IT$-invariant multi-graded linear series $V_\bu$, we denote by $W_\bu$ the refinement of $V_\bu$ by $\CF=\CF_F$. We consider a $\IT$-invariant $g$-weighted $m$-basis type $\IR$-divisor $D$ which is compatible with $F$ and has decomposition as (\ref{Formula: weighted basis type divisor}). 

If $F$ is a toric divisor (that is, $\ord_F=\wt_\xi$ for some $\xi\in N(\IT)$), then $W_{m,\alpha,\beta,\Phi(\alpha)} = V_{m,\alpha,\beta} \cdot s_F^{-\Phi(\alpha)} |_F$ locally (for $m$ sufficiently divisible such that $\Phi(\alpha) \in \IZ$ for all $\alpha\in \BP_m$), where $s_F\in \CO_X$ is the local defining function of $F$. Globally we have $\pi^*D_{m,\alpha,\beta} = \Phi(\alpha) N_{m,\alpha,\beta} \cdot F + \Gamma_{m,\alpha,\beta,\Phi(\alpha)}$, where $\Gamma_{m,\alpha,\beta,\Phi(\alpha)}$ does not contain $F$ as a component, and $\Gamma_{m,\alpha,\beta,\Phi(\alpha)}|_F$ is a basis type divisor of $W_{m,\alpha,\beta,\Phi(\alpha)}$. Hence we have 
\begin{eqnarray*} 
\Gamma
&=& 
\frac{1}{N^g_m}
\sum_{\alpha\in\BP_m(V_\bu)} 
g(\frac{\alpha}{m}) 
\frac{1}{m} \cdot
\Gamma_{m,\alpha,\bu,\Phi(\alpha)}, \\
\pi^*D
&=&
\Big(
\frac{1}{N^g_m} 
\sum_{\alpha\in \BP_m(V_\bu)} 
g(\frac{\alpha}{m}) 
\frac{\Phi(\alpha)}{m} N_{m,\alpha,\bu} 
\Big) \cdot F
+ \Gamma\\
&=& S^g_m(V_\bu; F) \cdot F
+ \Gamma, 
\end{eqnarray*}
where 
$\BP_m(W_\bu)=\BP_m(V_\bu),
\BQ_{m,\alpha}(W_\bu)
=\BQ_{m,\alpha}(V_\bu) 
\times \{\Phi(\alpha)\}. $
Hence $\Gamma|_F$ is a $\IT$-invariant $g$-weighted $m$-basis type $\IR$-divisor of $W_\bu$. 

If $F$ is vertical, then $W_{m,\alpha,\beta,j}=\CF^jV_{m,\alpha,\beta}\cdot s_F^{-j}|_F$ locally. Hence we have $D_{m,\alpha,\beta}=\sum_jD_{m,\alpha,\beta,j}$, where $\ord_F(D_{m,\alpha,\beta,j})=j$. Then 
$$D_{m,\alpha,\beta,j}
=jN^j_{m,\alpha,\beta}\cdot F + 
\Gamma_{m,\alpha,\beta,j}, $$ 
where $\Gamma_{m,\alpha,\beta,j}$ does not contain $F$ as a component. Summing up we also have
\begin{eqnarray*} 
\pi^*D
&=& S^g_m(V_\bu; F) \cdot F + \Gamma, \\
\Gamma
&=& 
\frac{1}{N^g_m}
\sum_{\alpha\in\BP_m(W_\bu)} 
g(\frac{\alpha}{m}) 
\frac{1}{m} \cdot 
\Gamma_{m,\alpha,\bu}, \\
\Gamma_{m,\alpha,\bu}
&=&
\sum_{(\beta,j)\in\BQ_{m,\alpha}(W_\bu)}
\Gamma_{m,\alpha,\beta,j}
\end{eqnarray*}
Hence $\Gamma|_F$ is a $\IT$-invariant $g$-weighted $m$-basis type $\IR$-divisor of $W_\bu$. 

\begin{rmk} \rm
We conclude that, for any $\IT$-invariant $g$-weighted $m$-basis type $\IR$-divisor $D$ of $V_\bu$ which is compatible with $F$, we have decomposition 
\begin{eqnarray}
\label{Formula: refinement of weighted basis type divisor}
\pi^*D
&=& S^g_m(V_\bu; F) \cdot F + \Gamma, 
\end{eqnarray}
where $\Gamma$ does not contain $F$ as a component and $\Gamma|_F$ is a $\IT$-invariant $g$-weighted $m$-basis type $\IR$-divisor of $W_\bu$. 
\end{rmk}

\subsection{Local weighted stability thresholds}
\begin{defi} \rm
\label{Definition: local delta}
Let $U$ be a quasi-projective variety with a $\IT$-action, $(X,\D)$ be a klt pair with a $\IT$-action, and $f:(X, \D)\to U$ be a $\IT$-equivariant projective morphism. We denote by $V_\bu$ a $\IT$-invariant multi-graded linear series on $X$. For any $\IT$-invariant subvariety $Z\seq X$. We define 
\begin{eqnarray*} 
\delta^g_{Z, \IT, m}(X,\D;V_\bu) 
= \inf_{v} \frac{A_{X,\D}(v)}{S^g_m(V_\bu; v)}, \quad
\delta^g_{Z, \IT}(X,\D;V_\bu) 
= \inf_{v} \frac{A_{X,\D}(v)}{S^g(V_\bu; v)}, 
\end{eqnarray*}
where the infimum runs over all the valuations $v\in \Val^{\IT,\circ}_X$ passing through $Z$, that is, $C_X(v)\supseteq Z$. 
\end{defi}

For a $\IT$-invariant valuation $v$ passing through $Z$, we have $S^g_m(V_\bu; v)=\sup_D v(D)$ where the supremum runs over all the $\IT$-invariant $g$-weighted $m$-basis type $\IR$-divisor $D$ of $V_\bu$. For such a divisor $D$, the local log canonical threshold  
$$\lct_Z(X,\D;D)
\coloneqq\sup\{t\in \IR\mid (X,\D+tD) \text{ is lc at the generic point of } Z\}
= \mathop{\inf}_{v:C_X(v)\supseteq Z} \frac{A_{X,\D}(v)}{v(D)}$$ 
is minimized by a $\IT$-invariant valuation. Hence 
\begin{eqnarray} \label{formula: delta_m = lct}
\delta^g_{Z, \IT, m}(X,\D;V_\bu) 
= \inf_{v} \inf_D
 \frac{A_{X,\D}(v)}{v(D)}
=\inf_D \lct_Z(X,\D;D), 
\end{eqnarray}
where the infimum runs over all the $\IT$-invariant $g$-weighted $m$-basis type $\IR$-divisor $D$ of $V_\bu$. Similarly, we have $\delta^g_{Z, \IT}(X,\D;V_\bu) =\inf_D \lct_Z(X,\D;D)$, where the infimum runs over all the $\IT$-invariant $g$-weighted $m$-basis type $\IR$-divisor $D$ of $V_\bu$ for all $m\in \IN$.

\subsection{Weighted Abban-Zhuang estmate}
Consider $f:(X,\D)\to U$ and $V_\bu$ as above. Let $F$ be a plt-type divisor over $X$, with the associated plt-type blowup $\pi: Y \to X$. 
We denote by $W_{\bu}$ the refinement of $V_\bu$ by $F$, and by $\D_Y$ be the strict transform of $\D$ on $Y$. Then we have 
\begin{eqnarray*}
K_Y+\D_Y+(1-A_{X,\D}(F))F=\pi^*(K_X+\D).
\end{eqnarray*}
We denote by $\D_F$ the difference of $\D_Y$ on $F$, then
\begin{eqnarray*}
(K_Y+\D_Y+F)|_F=K_F+\D_F. 
\end{eqnarray*}

\begin{thm} \label{Theorem: weighted AZ}
Let $Z \subseteq C_X(F)$ be a $\IT$-invariant subvariety. Then we have 
\begin{eqnarray} \label{ineq.1}
\delta^{g}_{Z, \IT}(X,\D;V_\bu) \ge \min\Big\{\frac{A_{X,\D}(F)}{S^{g}(V_\bu; F)},\,\, \mathop{\inf}_{\pi(Z')=Z} \delta^{g}_{Z',\IT}(F,\D_F;W_\bu) \Big\}. 
\end{eqnarray}
\end{thm}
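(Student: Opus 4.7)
The plan is to adapt the Abban--Zhuang argument of \cite[Theorem 3.2]{AZ22} to the weighted equivariant setting, using $\IT$-invariant $g$-weighted $m$-basis type divisors compatible with $\CF_F$ and the decomposition formula (\ref{Formula: refinement of weighted basis type divisor}). I would first establish the finite-level analog
\begin{equation*}
\delta^g_{Z,\IT,m}(X,\D;V_\bu) \geq \min\Big\{\tfrac{A_{X,\D}(F)}{S^g_m(V_\bu;F)},\ \inf_{\pi(Z')=Z}\delta^g_{Z',\IT,m}(F,\D_F;W_\bu)\Big\},
\end{equation*}
and then pass to the limit $m\to\infty$.

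By (\ref{formula: delta_m = lct}), the finite-level statement reduces to showing $\lct_Z(X,\D;D)\geq c_m$ for every $\IT$-invariant $g$-weighted $m$-basis type $\IR$-divisor $D$ of $V_\bu$, where $c_m$ denotes the right-hand side. Applying \cite[Lemma 3.1]{AZ22} within each weight space $V_{m,\alpha,\beta}$ (which preserves $\IT$-equivariance), I may further restrict attention to those $D$ that are compatible with $\CF_F$; for such $D$, formula (\ref{Formula: refinement of weighted basis type divisor}) gives $\pi^*D = s_m F + \Gamma$ with $s_m=S^g_m(V_\bu;F)$, where $\Gamma$ does not contain $F$ as a component and $\Gamma|_F$ is again a $\IT$-invariant $g$-weighted $m$-basis type divisor, but now of $W_\bu$ on $F$.

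Setting $a=A_{X,\D}(F)$ and pulling back,
\begin{equation*}
\pi^*(K_X+\D+c_m D) = K_Y + \D_Y + (1-a+c_m s_m)F + c_m \Gamma,
\end{equation*}
and the inequality $c_m\leq a/s_m$ forces the coefficient of $F$ to be at most $1$. Since $Z\subseteq C_X(F)$ implies $\pi^{-1}(Z)\subseteq F$, and since decreasing the coefficient of $F$ only increases log discrepancies, it is enough to show that $(Y,\D_Y+F+c_m\Gamma)$ is lc at every $\IT$-invariant valuation on $Y$ whose centre contains a component $Z'\subseteq F$ of $\pi^{-1}(Z)$ that dominates $Z$ under $\pi$. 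Kawakita's inversion of adjunction --- applicable because the plt-type blowup makes $(Y,\D_Y+F)$ plt along $F$ and because $\Gamma$ does not contain $F$ --- then reduces this to checking lc of $(F,\D_F+c_m\Gamma|_F)$ near each such $Z'$. But $\Gamma|_F$ is a $\IT$-invariant $g$-weighted $m$-basis type divisor of $W_\bu$, so
\begin{equation*}
\lct_{Z'}(F,\D_F;\Gamma|_F)\geq \delta^g_{Z',\IT,m}(F,\D_F;W_\bu)\geq c_m
\end{equation*}
by the choice of $c_m$.

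Finally, I would pass from the finite-level estimate to the asymptotic statement by letting $m\to\infty$: Lemma \ref{Lemma: uniform bound of S^g_m/S^g} gives $S^g_m\to S^g$ uniformly in the filtration, which forces $\delta^g_{Z',\IT,m}\to \delta^g_{Z',\IT}$ uniformly in $Z'$, and so $\inf_{\pi(Z')=Z}$ commutes with the limit. The main subtle point I expect to have to argue carefully is the identification of exactly which subvarieties $Z'\subseteq F$ contribute: only irreducible components of $\pi^{-1}(Z)$ which dominate $Z$ correspond to valuations with centre containing the generic point of $Z$ on $X$, and this is what produces the condition $\pi(Z')=Z$ (rather than the weaker $\pi(Z')\supseteq Z$) on the right-hand side of (\ref{ineq.1}).
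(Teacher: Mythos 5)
Your proposal is correct and is in substance the same as the paper's proof: the paper likewise reduces to the finite-level statement via $\IT$-invariant $g$-weighted $m$-basis type divisors compatible with $\CF_F$ (packaged as basis type divisors of the $g$-weighted boundary $V^g_m$ in Lemma \ref{Theorem: weighted AZ 2}), uses the decomposition (\ref{Formula: refinement of weighted basis type divisor}) plus inversion of adjunction and the fact that lct of a $\IT$-invariant divisor is computed by $\IT$-invariant valuations, and then passes to the limit $m\to\infty$ via Lemma \ref{Lemma: uniform bound of S^g_m/S^g}. The only difference is cosmetic: you prove the weighted finite-level inequality directly, whereas the paper factors it through the unweighted equivariant estimate for boundaries; note also that the limit step only needs the one-sided uniform bound $S^g_m\le(1+\varepsilon)S^g$, not full uniform convergence of $\delta^g_{Z',\IT,m}$.
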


\begin{rmk}\rm
If $F$ is a divisor on $X$, we may set $Y=X, \D=\D_Y+\ord_F(\D)F$, and assume that $(Y,\D_Y+F)$ is lc in a neighbourhood of $F$. The theorem still holds in this case, with $\delta^g_Z(F,\D_F;W_\bu)$ replaced by $\delta^g_Z(\hat{F},\D_{\hat{F}};W_\bu)$, where $\hat{F}$ is the normalization of $F$. 
\end{rmk}

The theorem is a strengthened version of \cite[Theorem 3.2]{AZ22}, which is very useful in explicit computations. We will state a proof in the next section under a more general setting.

\section{$G$-Equivariant weighted K-stability}
\label{Section: G-Equivariant weighted K-stability}

Let $G$ be an algebraic group. In this section, we prove the equivalence of the $G$-equivariant $g$-weighted K-semistability and the $\IT$-equivariant $g$-weighted K-semistability for a log Fano triple $(X,\D,\xi_0)$ with a $G$-action, where $\IT\seq G$ is a subtorus containing $\xi_0$. This is an application of \cite{Zhu21} to the {\it $g$-weighted boundaries} on $X$.

\subsection{Equivariant Abban-Zhuang estimate of boundaries}
We first recall the notion of boundaries introduced by \cite{Zhu21}. 

Let $(X, \D)$ be a pair. A {\it boundary} $V$ on $X$ is a formal sum $V=\sum_i a_i V_i$, where $a_i \in \IR_{>0}$ and $V_i\seq H^0(X,L_i)$ are subspaces of finite dimensions for some line bundle $L_i$ on $X$.  A {\it basis type ($\IR$-)divisor} of $V$ is of the form $D=\sum_i a_i D_i$, where $D_i=\frac{1}{\dim V_i}\sum_j\{s_j=0\}$, and $\{s_j\} \seq V_i$ is a basis. Then we define $c_1(V)$ by $\sum_i a_i L_i \sim_\IR D$. For any filtration $\CF$ on $V$, we have $S(V;\CF)= \sum_i a_i \sum_\lam \lam \cdot \frac{\dim \gr^\lam_\CF V_i}{\dim V_i}$. 

Let $F$ be a plt-type divisor over $X$ with the associated plt-type blowup $\pi: Y\to X$. The {\it refinement} of $V$ by $F$ is defined as 
$$W=\sum_i a_i \sum_\lam \frac{\dim \gr^\lam_\CF V_i}{\dim V_i}\cdot V_i(-\lam F)|_F, $$
where $V_i(-\lam F)|_F \cong \gr^\lam_\CF V_i$ is the image of $\CF_F^\lam V_i$ under the restriction map $H^0(Y, \pi^*L_i-\lam F)\to H^0(F, (\pi^*L_i-\lam F)|_F)$. For any basis type divisor $D$ of $V$, we have
\begin{eqnarray} \label{Formula: refinement of basis type divisor of a boundary}
\pi^*D=S(V;F)F + \Gamma,  
\end{eqnarray}
where $\Gamma$ does not contain $F$ as a component, and $\Gamma|_F$ is a basis type divisor of $W$. We make a remark that any basis type divisor of $W$ is obtained in this way. 

Assume moreover that $(X,\D)$ admits an algebraic group $G$-action, and the line bundles $L_i$ admit $G$-linearization such that $V_i\seq H^0(X,L_i)$ are $G$-invariant ($V=\sum_i a_i V_i$ is called a {\it $G$-invariant boundary} in this case). Let $U$ be a $G$-variety and $f: (X, \D)\to U$ be a $G$-equivariant morphism. For a $G$-invariant subvariety $Z\seq X$, we shall define $\delta_{Z,G}(V)=\delta_{Z,G}(X,\D,V) = \inf_v \frac{A_{X,\D}(v)}{S(V;v)}$ as Definition \ref{Definition: local delta}, where the infimum runs over all the $G$-invariant valuations $v$ on $X$ passing through $Z$. 

If $G=\IT$ is a torus, we have the following equivariant version of the Abban-Zhuang estimate. 
\begin{lem} \label{Theorem: weighted AZ 2}
For any $\IT$-invariant plt-type divisor $F$ over $X$ and $\IT$-invariant subvariety $Z\seq C_X(F)$, we have
$$
\delta_{Z, \IT}(X,\D;V) \ge \min\Big\{\frac{A_{X,\D}(F)}{S(V; F)},\,\, \mathop{\inf}_{\pi(Z')=Z}\delta_{Z',\IT}(F,\D_F;W) \Big\}, 
$$
where $W$ is the refinement of $V$ by $F$. 
\end{lem}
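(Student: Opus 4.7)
My plan is to transplant the non-equivariant Abban--Zhuang argument \cite[Theorem 3.2]{AZ22} into the boundary setting, keeping track of $\IT$-invariance throughout and invoking inversion of adjunction along the plt-type divisor $F$.

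I will fix an arbitrary $\IT$-invariant valuation $v$ with $C_X(v)\supseteq Z$ and prove
$$\frac{A_{X,\D}(v)}{S(V;v)} \,\ge\, c \,:=\, \min\Big\{\frac{A_{X,\D}(F)}{S(V;F)},\,\,\inf_{\pi(Z')=Z}\delta_{Z',\IT}(F,\D_F;W)\Big\}.$$
Since both $v$ and $F$ are $\IT$-invariant, the filtrations $\CF_v$ and $\CF_F$ on each $V_i\seq H^0(X,L_i)$ preserve the weight decomposition $V_i=\bigoplus_\alpha V_{i,\alpha}$; applying the simultaneous compatibility lemma \cite[Lemma 3.1]{AZ22} inside each weight eigenspace I will produce a basis of $V_i$ compatible with $\CF_v$, $\CF_F$, and the weight decomposition all at once. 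The associated basis type divisor $D=\sum_i a_iD_i$ will then be $\IT$-invariant and will satisfy $v(D)=S(V;v)$ by $\CF_v$-compatibility; by the refinement formula for basis type divisors of a boundary, its pullback decomposes as $\pi^*D = S(V;F)F + \Gamma$, where $\Gamma$ does not contain $F$ and $\Gamma|_F$ is a basis type divisor of the refinement $W$.

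With this divisor in hand, the log pullback will read
$$\pi^*(K_X+\D+cD) \,=\, K_Y + \D_Y + \big(1 - A_{X,\D}(F) + cS(V;F)\big)F + c\Gamma.$$
The inequality $c \le A_{X,\D}(F)/S(V;F)$ will force the coefficient $a:=1-A_{X,\D}(F)+cS(V;F)$ of $F$ to satisfy $a\le 1$. Since $\pi$ is $\IT$-equivariant and $Z\seq C_X(F)=\pi(F)$, I will choose a $\IT$-invariant $Z'\seq F$ with $\pi(Z')=Z$ that contains the image on $F$ of the relevant center of $v$. The inequality $c \le \delta_{Z',\IT}(F,\D_F;W) \le \lct_{Z'}(F,\D_F;\Gamma|_F)$ (the second step because $\Gamma|_F$ is itself a basis type divisor of $W$) will give lc-ness of $(F,\D_F+c\Gamma|_F)$ at the generic point of $Z'$. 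Kawakita's inversion of adjunction applied to the plt pair $(Y,\D_Y+F)$ will then upgrade this to lc-ness of $(Y,\D_Y+F+c\Gamma)$ near the generic point of $Z'$, hence of $(Y,\D_Y+aF+c\Gamma)$ since $a\le 1$. Unwinding via the log pullback, this is precisely the desired inequality $A_{X,\D}(v)\ge c\cdot v(D)=c\cdot S(V;v)$.

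The hard part will be this last inversion-of-adjunction step: one must ensure that the center $c_Y(v)$, which may sit on $F$ in high codimension, is dominated by the generic point of a suitable $\IT$-invariant $Z'\seq F$ with $\pi(Z')=Z$, so that the lc estimate on $F$ transfers to a neighbourhood of that generic point on $Y$ via Kawakita's theorem. Once this is arranged, the rest reduces to $\IT$-equivariant bookkeeping of the original Abban--Zhuang proof, with the two structural insertions being the weight-space-by-weight-space construction of the compatible basis and the $\IT$-equivariance of the refinement formula, both of which are immediate from the definitions.
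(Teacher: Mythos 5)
Your proposal follows essentially the same route as the paper's proof: the paper invokes \cite[Proposition 3.1]{AZ22} to reduce $\delta_{Z,\IT}(X,\D;V)$ to an infimum of $\lct_Z(X,\D;\cdot)$ over $\IT$-invariant basis type divisors compatible with $F$, and then runs exactly your chain — the decomposition $\pi^*D=S(V;F)F+\Gamma$, the observation that the coefficient $1-A_{X,\D}(F)+\lam S(V;F)$ of $F$ is at most $1$, inversion of adjunction along $F$, and the bound $\lct(F,\D_F;\Gamma|_F)\ge\delta_{\IT}(F,\D_F;W)$ coming from the $\IT$-invariance of $\Gamma|_F$. Your weight-space-by-weight-space construction of a basis compatible with $\CF_v$, $\CF_F$ and the weight decomposition is precisely the content of that proposition, so fixing $v$ first instead of fixing a compatible $D$ first is only a reorganization, not a different argument.

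The step you flag as the ``hard part'' is not a genuine obstruction, but you should close it explicitly. Since $-F$ is $\pi$-ample and the fibers of the plt-type blowup are connected, every curve contracted by $\pi$ is $F$-negative and hence contained in $F$; together with connectedness this gives $\pi^{-1}(\pi(F))=F$ set-theoretically, so every point of $Y$ lying over the generic point $\eta_Z$ of $Z$ lies on $F$. As $\pi(C_Y(v))=C_X(v)\supseteq Z$ and $\pi$ is proper, $C_Y(v)$ contains a point $y$ over $\eta_Z$, necessarily on $F$; set $Z'=\overline{\IT\cdot y}\seq F$, which is irreducible, $\IT$-invariant and satisfies $\pi(Z')=Z$. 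If $(F,\D_F+c\,\Gamma|_F)$ were not lc at $y$, its non-lc locus (which is $\IT$-invariant) would contain $Z'$, so $\lct_{Z'}(F,\D_F;\Gamma|_F)<c$; since $\Gamma|_F$ is $\IT$-invariant this threshold is computed by a $\IT$-invariant valuation $w$ with $C_F(w)\supseteq Z'$, and $w(\Gamma|_F)\le S(W;w)$ because $\Gamma|_F$ is a basis type divisor of $W$, contradicting $c\le\delta_{Z',\IT}(F,\D_F;W)$. (Note that your parenthetical justification ``because $\Gamma|_F$ is a basis type divisor of $W$'' only gives the second of these two ingredients; the equivariance of the lct minimizer is also needed, as the paper states explicitly.) With lc-ness of $(F,\D_F+c\,\Gamma|_F)$ at $y$ in hand, Kawakita's theorem gives lc-ness of $(Y,\D_Y+F+c\Gamma)$, hence of $(Y,\D_Y+(1-A_{X,\D}(F)+cS(V;F))F+c\Gamma)$, in a neighbourhood of $y\in C_Y(v)$, and unwinding the log pullback yields $A_{X,\D}(v)\ge c\,v(D)=c\,S(V;v)$ as you intended.
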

\begin{proof}
We denote by $\lam$ the right hand side of the inequality. By \cite[Proposition 3.1]{AZ22}
$$\delta_{Z,\IT}(X,\D;V)=\delta_{Z,\IT}(X,\D;\CF_F;V)=\inf_D\lct_Z(X,\D;D), $$ 
where the infimum runs over all the $\IT$-invariant basis type divisor $D$ of $V$ compatible with $F$. Hence it suffices to show that $(X,\D+\lam D)$ is lc at the generic point of $Z$ for any $D$ as above. 

Let $\pi:Y\to X$ be the associated plt-type blowup of $F$, and $\D_Y$ be the strict transform of $\D$ to $Y$. Then 
$\pi^*(K_X+\D)=K_Y+\D_Y+(1-A_{X,\D}(F))F$, and $(K_Y+\D_Y+F)|_F=K_F+\D_F$. For any $\IT$-invariant basis type divisor $D$ of $V$ compatible with $F$, we have
$\pi^*D = S(V; F)\cdot F + \Gamma$ by (\ref{Formula: refinement of basis type divisor of a boundary}), where $\Gamma$ is the strict transform of $D$ on $Y$, and $F \nsubseteq \Supp \Gamma$. So we have 
\begin{eqnarray}
\label{Formula: adjunction pair in AZ 2}
\pi^*(K_X+\D+\lam D)=K_Y+\D_Y+(1-A_{X,\D}(F)+\lam S(V;F))F +\lam\Gamma. 
\end{eqnarray}
Note that $1-A_{X,\D}(F)+\lambda S(V; F) \le1$. Hence it suffices to show that $(Y,\D_Y+F+\lam \Gamma)$ is lc at the generic point of $Z$, which is equivalent that $(F, \D_F+\lambda \Gamma|_F)$ is lc at the generic point of $Z$ by inversion of adjunction. 
Since $\Gamma|_F$ is $\IT$-invariant basis type divisor of $W$, $\lct_Z(F,\D_F;\Gamma|_F)$ is minimized by a $\IT$-invariant valuation. Hence $\lct_Z(F,\D_F;\Gamma|_F) \ge \delta_{Z,\IT}(F,\D_F;W)\ge \lam$. The proof is finished. 
\end{proof}

We are ready to give a proof of the weighted Abban-Zhuang estimate (Theorem \ref{Theorem: weighted AZ}). 

\begin{defi}\rm 
\label{Definition: g-weighted boundary}
Using notions in Theorem \ref{Theorem: weighted AZ}, we define the {\it $m$-th $g$-weighted boundary} of $V_\bu$ by 
\begin{eqnarray*}
V^g_m = \sum_{\alpha, \beta}  
g(\frac{\alpha}{m})
\frac{N_{m,\alpha,\beta}}{m\cdot N^g_m} \cdot V_{m,\alpha,\beta}, 
\end{eqnarray*}
(recall that $N^g_m=\sum_{\alpha,\beta}g(\frac{\alpha}{m})N_{m,\alpha,\beta}$). 
Then the $\IT$-invariant $g$-weighted $m$-basis type $\IR$-divisor of $V_\bu$ is just the basis type divisor of $V^g_m$, and $S^g_m(V_\bu;\CF)=S(V^g_m; \CF)$ for any $\IT$-invariant filtration $\CF$ on $V_\bu$. The refinement of $V^g_m$ by $F$ is just the $m$-th $g$-weighted boundary $W^g_m$ of $W_\bu$. Hence 
\begin{eqnarray*}
\delta^{g}_{Z',\IT,m}(F,\D_F,W_\bu)
&=& 
\delta_{Z',\IT}(F,\D_F,W^g_m). 
\end{eqnarray*}
\end{defi}

\begin{proof}[Proof of Theorem \ref{Theorem: weighted AZ}]
We denote by $\lambda$ the right hand side of the inequality (\ref{ineq.1}), and let 
\begin{eqnarray*} 
\lambda_m \coloneqq \min\Big\{\frac{A_{X,\D}(F)}{S^{g}_m(V_\bu; F)},\,\,\mathop{\inf}_{\pi(Z')=Z} \delta^{g}_{Z',\IT,m}(F,\D_F,W_\bu) \Big\}. 
\end{eqnarray*}
One can show that $\lim_{m \to \infty} \lambda_m = \lambda $ by Lemma \ref{Lemma: uniform bound of S^g_m/S^g}. We conclude by Theorem \ref{Theorem: weighted AZ 2}. 
\end{proof}

\subsection{The equivalence of equivariant weighted K-semistability for different groups}
Let $G$ be an algebraic group and $(X,\D)$ be a log Fano pair with a $G$-action. Let $\xi_0$ be the soliton candidate and  and $\IT\seq G$ be a subtorus containing $\xi_0$. We show that the equivariant weighted K-semistability of $(X,\D)$ with respect to $G$ and $\IT$ are equivalent in this subsection. 

\begin{thm}
Let $(X,\D,\xi_0)$ be a log Fano triple admitting an algebraic group $G$-action, and $\IT\seq G$ be a subtorus containing $\xi_0$. If $(X,\D,\xi_0)$ is not $\IT$-equivariantly $g$-weighted K-semistable, then $\delta^g_{\IT}(X,\D)$ is achieved by a sequence of $G$-invariant weakly special divisors over $X$. 
\end{thm}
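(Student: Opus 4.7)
The plan is to reduce the theorem to a statement about equivariant delta invariants of \emph{boundaries}, and then to invoke \cite{Zhu21} in that setting. For each sufficiently divisible integer $m$, form the $\IT$-invariant $g$-weighted boundary $R^g_m$ of the anti-canonical ring $R=R(X,\D)$, as in Definition \ref{Definition: g-weighted boundary}. By the very definition,
\[
\delta^g_{\IT,m}(X,\D) \;=\; \delta_{\IT}\bigl(X,\D;\, R^g_m\bigr).
\]
A key observation is that $R^g_m$ is $G^0$-invariant, where $G^0$ is the identity component of $G$. Indeed, the soliton candidate $\xi_0$ is the unique minimizer of the strictly convex function $\BH$ on $N(\IT)_\IR$, so $\xi_0$ is fixed by the conjugation action of $G^0$ on the Lie algebra; hence $\IT$ may be taken to lie in the centre of the reductive part of $G^0$, which forces $G^0$ to preserve every $\IT$-weight space $R_{m,\alpha}$ as well as the coefficients $g(\alpha/m)$ in the definition of $R^g_m$.

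Once $G$-invariance of $R^g_m$ is in place, the argument of \cite{Zhu21} applies essentially verbatim. That proof is really a statement about delta invariants of $G$-invariant graded linear series or boundaries: one starts from any $\IT$-invariant near-minimizing valuation on $(X,\D;R^g_m)$, runs an equivariant specialization/degeneration to produce a $G$-invariant valuation without increasing the ratio $A_{X,\D}/S(R^g_m;\cdot)$, and approximates by Koll\'ar components to output $G$-invariant weakly special divisors. Transplanting this procedure to the boundary $R^g_m$ yields, for every $\varepsilon > 0$ and every sufficiently divisible $m$, a $G$-invariant weakly special divisor $F_{m,\varepsilon}$ over $X$ with
\[
\frac{A_{X,\D}(F_{m,\varepsilon})}{S^g_m(R;\,F_{m,\varepsilon})} \;<\; \delta^g_{\IT,m}(X,\D) + \varepsilon.
\]

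Diagonalizing finishes the proof. By Lemma \ref{Lemma: convergence of delta_m} and Lemma \ref{Lemma: uniform bound of S^g_m/S^g}, the invariants $\delta^g_{\IT,m}(X,\D)$ converge to $\delta^g_{\IT}(X,\D)$ and the passage from $S^g_m$ to $S^g$ is uniform in the valuation (the non-semistability assumption $\delta^g_{\IT}(X,\D)<1$ rules out degeneracies among the approximating valuations). Choosing a sequence $\varepsilon_m \downarrow 0$ and setting $F_m \coloneqq F_{m,\varepsilon_m}$, the sequence $\{F_m\}$ consists of $G$-invariant weakly special divisors over $X$ with $A_{X,\D}(F_m)/S^g(R;F_m) \to \delta^g_{\IT}(X,\D)$, which is the desired conclusion.

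The main obstacle is to verify that Zhu's equivariantization procedure for valuations transports faithfully from the anti-canonical ring to the $g$-weighted boundary $R^g_m$. The compatibility ultimately reduces to the $G^0$-invariance of both the weight decomposition of $R_m$ and the weight function $g$ established in the first paragraph: any $G$-equivariant specialization of $X$ induces a compatible specialization of each weight space $R_{m,\alpha}$ and hence of the whole boundary $R^g_m$, and this is exactly what makes each step of Zhu's argument---flat limits, lower semicontinuity of log discrepancies, preservation of weak specialness---carry over without modification.
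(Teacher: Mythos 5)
Your overall skeleton (pass to the $g$-weighted boundary $V^g_m=R^g_m$, identify $\delta^g_{\IT,m}(X,\D)=\delta_\IT(X,\D;V^g_m)$, invoke Zhuang's machinery, then diagonalize with Lemma \ref{Lemma: convergence of delta_m} and Lemma \ref{Lemma: uniform bound of S^g_m/S^g}) is the same as the paper's, but the two load-bearing steps are not sound. First, your justification for applying \cite{Zhu21} rests on the claim that $R^g_m$ is $G^0$-invariant because $\xi_0$ is fixed by conjugation and ``$\IT$ may be taken to lie in the centre of the reductive part of $G^0$.'' This is false in the paper's own applications: for №2.28 one has $G^0=\IG_a^3\rtimes\IG_m$ and $\IT=\IG_m$, the unipotent part does not fix $\xi_0$ under the adjoint action (the semidirect product is nontrivial) and does not preserve the $\IT$-weight spaces $R_{m,\alpha}$, so $R^g_m$ is not a $G^0$-invariant boundary in the sense used here. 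Moreover you are not free to replace $\IT$ by a central torus: $\IT$ is part of the data of the statement, and the weight function $g$, the moment polytope, and the invariant $\delta^g_\IT(X,\D)$ all depend on it. Since your plan is to re-run Zhuang's equivariant specialization ``essentially verbatim,'' and that procedure controls $S(R^g_m;\cdot)$ only through invariance of the data being degenerated, the proposal as written has no valid substitute for this step; the paper instead quotes \cite[Theorem 4.4]{Zhu21} applied to $V^g_m$ directly, with the hypotheses verified as below, rather than through any invariance claim of the kind you make.

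Second, you never identify where the hypothesis that $(X,\D,\xi_0)$ is not $\IT$-equivariantly weighted K-semistable actually enters; saying it ``rules out degeneracies among the approximating valuations'' is not its role. In the paper's proof it is used to guarantee, via Lemma \ref{Lemma: convergence of delta_m}, that $\delta_m=\delta_\IT(X,\D;V^g_m)<1$ for $m\gg0$, whence $c_1(V^g_m)\sim_\IR-(K_X+\D)$ gives that $-(K_X+\D+\delta_m c_1(V^g_m))\sim_\IR-(1-\delta_m)(K_X+\D)$ is ample. This positivity is exactly the hypothesis needed to invoke \cite[Theorem 4.4]{Zhu21} (producing a $G$-invariant prime divisor $E_m$ that exactly minimizes $\delta_m$), and it is also what produces the weak specialness: $E_m$ is an lc place of $\delta_m D+(1-\delta_m)H\sim_\IR-(K_X+\D)$, where $D$ is a basis type divisor of $V^g_m$ compatible with $E_m$ and $H$ is a general effective $\IR$-divisor in $|-(K_X+\D)|_\IR$. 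In your write-up the weak specialness of the output divisors is merely asserted via an unexplained Koll\'ar-component approximation, with no complement construction and hence no use of $\delta_m<1$. The final diagonalization paragraph is fine and matches the paper, but without the ampleness/complement mechanism and with the incorrect invariance claim, the core of the argument is missing.
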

\begin{proof}
We follow the proof of \cite[Theorem 4.1]{Zhu21}. Let $V_\bu=R(X,\D)$ be the anti-canonical ring of $(X,\D)$, and $\delta_m = \delta_\IT(X,\D;V^g_m)$. Hence $c_1(V^g_m)\sim_\IR -(K_X+\D)$ is ample. By Lemma \ref{Lemma: uniform bound of S^g_m/S^g}, for any $\varepsilon>0$ there exists $m_0=m_0(\varepsilon)\in\IN$ such that $S(V^g_m;\CF)\le (1+\varepsilon) S^g(V_\bu;\CF)$ holds for all $m\ge m_0$ and all the linearly bounded filtrations $\CF$ on $V_\bu$. On the other hand, we shall increase $m_0\in\IN$ such that $\delta_m<1$ for all $m\ge m_0$ by Lemma \ref{Lemma: convergence of delta_m}. Then $-(K_X+\D+\delta_mc_1(V^g_m))\sim_{\IR} -(1-\delta_m)(K_X+D)$ is ample.  
By \cite[Theorem 4.4]{Zhu21} (with $U=$ point and $V=V^g_m$), we have $G$-invariant prime divisor $E_m$ minimizing $\delta_m = \delta_\IT(X,\D;V^g_m)=\delta^g_{\IT,m}(X,\D;V_\bu)$. Hence
$$(1+\varepsilon) \delta_m = 
(1+\varepsilon)
\frac{A_{X,\D}(E_m)}{S(V^g_m;E_m)} 
\ge \frac{A_{X,\D}(E_m)}{S^g(V_\bu;E_m)} 
\ge \delta^g_{\IT}(X,\D;V_\bu). $$
Pick series $\varepsilon_i \to 0$, we have constants $m_i=m_i(\varepsilon_i)\in\IN$ as above. By Lemma \ref{Lemma: convergence of delta_m}, we conclude that $$\frac{A_{X,\D}(E_{m_i})}{S^g(V_\bu;E_{m_i})} \to \delta^g_{\IT}(X,\D;V_\bu),\, (i\to \infty).  $$ 

On the other hand, for $m\ge m_0$, the divisor $E_m$ is an lc place of $\delta_m D + (1-\delta_m) H$ where $D$ is a basis type $\IR$-divisor of $V^g_m$ compatible with $E_m$, and $H$ is a general effective divisor such that $H\sim_\IR-(K_X+\D)$. Hence $E_m$ is weakly special. 
\end{proof}

\begin{rmk} \rm
Using the argument of \cite{LXZ22}, one may further show that $\delta^g_\IT(X,\D)<1$ is minimized by a $G$-invariant weakly special divisor over $X$. 
\end{rmk}

\begin{cor}
If $\delta^g_G(X,\D)\ge 1$, then $(X,\D,\xi_0)$ is $\IT$-equivariantly $g$-weighted K-semistable,
\end{cor}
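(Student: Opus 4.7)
The plan is to proceed by contraposition, using the preceding theorem as a black box that converts a failure of $\IT$-equivariant weighted K-semistability into a minimizing sequence of $G$-invariant divisors over $X$. Suppose $(X,\D,\xi_0)$ is not $\IT$-equivariantly $g$-weighted K-semistable; by Definition~\ref{Definition: T-equivariantly weighted K-semistable}, there exists $v \in \Val^{\IT,\circ}_X$ with $A_{X,\D}(v) < S^g(R_\bu;v)$, so in particular $\delta^g_\IT(X,\D) < 1$.

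Next I would invoke the preceding theorem to extract a sequence $\{E_i\}$ of $G$-invariant weakly special divisors over $X$ satisfying
\begin{eqnarray*}
\frac{A_{X,\D}(E_i)}{S^g(R_\bu; E_i)} \longrightarrow \delta^g_\IT(X,\D) < 1.
\end{eqnarray*}
Each divisorial valuation $\ord_{E_i}$ is $G$-invariant, so it is included in the infimum defining $\delta^g_G(X,\D)$ (the natural $G$-equivariant analogue of Definition~\ref{Definition: local delta} with $Z=X$). This yields $\delta^g_G(X,\D) \le A_{X,\D}(E_i)/S^g(R_\bu; E_i)$ for every $i$, and letting $i \to \infty$ gives $\delta^g_G(X,\D) \le \delta^g_\IT(X,\D) < 1$, contradicting the hypothesis $\delta^g_G(X,\D) \ge 1$.

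The genuine obstacle has already been overcome by the preceding theorem: its content is that one can find $G$-invariant (not merely $\IT$-invariant) near-minimizers, which relies on the equivariant weighted Abban-Zhuang estimate (Lemma~\ref{Theorem: weighted AZ 2}) together with the passage from $S^g_m$ to $S^g$ provided by Lemma~\ref{Lemma: uniform bound of S^g_m/S^g} and Lemma~\ref{Lemma: convergence of delta_m}. Once this is in hand, the corollary reduces to a one-line comparison of two infima, so I do not expect any further difficulty.
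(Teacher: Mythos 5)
Your argument is correct and is essentially the paper's own proof: assume failure of $\IT$-equivariant weighted K-semistability, note $\delta^g_\IT(X,\D)<1$, invoke the preceding theorem to produce $G$-invariant (weakly special) divisors with $A_{X,\D}/S^g<1$, and contradict $\delta^g_G(X,\D)\ge 1$. The only cosmetic difference is that the paper extracts a single such divisor rather than phrasing it as a limit of the sequence, which changes nothing.
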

\begin{proof}
Otherwise, by the above theorem there exists a $G$-invariant prime divisor $E$ over $(X,\D)$ such that $\frac{A_{X,\D}(E)}{S^g(V_\bu;E)} < 1$, which contradicts $\delta^g_G(X,\D)\ge 1$. 
\end{proof}

\section{K\"ahler-Ricci solitons on Fano threefolds of №2.28 and №3.14}
\label{Section: Applications}
Let $(X, \D)$ be a log Fano pair of dimension $n$ with a $\IT$-action. We say that the $\IT$-action is of {\it complexity} $k$ if the $\IT$-orbit of a general point in $X$ is of codimension $k$. 
By \cite{ACC+}, we have the following K-unstable Fano threefolds with torus actions of complexity two, 
\begin{itemize}
    \item №2.26: blow up $V_5\subset \IP^6$ along line.
    \item №2.28: blow up $\IP^3$ along plane cubic.
    \item №3.14: blow up of $\IP^3$ along plane cubic curve and point that are not coplanar.
    \item №3.16: blow up of $V_7$ along proper transform via blow up $V_7 \rightarrow \IP^3$ of twisted cubic passing through blown up point.
\end{itemize}
We will show that every smooth Fano threefold in the family {\rm №2.28} and {\rm №3.14} admits a K\"ahler-Ricci soliton in this section. 

\subsection{Weighted K-polystability of №2.28}
\label{Subsection: №2.28}
Every smooth Fano threefold $X$ of family №2.28 is given by the blowup of $\IP^3_{u,x,y,z}$ along a smooth plane cubic curve $C=\{u=0, y^2z=x(x-z)(x-\lambda z)\}$ for $\lambda \neq 0,1$. We will write $v\coloneqq y^2z-x(x-z)(x-\lambda z)$ for simplicity. Let $\pi:X\to \IP^3$ be the natural morphism and $E_C\seq X$ be the exceptional divisor. There is a natural $\IG_a^3$-action and $\IG_m$-action on $X$, they are given by
\begin{eqnarray*}
\mu: \IG_a^3 \times \IP^3 \to \IP^3, &\quad& 
\big( (a,b,c), [u,x,y,z] \big) \mapsto [u, x+au, y+bu, z+cu], \\
\lam: \IG_m \times \IP^3 \to \IP^3, &\quad&
\big( t, [u,x,y,z] \big) \mapsto [t^{-1}u,x,y,z]. 
\end{eqnarray*}
The plane $H_u=\{u=0\}$ lies in the fixed locus of these group actions, hence they lift to $X$ naturally. Indeed, the identity component of the automorphism group of $X$ is $\Aut^0(X) = \IG_a^3 \rtimes \IG_m$, see \cite{ACC+}. Note that the $\IG_m$-action $\lam$ is induced by the divisorial valuation $\ord_{H_u}$, and $\lam^{-1}$ is induced by $\ord_{E_0}$, where $E_0$ is the exceptional divisor of the blowup of $\IP^3$ at $[1,0,0,0]$. Hence the moment polytope of the $\IG_m$-action is $[-1, 3]$, since $A_{\IP^3}(H_u)=1$ and $A_{\IP^3}(E_0)=3$ by \cite[Section 2.4 (5)]{Wang24}. 

In order to show the weighted K-polystability of $X$, we firstly need to determine the soliton candidate $\xi_0$. To achieve this, we construct an Okounkov body $\BO$ of $X$ compatible with the $\IG_m$-action. 
Consider the affine chart $U_{z} = \{z\neq 0\} \seq \IP^3$ and just assume that $z=1$. Then $X|_{U_z} \seq \IA^3_{u,x,y} \times \IP^1_{\zeta_0,\zeta_1}$ is defined by $\{\zeta_0v-\zeta_1u=0\}$. We further restrict to the affine chart $\{\zeta_1 \neq 0\}$, then $u = w\cdot v$, where $w\coloneqq\zeta_0/\zeta_1$. In this affine chart, the exceptional divisor $E_C=\{v=0\}$ and the strict transform of hyperplane $H_u$ is given by $\{w =0\}$, still denoted by $H_u$. Since $-K_X = \pi^* (-K_{\IP^3})-E_C$, we have 
\begin{eqnarray*}
H^0(X,-K_X) 
&=& w\cdot H^0(\IP^3,\mathcal{O}_{\IP^3}(3)) \oplus H^0(\IP^3,\mathcal{O}_{\IP^3}(1)) \\
&=& \la w u^3 \ra \oplus w u^2\cdot\la x,y,z  \ra \oplus w u \cdot \la x,y,z  \ra^2 \oplus  w \cdot \la x,y,z  \ra^3 \oplus \la x,y,z  \ra \\
&=& \la w^4v^3 \ra \oplus w^3v^2\cdot\la x,y,z  \ra \oplus w^2v\cdot \la x,y,z  \ra^2 \oplus  w \cdot \la x,y,z  \ra^3 \oplus \la x,y,z  \ra. 
\end{eqnarray*}
This decomposition is compatible with the $\IG_m$-action.
We take the admissible flag $\mathbb{A}^3_{w,y,x}\supseteq \{w=0\}\supseteq \{w=y=0\}\supseteq \{w=y=x=0\}$, which gives a faithful valuation $\mathfrak{v}: w\mapsto (1,0,0), y\mapsto (0,1,0), x\mapsto (0,0,1)$. We have $\nv(v) = (0,0,1)$. Thus
\begin{eqnarray*}
    w^4\cdot v^3 &\mapsto &(4,0,3);\\
    w^3\cdot v^2\cdot \langle x,y,z \rangle &\mapsto &(3,0,2), (3,1,2), (3,0,3);\\
    w^2\cdot v\cdot \langle x,y,z \rangle^2 &\mapsto &(2,0,1),(2,1,1),(2,0,2),(2,2,1),(2,1,2),(2,0,3);\\
    w\cdot \langle x,y,z \rangle^3&\mapsto &(1,0,0),(1,1,0),(1,0,1),(1,2,0),(1,1,1),(1,0,2),\\
    &&(1,3,0),(1,2,1),(1,1,2),(1,0,3);\\
    \langle x,y,z \rangle&\mapsto &(0,0,0),(0,1,0),(0,0,1).
\end{eqnarray*} 
Let $\BO_0\seq \IR^3_{w,y,x}$ be the convex polytope generated by these points. Computing volume we see that $\BO_0$ is an Okounkov body of $X$. In order to make the Okounkov body compatible with the $\IG_m$-action, we may shift the first coordinate by one such that the body lives in $-1\le w\le3$. We denote this shifted body by $\BO$, and the natural projection by $p:\BO\to \BP=[-1,3]$. Note that $\BO$ has vertices $\{(3,0,3), (0,0,0),(0,0,3),(0,3,0),(-1,0,0),(-1,1,0),(-1,0,1)\}$. 

From the equation $\Fut_g(1)=0$, we derive $0=\int_{\BO} w\cdot e^{-\xi_0\cdot w}\dif w \dif y \dif x = \int_{-1}^0 w\cdot e^{-\xi_0\cdot w}\cdot \frac{1}{2} (3+2w)^2\dif w + \int_0^3 w\cdot e^{-\xi_0\cdot w}\cdot \frac{1}{2} (3-w)^2 \dif w$. We can numerically solve $\xi_0 \approx  0.9377815610300645$.
 
\begin{thm}
\label{Theorem. stability of 2.28}
Every smooth Fano threefold $X=X_{2.28}$ admits a K\"ahler-Ricci soliton. 
\end{thm}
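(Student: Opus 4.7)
The plan is to apply the weighted Abban-Zhuang estimate (Theorem \ref{Theorem: weighted AZ}) with the $\IT$-invariant plt-type divisor taken to be the strict transform $H=H_u\seq X$ of the plane $\{u=0\}\seq\IP^3$ containing the cubic curve $C$. The divisor $H$ is preserved by the whole $\Aut^0(X)=\IG_a^3\rtimes\IG_m$ action, it is toric for the $\IG_m$-action, and the divisorial valuation $\ord_H$ corresponds to the endpoint $w=-1$ of the moment polytope $\BP=[-1,3]$. In particular, since $\xi_0$ is chosen so that $\Fut_g=0$ on $N_\IR$, one reads from the explicit Okounkov body $\BO$ constructed above that
\begin{equation*}
\frac{A_X(H)}{S^g(R_\bu;H)}=1.
\end{equation*}

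The main step is to verify the refined estimate
\begin{equation*}
\delta^{g}_{p,\IT}(H,\D_H;W_\bu)>1\qquad\text{for every closed point }p\in H,
\end{equation*}
where $W_\bu$ denotes the refinement of $R_\bu$ by $H$ and $\D_H$ is the different. Because $H$ is toric with respect to the $\IG_m$-factor and the $\IG_m$-weights are linear on $\BO$, formula (\ref{Formula: S^g 5}) identifies $S^g$ on the refinement with a weighted integral over $\BP$ of ordinary $S$-invariants on the fibres of $p:\BO\to\BP$; this reduces the computation of the refined weighted delta to weighted averages of delta invariants of pairs consisting of $H\cong\IP^2$ with a boundary supported on the cubic $C$, which can be bounded below by $1$ by a second round of Abban-Zhuang (taking $F'$ to be either the exceptional curve over a point on $C$ or a general line through $p\notin C$) together with standard log canonical threshold estimates for smooth cubics in $\IP^2$. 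Combining with $A_X(H)/S^g(R_\bu;H)=1$ and Theorem \ref{Theorem: weighted AZ} gives $\delta^g_\IT(X)\geq 1$, i.e. weighted K-semistability of $(X,\xi_0)$.

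To upgrade semistability to polystability, I will argue by contradiction using \cite[Lemma~4.14]{BLXZ23}: if $(X,\xi_0)$ is strictly weighted K-semistable then there is a $\IT$-invariant quasi-monomial valuation $v$ with $A_X(v)/S^g(R_\bu;v)=1$ that is not of the form $\wt_\xi$ for any $\xi\in N_\IR$. Since $H$ is the unique divisor computing the $\xi_0$-weighted threshold, any such $v$ must be supported on $H$, and inversion of adjunction together with the adjunction formula in (\ref{Formula: adjunction pair in AZ 2}) produces a $\IT$-invariant quasi-monomial valuation $v_0$ on $(H,\D_H)$ whose extension to $X$ equals $v$, with $A_X(v)=A_H(v_0)$ and $S^g(R_\bu;v)=S^g(W_\bu;v_0)$. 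Consequently
\begin{equation*}
1=\frac{A_X(v)}{S^g(R_\bu;v)}=\frac{A_H(v_0)}{S^g(W_\bu;v_0)}\geq\delta^g_{p,\IT}(H,\D_H;W_\bu)>1,
\end{equation*}
where $p=c_H(v_0)$, a contradiction.

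The expected main obstacle is the effective lower bound $\delta^g_{p,\IT}(H;W_\bu)>1$ at every point $p\in H$. The refinement $W_\bu$ is $\IN\times\IN$-graded over the toric surface $H\cong\IP^2$ and its Okounkov body has to be assembled from the slicewise description (\ref{Formula: S^g 3})--(\ref{Formula: S^g 5}). The estimate must be uniform as $p$ varies along $C$ versus off $C$, since the geometry of the pair $(H,\D_H)$ changes sharply along the cubic; this is where I anticipate most of the explicit computational work, and where the precise numerical value of the soliton candidate $\xi_0\approx 0.9377815610300645$ enters through the weight $g(w)=e^{-\xi_0 w}$.
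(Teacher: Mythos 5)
Your overall strategy coincides with the paper's (refine $R_\bu$ by $H_u$, show the refined weighted delta exceeds $1$ at every point of $H_u$, then contradict strict semistability via \cite[Lemma 4.14]{BLXZ23}), but the semistability step as you state it has a genuine gap. The weighted Abban--Zhuang estimate (Theorem \ref{Theorem: weighted AZ}) with $F=H_u$ only bounds $\delta^g_{Z,\IT}(X)$ for $\IT$-invariant subvarieties $Z\seq C_X(H_u)=H_u$, i.e.\ it controls $\IT$-invariant valuations whose center contains a point of $H_u$. Here $\IT=\IG_m$ is one-dimensional, and there are $\IT$-invariant valuations whose center is disjoint from $H_u$ --- for instance monomial valuations with distinct weights at the $\IG_m$-fixed point of $X$ lying over $[1,0,0,0]$ --- which are not of the form $\wt_\xi$, and weighted K-semistability requires $A_X-S^g\ge 0$ for these as well. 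The paper closes exactly this hole with the $G$-equivariant reduction of Section \ref{Section: G-Equivariant weighted K-stability}, $G=\Aut^0(X)=\IG_a^3\rtimes\IG_m$: the minimal $G$-invariant subvarieties all lie in $H_u$, so $\delta^g_G(X)\ge 1$ follows from the pointwise bounds on $H_u$, and the corollary there converts $\delta^g_G\ge 1$ into $\IT$-equivariant weighted K-semistability. You mention the group but never use it; without this (or some other treatment of centers away from $H_u$) the inference ``$\delta^g_{p,\IT}(H_u;W_\bu)>1$ for all $p\in H_u$ plus $A_X(H_u)/S^g(R_\bu;H_u)=1$ implies $\delta^g_\IT(X)\ge 1$'' does not follow.

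The polystability step also rests on an unjustified claim: ``$H$ is the unique divisor computing the threshold, hence the minimizing quasi-monomial $v$ must be supported on $H$'' is a non sequitur (a quasi-monomial $v$ with $A_X(v)=S^g(R_\bu;v)$ need not have center in $H_u$, and uniqueness of a divisorial minimizer is neither proved nor relevant), and inversion of adjunction does not by itself produce $v_0$ on $H_u$ with $A_X(v)=A_{H_u}(v_0)$ and $S^g(R_\bu;v)=S^g(W_\bu;v_0)$. What the paper actually uses is that $H_u$ is a Chow quotient for the $\IT$-action (via the auxiliary blowup $Y\to X$ and the $\IT$-equivariant map $\tau:Y\to H_u$), so $\Val^{\IT,\circ}_X\cong \Val^\circ_{H_u}\times N_\IR$; one first twists $v$ by some $\xi\in N_\IR$ so that it becomes the pull-back of some $v_0\in\Val^\circ_{H_u}$, and only then does the local product structure $U\cong U_0\times\IA^1$ give the two equalities. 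The twisting step, and why it preserves the equality $A=S^g$, is absent from your argument. Finally, the estimate $\delta^g_{p,\IT}(H_u;W_\bu)>1$ that you defer is where the margin is tight: the paper's minimum is $4/3.773902$, attained at flex points of $C$, and it is obtained only after refining by the $(2,1)$- and $(3,1)$-weighted blowups adapted to the tangency order of $C$ at $p$ (so that the strict transforms of $C$ and of the tangent line hit the exceptional curve at distinct points); an ordinary blowup at $p\in C$ together with ``standard lct estimates for smooth cubics'' does not obviously yield the bound, and the weight $g(w)=e^{-\xi_0 w}$ enters every integral, so this step cannot be reduced to unweighted results.
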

\begin{proof}
By the work of \cite{HL23} and \cite{BLXZ23}, we know that $X$ admits a K\"ahler-Ricci soliton if and only if $(X,\xi_0)$ is weighted K-polystable. We first show that it is weighted K-semistable. 

Note that $G=\Aut^0(X)\cong \IG^3_a \rtimes \IG_m$, and the minimal $G$-invariant subvarieties of $X$ are exactly the subvarieties of $H_u$. By $G$-equivariant weighted K-stability, it suffices to show $\delta^g_{p,\IT}(X)\ge 1$ for any closed point $p\in H_u$ (note that $\delta^g_{p,\IT}(X)\le 1$ by the choice of $\xi_0$). In the first step, we take refinement of $R_\bu$ by $H_u$ and denote it by $W_\bu$. In the second step, let $C=H_u\cap E_C$. There are three different cases of $p\in H_u$. Case (A) if $p\notin C$, we choose refinement by a line $l\seq H_u$ passing through $p$; case (B), if $p\in C$ and the tangent line $\{X=0\}$ of $C$ at $p$ has multiplicity two at $p$ when restricting to $C$, we choose refinement by the exceptional line $E$ of the $(2,1)$-weighted blowup of $H_u$ at the point $p$, where $\wt(X)=2, \wt(Y)=1$ and $(X,Y)$ is the local coordinate of $\IP^2$ at $p$; case (C), if $p\in C$ and the tangent line is of multiplicity three, we choose refinement by the exceptional line $E$ of the $(3,1)$-weighted blowup. Denote the second step refinement by $W^l_\bu$. Let $\widetilde{C}$ be the strict transform of $C$ in the above blowup. Finally, we conclude by the weighted Abban-Zhuang estimate that
$$\delta^g_{p,\IT}(X)\ge \min\{1, \delta^g_p(\IP^2; W_\bu)\},$$
where $\delta^g_p(\IP^2; W_\bu) \ge \frac{4}{3.773902}>1$ for any $p\in \IP^2 = H_u$, see Section \ref{Subsection: computing S^g of №2.28}. 

We next prove the weighted K-polystability of $(X,\xi_0)$. Let $Y\to X$ be the blowup of a point outside $H_u\cup E$. Then $Y$ admits a $\IT$-equivariant morphism $\tau:Y\to H_u$, which is the composition of the blow-down map of $E$ and the canonical map of $\IP^3$ with one point blowup to $H_u$. Hence $H_u$ can be viewed as a Chow quotient of $X$ by the $\IT$-action. Hence $\Val^{\IT,\circ}_X\cong \Val^{\circ}_{H_u}\times N_\IR $. Assume that $(X,\xi_0)$ is not weighted K-polystable. By \cite[Lemma 4.14]{BLXZ23}, there exist a $\IT$-invarant quasi-monimial valuation $v$ on $X$ which is not of the form $\wt_\xi$ such that $\delta^g(v)=1$. Up to twisting by some $\xi\in N_\IR$, we may assume that $v$ is the pull-back of some $v_0\in \Val^{\circ}_{H_u}$. 
For any point $q\in H_u$, there exists open neighbourhood $q\in U_0\seq H_u$ and $\IT$-invariant open neighbourhood $U\seq X$ such that $i: U\cong U_0\times \IA^1$ satisfying $\tau|_U = \pr_1 \circ i$. 
Let $q$ be the generic point of $C_{H_u}(v_0)$, then we have $$A_{H_u}(v_0) = A_{U_0}(v_0) = A_{U}(\tau^*v_0) = A_X(v). $$ 
On the other hand, the filtration induced by $v$ on $R_\bu$ and by $v_0$ on $W_\bu$ are the same. Hence $S^g(R_\bu;v)=S^g(W_\bu;v_0)$, and we get 
$$1=\frac{A_X(v)}{S^g(R_\bu;v)}=\frac{A_{H_u}(v_0)}{S^g(W_\bu;v_0)} \ge \delta^g_p(\IP^2; W_\bu)>1, $$
which is a contradiction. 
\end{proof}

\subsection{Computing $S^g$ of №2.28} 
\label{Subsection: computing S^g of №2.28}

Recall that $H_u$ is the strict transform of $\{u=0\}\seq \IP^3$ and $-K_X=\CO_{\IP^3}(4)-E_C$. We have decomposition $\CO_{\IP^3}(1)=H_u+E_C$ on $X$. Hence $-K_X-(w+1) H_u$ is ample for $w\le 0$, and has fixed part $w E_C$ and movable part $\CO_{\IP^3}(3-w)$ when $0< w < 3$. Since $\CO_{\IP^3}(1)|_{H_u}=\CO_{\IP^2}(1), E_C|_{H_u}=\CO_{\IP^2}(3)$, we have
\begin{eqnarray*}
W_{(1,w)} = 
\left\{ \begin{array}{ll}
H^0\Big(\IP^2, \CO(3+2w)\Big) 
& -1\le w < 0,\\
wC+H^0\Big(\IP^2, \CO(3-w)\Big) 
& 0\le w\le 3. 
\end{array} \right. 
\end{eqnarray*} 
This is the case of Example \ref{Example: formula when Q=P}. Now the question is reduced to the plane $H_u\cong \IP^2$. 

{\bf Case (A).} If $p\notin C$ and $l$ is a line on $\IP^2$. Then
\begin{eqnarray*}
\CF^{(y)}_l W_{(1,w)} = 
\left\{ \begin{array}{ll}
yl+H^0\Big(\IP^2, \CO(3+2w-y)\Big) 
& -1\le w < 0, 0\le y\le 3+2w\\
yl+wC+H^0\Big(\IP^2, \CO(3-w-y)\Big) 
& 0\le w\le 3, 0\le y\le 3-w. 
\end{array} \right. 
\end{eqnarray*} 
By (\ref{Formula: S^g 4}) we have 
\begin{eqnarray*} 
S^g(W_\bu; l) 
&=&\frac{1}{2!} 
\frac{1}{\Bv^g} 
\int_\IR
\int^3_{-1} g(w)
\vol(\CF^{(y)}W_{(1,w)}) 
 \dif w \dif y \\
&=& \frac{1}{2\Bv^g} 
\Big(
\int_{-1}^0 g(w) \int_0^{3+2w} (3+2w-y)^2 \dif y  \dif w 
+ \int_0^3 g(w) \int_0^{3-w} (3-w-y)^2 \dif y  \dif w 
\Big) \\
&\approx& 0.773902 \,\, <1, 
\end{eqnarray*}
where $g(w) = e^{-\xi_0 w}$ and $\Bv^g \approx 5.61542$. Hence $A_X(l)/S^g(W_\bu; l) >1$. 

Moreover, let $p_1,p_2,p_3\in C$ be the points that $l$ and $C$ intersect at (we just assume that $l$ intersects $C$ transversally). Then 
\begin{eqnarray*}
W^l_{(1,w,y)} = 
\left\{ \begin{array}{ll}
H^0\Big(\IP^1, \CO(3+2w-y)\Big) 
& -1\le w < 0, 0\le y\le 3+2w,\\
w(p_1+p_2+p_3)+H^0\Big(\IP^1, \CO(3-w-y)\Big) 
& 0\le w\le 3, 0\le y\le 3-w. 
\end{array} \right. 
\end{eqnarray*} 
Since $p\notin C$, we have 
\begin{eqnarray*}
\CF^{(x)}_p W^l_{(1,w,y)} = 
\left\{\begin{array}{ll}
xp+H^0\Big(\IP^1, \CO(3+2w-y-x)\Big) 
& (w,y,x)\in \D_-,\\
xp+w(p_1+p_2+p_3)+H^0\Big(\IP^1, \CO(3-w-y-x)\Big) 
& (w,y,x)\in \D_+. 
\end{array} \right. 
\end{eqnarray*} 
where $\D_-=\{-1\le w<0, 0\le y\le 3+2w, 0\le x\le 3+2w-y\}$ and $\D_+ = \{0\le w\le 3, 0\le y\le 3-w, 0\le x\le 3-w-y\}$. Hence 
\begin{eqnarray*} 
S^g(W^l_\bu; p) 
&=&\frac{1}{1!} 
\frac{1}{\Bv^g} 
\int^3_{-1} g(w)
\Big(
\int_{\D_-\cup \D_+}
\vol(\CF^{(x)}W_{1,w,y}) 
\dif x \dif y \Big)
 \dif w  \\
&=& \frac{1}{\Bv^g} 
\Big(
\int_{-1}^0 g(w) 
\int_0^{3+2w}
\int_0^{3+2w-y}
 (3+2w-y-x) \dif x  \dif y  \dif w  \\
&&\quad 
+\int_0^3 g(w) 
\int_0^{3-w} 
\int_0^{3-w-y}
(3-w-y-x) \dif x  \dif y  \dif w 
\Big) \\
&\approx& 0.773902 \,\, <1, 
\end{eqnarray*}
We conclude that $A_{\IP^1}(p)/S^g(W^l_\bu; p)>1$. 


\begin{rmk} \rm
We remark here that the two $S^g$ computed above are the same. One may show that $S^g(W_\bu;\CF_l)= \int_\BO yg(w)\cdot\LE = S^g(W^l_\bu, p)$. 
\end{rmk}

{\bf Case (B).} If $p\in C$, and the tangent line is of multiplicity $2$ at $p$, and $E$ is the exceptional line of the $(2,1)$-weighted blowup $\tilde{\IP}^2\to \IP^2$. We denote by $L=\{X=0\}$ where $(X,Y)$ is a local coordinate of $\IP^2$ at $p$ such that $\ord_E(X)=2, \ord_E(Y)=1$. Let $P_0$ be the unique singular point of $\tilde{\IP}^2$, and $P_1=E\cap \tilde{L}, P_2=E\cap\tilde{C}$. The three points $P_0, P_1, P_2$ are different. For each $w, t$, the linear system $\CF^{(t)}_E W_{(1,w)}$ admits Zariski decomposition 
$$\CF^{(t)}_E W_{(1,w)} 
= 
N(\CF^{(t)}_E W_{(1,w)}) + 
H^0(\IP^2, P(\CF^{(t)}_E W_{(1,w)})). $$
For $-1\le w\le0$, we have 
\begin{eqnarray*}
N(\CF^{(t)}_E W_{(1,w)}) = 
\left\{ \begin{array}{ll}
tl
& 0\le t \le 3+2w, \\
tl+(t-3-2w)\tilde{L} 
& 3+2w\le t\le 6+4w, 
\end{array} \right. 
\end{eqnarray*}
and for $0\le w\le 3$,  
\begin{eqnarray*} 
N(\CF^{(t)}_E W_{(1,w)}) = 
\left\{ \begin{array}{ll}
2wl+ w\tilde{C}
& 0\le t \le 2w, \\
tl+w\tilde{C} 
& 2w\le t\le 3+w, \\
tl+w\tilde{C}+(t-3-w)\tilde{L} 
& 3+w\le t \le 6. 
\end{array} \right. 
\end{eqnarray*}
Then the positive part follows as 
\begin{eqnarray*}
P(\CF^{(t)}_E W_{(1,w)}) = 
\left\{ \begin{array}{ll}
\CO(3+2w) - N(\CF^{(t)}_E W_{(1,w)})
& -1\le w \le 0, \\
\CO(3-w)+w(2l+\tilde{C})-N(\CF^{(t)}_E W_{(1,w)})
& 0\le w\le 3. 
\end{array} \right. 
\end{eqnarray*}
The volume $\vol(\CF^{(t)}_E W_{(1,w)}) = P(\CF^{(t)}_E W_{(1,w)})^2$. Hence for $-1\le w\le 0$
\begin{eqnarray*}
\vol(\CF^{(t)}_E W_{(1,w)}) = 
\left\{\begin{array}{ll}
(3+2w)^2-\frac{1}{2}t^2
& 0\le t \le 3+2w, \\
(3+2w)^2-\frac{1}{2}t^2 + (t-3-2w)^2
& 3+2w\le t\le 6+4w, 
\end{array} \right. 
\end{eqnarray*}
and for $0\le w\le 3$, we have
\begin{eqnarray*}
\vol(\CF^{(t)}_E W_{(1,w)}) = 
\left\{ \begin{array}{ll}
(3-w)^2
& 0\le t \le 2w, \\
(3-w)^2 - \frac{1}{2}(t-2w)^2
& 2w\le t\le 3+w, \\
(3-w)^2 - \frac{1}{2}(t-2w)^2
+(t-3-w)^2
& 3+w\le t \le 6. 
\end{array} \right. 
\end{eqnarray*}
Hence we have 
\begin{eqnarray*} 
S^g(W_\bu; E) 
&=& \frac{1}{2\Bv^g} 
\Big(
\int_{-1}^0 g(w) 
\int_0^{6+4w} \vol(\CF^{(t)}_EW_{(1,w)}) 
\dif t 
\dif w  \\
&& \quad +  
\int_{0}^3 g(w) 
\int_0^{6} \vol(\CF^{(t)}_EW_{(1,w)}) 
\dif t 
\dif w  
\Big)\\
&\approx& 2.773902 \,\, < 3 \,\, 
=\,\, A_{\IP^2}(E), 
\end{eqnarray*}
that is, $A_{\IP^2}(E)/S^g(W_\bu; E)>1$. 

\begin{rmk} \rm
\label{Remark: 2.28 compute S^g using G}
There is another way to compute such $S^g$ using formula (\ref{Formula: S^g 2}). The concave transform of $\CF_E$ on $\BO$ is $G(w,y,x)=2x+y$, then $S^g(W_\bu;E)= \frac{1}{\Bv^g}\int_\BO (2x+y)\LE^g \approx 2.773902$. But we do not know how to get the further step refinement in this way. 
\end{rmk}

With the above calculation, taking the quotient of $\CF_E$ we get the second step refinement
\begin{eqnarray*}
W^E_{(1,w,t)} = 
\left\{ \begin{array}{ll}
H^0\Big(\IP^1, \CO(\frac{t}{2})\Big) 
& -1\le w < 0, 0\le t \le 3+2w, \\
(t-3-2w)P_1+H^0\Big(\IP^1, \CO(3+2w-\frac{t}{2})\Big) 
& -1\le w < 0, 3+2w\le t \le 6+4w, \\
wP_2 +H^0\Big(\IP^1, \CO(\frac{t}{2} -w)\Big) 
& 0\le w\le 3, 2w\le t\le 3+w, \\
wP_2 +(t-3-w)P_1 +H^0\Big(\IP^1, \CO(3-\frac{t}{2})\Big) 
& 0\le w\le 3, 3+w\le t\le 6. 
\end{array} \right. 
\end{eqnarray*}
Note that $\Diff_l(0) = \frac{1}{2} P_0$. Hence we have 
\begin{eqnarray*}
\delta^g(\IP^1;W_\bu^E)
=\min\Big\{\frac{1}{2 S^g(W^E_\bu;P_0)}, 
\frac{1}{S^g(W^E_\bu;P_1)}, 
\frac{1}{S^g(W^E_\bu;P_2)}\Big\}. 
\end{eqnarray*}
We first consider the filtration of $W^E_\bu$ induced by $P_0$. The fiberwise volume is clear in this case
\begin{eqnarray*}
\vol(\CF^s_{P_0}W^E_{(1,w,t)}) = 
\left\{ \begin{array}{ll}
\frac{t}{2}-s
& -1\le w < 0, 0\le t \le 3+2w,\\
3+2w-\frac{t}{2}-s 
& -1\le w < 0, 3+2w\le t \le 6+4w,\\
\frac{t}{2} -w-s
& 0\le w\le 3, 2w\le t\le 3+w,\\
3-\frac{t}{2}-s
& 0\le w\le 3, 3+w\le t\le 6. 
\end{array} \right. 
\end{eqnarray*}
Hence
\begin{eqnarray*} 
S^g(W^E_\bu; P_0) 
&=& \frac{1}{\Bv^g} 
\Big[
\int_{-1}^0 g(w) 
\Big(
\int_0^{3+2w}\frac{t^2}{8}\dif t+
\int_{3+2w}^{6+4w}\frac{1}{2}
(3+2w-\frac{t}{2})^2 \dif t
\Big)  \dif w  \\
&& \quad
+\int_0^3 g(w) 
\Big( 
\int_{2w}^{3+w} \frac{1}{2}
(\frac{t}{2}-w)^2 \dif t+
\int_{3+w}^6 \frac{1}{2}
(3-\frac{t}{2})^2 \dif t
\Big)  \dif w  
\Big]\\
&\approx& 0.386951 \,\, 
< \frac{1}{2} \,\, . 
\end{eqnarray*}
The filtrations induced by $P_1$ and $P_2$ are slightly different from that of $P_0$, we conclude that 
\begin{eqnarray*} 
S^g(W^E_\bu; P_1) -
S^g(W^E_\bu; P_0) 
&=& \frac{1}{\Bv^g} 
\Big[
\int_{-1}^0 g(w) 
\int_{3+2w}^{6+4w}
(3+2w-\frac{t}{2})(t-3-2w) \dif t
 \dif w  \\
&& \quad
+\int_0^3 g(w) 
\int_{3+w}^6 
(3-\frac{t}{2})(t-3-w) \dif t
 \dif w  
\Big] \,\,
\approx\,\, 0.386951, \\
S^g(W^E_\bu; P_2) -
S^g(W^E_\bu; P_0) 
&=& \frac{1}{\Bv^g} 
\int_0^3 g(w) w 
\Big(
\int_{2w}^{3+w}
(\frac{t}{2} - w) \dif t \\
&& \quad
+ \int_{3+w}^6 
(3-\frac{t}{2}) \dif t
\Big)
 \dif w  \,\,
\approx\,\, 0.226098. 
\end{eqnarray*}
Hence $\delta^g(\IP^1;W_\bu^E) \approx \frac{1}{0.773902}$. 

\begin{rmk}\rm 
We have $S^g(W^E_\bu; P_1)=2 S^g(W^E_\bu; P_0)$ and $S^g(W^E_\bu; P_2)+S^g(W^E_\bu; P_0)=1$. 
\end{rmk}

{\bf Case (C).} If $p\in C$ and $L=\{X=0\}$ tangent to $C$ of multiplicity $3$ at $p$, where $(X,Y)$ is a local coordinate of $\IP^2$ at $p$. Let $E$ be the exceptional curve of $(3,1)$-weighted blowup. With the same argument in the previous case or Remark \ref{Remark: 2.28 compute S^g using G}, we have $S^g(W_\bu;E)=\int_{\BO}(3x+y)\LE^g \approx 3.773902$. Hence $A_{\IP^2}(E)/S^g(W_\bu;E)\approx\frac{4}{3.773902}$, which is less than all the $A/S^g$ computed above. We will see that $\delta^g(X)$ is minimized by $E$. Firstly we have 
\begin{eqnarray*}
W^E_{(1,w,t)} = 
\left\{ \begin{array}{ll}
H^0\Big(\IP^1, \CO(\frac{t}{3})\Big) 
& -1\le w < 0, 0\le t \le 3+2w, \\
\frac{1}{2}(t-3-2w)P_1+
H^0\Big(\IP^1, \CO(\frac{1}{2}(3+2w-\frac{t}{3}))\Big) 
& -1\le w < 0, 3+2w\le t \le 9+6w, \\
wP_2 +H^0\Big(\IP^1, \CO(\frac{t}{3} - w)\Big) 
& 0\le w\le 3, 3w\le t\le 3+2w, \\
wP_2 +\frac{1}{2}(t-3-2w)P_1 +
H^0\Big(\IP^1, \CO(\frac{1}{2}(3-\frac{t}{3}))\Big) 
& 0\le w\le 3, 3+2w\le t\le 9. 
\end{array} \right. 
\end{eqnarray*}
Then for any point $P\ne P_1$ or $P_2$, we have 
\begin{eqnarray*} 
S^g(W^E_\bu; P) 
&=& \frac{1}{\Bv^g} 
\Big[
\int_{-1}^0 g(w) 
\Big(
\int_0^{3+2w}\frac{1}{2}(\frac{t}{3})^2\dif t+
\int_{3+2w}^{9+6w}\frac{1}{2}
(\frac{1}{2}(3+2w-\frac{t}{3}))^2 \dif t
\Big)  \dif w  \\
&& \quad
+\int_0^3 g(w) 
\Big( 
\int_{3w}^{3+2w} \frac{1}{2}
(\frac{t}{3}-w)^2 \dif t+
\int_{3+2w}^9 \frac{1}{2}
(\frac{1}{2}(3-\frac{t}{3}))^2 \dif t
\Big)  \dif w  
\Big] \\
&\approx& 0.257967, 
\end{eqnarray*} 
\begin{eqnarray*} 
S^g(W^E_\bu; P_1) -
S^g(W^E_\bu; P) 
&=& \frac{1}{\Bv^g} 
\Big[
\int_{-1}^0 g(w) 
\int_{3+2w}^{9+6w}
\frac{1}{2}(3+2w-\frac{t}{3})\cdot
\frac{1}{2}(t-3-2w)
\dif t
\dif w  \\
&& \quad
+\int_0^3 g(w) 
\int_{3+2w}^9 
\frac{1}{2}(3-\frac{t}{3}) \cdot
\frac{1}{2}(t-3-2w)
\dif t
\dif w  
\Big]\\
&\approx& 0.515935,\\
S^g(W^E_\bu; P_2) -
S^g(W^E_\bu; P) 
&=& \frac{1}{\Bv^g} 
\int_0^3 g(w) w 
\Big( 
\int_{3w}^{3+2w} 
(\frac{t}{3}-w) \dif t+
\int_{3+2w}^9 
\frac{1}{2}(3-\frac{t}{3}) \dif t
\Big)  \dif w  \\
&\approx& 0.226098. 
\end{eqnarray*}

\begin{rmk}\rm 
We have $S^g(W^E_\bu; P_1)=3 S^g(W^E_\bu; P)$ and $S^g(W^E_\bu; P_2)+2S^g(W^E_\bu; P)=1$. 
\end{rmk}

Note that $\Diff_l(0) = \frac{2}{3} P_0$, where $P_0$ is the unique singular point of the $(3,1)$-weighted blowup of $\IP^2$. Hence we have 
\begin{eqnarray*}
\delta^g(\IP^1;W_\bu^E)
=\min\Big\{\frac{1}{3 S^g(W^E_\bu;P_0)}, 
\frac{1}{S^g(W^E_\bu;P_1)}, 
\frac{1}{S^g(W^E_\bu;P_2)}\Big\}
\approx \frac{1}{0.773902}. 
\end{eqnarray*}

We conclude that $\delta^g(\IP^2; W_\bu) \approx \frac{4}{3.773902}$ is minimized by the exceptional line $E$ of the $(3,1)$-weighted blowup of $\IP^2$ at $p\in C$ where the tangent line has multiplicity $3$.

\subsection{Weighted K-polystability of №3.14} 
Any smooth Fano threefold $X$ in №3.14 is obtained by blowing up a Fano threefold $X_0$ in №2.28 at one point outside the plane containing $C$. We use the same notions as Section \ref{Subsection: №2.28}. We denote by $\hat{C}=\{y^2z-x(x-z)(x-\lam z)=0\}$ the cone over the curve $C$ in $\IP^3$, by $\hat{C}_{0,C}$ the strict transform of $\hat{C}$ via the two step blowups, and by $E_C, E_0$ the exceptional divisor of blowing up $C$ and the point $[1,0,0,0]$ respectively. 

We compute $R_\bu=R(X,-K_X)=R(X,\CO(4)-E_C-2E_0)$ on the affine open chart $X\setminus (\hat{C}_{0,C}\cup H_z)$. Then $u=w\cdot v, x=x_0\cdot z, y=y_0\cdot z$, and we have $E_0=\{z=0\}, E_C=\{v=0\}$. Hence 
\begin{eqnarray*}
H^0(X,-K_X) 
&=& H^0(X, -K_{X_0}-2E_0) \\
&=& \CF^2_{E_0}\big(z^3 H^0(\CO(1)) \oplus w H^0(\CO(3))\big)\cdot z^{-2} \\
&=& z^2 \la 1,x_0,y_0\ra \oplus wz\cdot \la 1,x_0,y_0\ra^3 \oplus w^2v\la 1,x_0,y_0\ra^2. 
\end{eqnarray*}
We take the faithful valuation by $\nv:w\mapsto (1,0,0), y_0\mapsto (0,1,0), x_0\mapsto (0,0,1)$, and we get a convex body by these sections. To make it is compatible with the moment polytope, we shift the first coordinate by one and denote the body by $\BO$, which is spanned by vertices $$(-1,0,0), (-1,0,1), (-1,1,0), (0,0,0), (0,0,3), (0,3,0), (1,0,1), (1,0,3), (1,2,1).$$ The volume of $\BO$ is $\frac{16}{3}$. Hence $\BO$ is a Okounkov body of $X$ since $(-K_X)^3=32$.  

Solving $\Fut_g(1)=0$, we get the soliton candidate numarically as $\xi_0 \approx 0.5265255550640977$. 

\begin{thm}
\label{Theorem: stability of 3.14}
Every smooth Fano threefold $X=X_{3.14}$ admits a K\"ahler-Ricci soliton. 
\end{thm}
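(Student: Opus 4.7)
The plan is to follow the strategy of Theorem \ref{Theorem. stability of 2.28}. By \cite{HL23} and \cite{BLXZ23}, a K\"ahler--Ricci soliton on $X$ exists if and only if $(X,\xi_0)$ is weighted K-polystable, so the task is to establish this stability. By the $G$-equivariant reduction of Section \ref{Section: G-Equivariant weighted K-stability}, it suffices to prove $\delta^g_{p,\IT}(X)\ge 1$ for every closed point $p$ in a minimal $G$-invariant subvariety, where $G=\Aut^0(X)$. Because the blown-up point $p_0=[1,0,0,0]$ is fixed by the $\IG_m$-subgroup but moved by the $\IG_a^3$-subgroup of $\Aut^0(X_{2.28})=\IG_a^3\rtimes\IG_m$ (via $(a,b,c)\cdot p_0=[1,a,b,c]$), one has $G=\IT=\IG_m$ and no additional outer reduction is available.

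Next, identify the minimal $G$-invariant subvarieties. In the affine chart $u=1$ near $p_0$ the action reads $(x,y,z)\mapsto(tx,ty,tz)$, so the weights on $T_{p_0}\IP^3$ are all $+1$ and $\IT$ acts trivially on $E_0\cong\IP^2$. Hence $\mathrm{Fix}(\IT)=H_u\cup E_0$ and the minimal $G$-invariant subvarieties are precisely the closed points of this union. Both $H_u$ and $E_0$ are toric divisors on $X$, with $\ord_{H_u}=\wt_{-1}$ and $\ord_{E_0}=\wt_{+1}$, corresponding to the two vertices of the moment polytope $\BP=[-1,1]$. Since $\xi_0$ is the soliton candidate (the unique minimizer of the strictly convex functional $\BH$), one obtains $A_X(H_u)/S^g(R_\bu;H_u)=A_X(E_0)/S^g(R_\bu;E_0)=1$, so both divisors serve as first-step refinement divisors for the weighted Abban--Zhuang estimate (Theorem \ref{Theorem: weighted AZ}).

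For $p\in H_u$, refine $R_\bu$ by $H_u$ to obtain a multi-graded linear series $W_\bu$ on $H_u\cong\IP^2$. Because $p_0\notin H_u$, the $-2E_0$ summand of $-K_X=4H-E_C-2E_0$ contributes nothing on $H_u$, and the fiberwise descriptions of $W_{(1,w)}$ take the same shape as in Section \ref{Subsection: computing S^g of №2.28}, now with $w\in[-1,1]$ and weight $g(w)=e^{-\xi_0 w}$ for $\xi_0\approx 0.5265$. Run the same three-case split---(A) $p\notin C$, (B) $p\in C$ with tangent of multiplicity $2$, (C) $p\in C$ with tangent of multiplicity $3$---and a second refinement by a line or a $(2,1)$- or $(3,1)$-weighted exceptional curve reduces to verifying $\delta^g_p(\IP^2;W_\bu)>1$ after reworking the Zariski-decomposition integrals with the new $\xi_0$ and $w$-interval. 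For $p\in E_0$, refine $R_\bu$ by $E_0$ to obtain $W'_\bu$ on $E_0\cong\IP^2$. The strict transform $\hat C_{0,C}$ of the cone $\hat C=\{y^2z=x(x-z)(x-\lam z)\}$ (whose vertex is $p_0$) meets $E_0$ in the projectivized tangent cone, a plane cubic $C_0$ isomorphic to $C$, so the identical three-case analysis applies on $E_0$ with $C_0$ playing the role of $C$, giving $\delta^g_p(\IP^2;W'_\bu)>1$. Combining the two cases proves weighted K-semistability of $(X,\xi_0)$.

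To upgrade to weighted K-polystability, argue exactly as in the proof of Theorem \ref{Theorem. stability of 2.28}. If $(X,\xi_0)$ were strictly weighted K-semistable, \cite[Lemma 4.14]{BLXZ23} would furnish a $\IT$-invariant quasi-monomial valuation $v\ne\wt_\xi$ with $A_X(v)=S^g(R_\bu;v)$. The Chow quotient of $X$ by $\IT$ is realized as $\IP^2$ via the equivariant projection $\tau\colon X\to\IP^2\cong H_u$ induced by the linear projection $\IP^3\dashrightarrow H_u$ from $p_0$ (the second blow-up already resolves the indeterminacy); after twisting by a suitable $\xi\in N_\IR$, $v$ is the pull-back of a valuation $v_0$ on $H_u$ (or on $E_0$ if $c_X(v)\subseteq E_0$, via the companion identification $E_0\cong H_u$ given by $\tau$), and local triviality of $\tau$ near the generic point of $C_{H_u}(v_0)$ yields $A_X(v)=A_{H_u}(v_0)$ and $S^g(R_\bu;v)=S^g(W_\bu;v_0)$. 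The chain $1=A/S^g\ge\delta^g_p(\IP^2;W_\bu)>1$ then gives a contradiction. The main technical obstacle will be the case-by-case reworking of the Zariski-decomposition integrals on both $H_u$ and $E_0$ with the new $\xi_0\approx 0.5265$ and truncated moment polytope $[-1,1]$, and the careful verification that the local-triviality step of the polystability descent works near both fixed components of the $\IT$-action.
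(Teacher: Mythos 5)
Your proposal follows essentially the same route as the paper's proof: the same reduction to weighted K-polystability via \cite{HL23,BLXZ23}, the observation that the extra blowup kills the $\IG_a^3$-factor so that $G=\Aut^0(X)=\IG_m$ and the relevant points lie on $H_u\cup E_0$, refinements of $R_\bu$ by the two toric divisors $H_u$ and $E_0$ (each with $A=S^g=1$) followed by the same three-case second-step refinements, and the same Chow-quotient/twisting argument for upgrading semistability to polystability. The only items you leave unverified are the explicit $S^g$-integrals with $\xi_0\approx 0.5265$ on the moment polytope $[-1,1]$, which is exactly what the paper supplies in Section \ref{Subsection: computing S^g of №3.14}, so your outline matches the paper's argument in all essential respects.
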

\begin{proof}
We prove using the same argument as Theorem \ref{Theorem. stability of 2.28}. Note that the one step more blowup kills the $\IG_a$-part of the automorphism of $X_0$ and hence $G=\Aut^0(X)=\IG_m$. The minimal $G$-orbits of $X$ are exactly the closed subvarieties of $H_u$ and $E_0$. We shall take refinements by the toric divisors $H_u$ and $E_0$ on $X$ respectively. Let $W_\bu$ be the refinement of $R_\bu=R(-K_X)$ by $H_u$. For any point $p\in H_u$, we take $l$ be a line passing through $p$ if $p\ne C$, the $(k,1)$ blowup of $p$ if $p\in C$ and the tangent line $L$ of $C$ at $p$ has multiplicity $k$ at $p$. It suffices to show $\delta_l = \frac{A_{\IP^2}(l)}{S^g(W_\bu;l)}>1,\delta_p = \delta^g(\IP^1, \D_l; W^E_\bu) >1$, see Section \ref{Subsection: computing S^g of №3.14}. 

For points $p\in E_0$, the computation of $\delta_l$ and $\delta_p$ is totally the same as $p\in H_u$. The values of invariants are also the same. Hence the same argument of Theorem \ref{Theorem. stability of 2.28} shows that $X$ is weighted K-polystable. 
\end{proof}

\subsection{Computing $S^g$ of №3.14} 
\label{Subsection: computing S^g of №3.14}
Firstly we have
\begin{eqnarray*}
W_{(1,w)} = 
\left\{ \begin{array}{ll}
H^0\Big(\IP^2, \CO(3+2w)\Big) 
& -1\le w < 0,\\
wC+H^0\Big(\IP^2, \CO(3-w)\Big) 
& 0\le w\le 1. 
\end{array} \right. 
\end{eqnarray*} 
Note the formula of $S^g$ remains the same with the computation of №2.28 except we change the integration interval $0\le w \le 3$ into $0 \le w \le 1$. Here we list the results below:

{\bf (A).} If $p \notin C$, then we choose a general line $l$ on $H_u$ passing through $p$. Then 
$$S^g(W_{\bu};l) = S^g(W^{E}_{\bu};p) \approx 0.806338. $$  
Hence 
$\delta^g_p(\IP^2, W_\bu)\ge \min\{
\frac{1}{S^g(W_\bu; l)}, 
\frac{1}{S^g(W^E_\bu; p)} \} >1; $

{\bf (B).} If $p\in C$ and the tangent line is of multiplicity $2$, let $E$ be the exceptional line of the $(2,1)$-weighted blowup $\tilde{\IP}^2\to \IP^2$. Denote by $P_0$ the singular point of $\tilde{\IP}^2$ and $P_1=\tilde{L}\cap E, P_2=\tilde{C} \cap E$. We know that $P_0, P_1$, and $P_2$ are different points. Then
\begin{eqnarray*}
S^g(W_{\bu};E) \approx 2.806338, 
&&S^g(W^{E}_{\bu};P_0) \approx 0.403169, \\
S^g(W^{E}_{\bu};P_1) \approx 0.806338, 
&&S^g(W^{E}_{\bu};P_2) \approx 0.596831. 
\end{eqnarray*}
Hence $\delta^g_p(\IP^2, W_\bu)\ge \min\{
\frac{3}{S^g(W_\bu; E)}, 
\frac{1}{2S^g(W^E_\bu; P_0)}, 
\frac{1}{S^g(W^E_\bu; P_1)}, 
\frac{1}{S^g(W^E_\bu; P_2)}\} >1; $

{\bf (C).} If $p\in C$ and the tangent line is of multiplicity $3$, let $E$ be the exceptional line of the $(3,1)$-weighted blowup $\tilde{\IP}^2\to \IP^2$. Denote by $P_0$ the singular point of $\tilde{\IP}^2$ and $P_1=\tilde{L}\cap E, P_2=\tilde{C} \cap E$. The points $P_0, P_1, P_2$ are different. Then
\begin{eqnarray*}
S^g(W_{\bu};E) \approx 3.806338, 
&&S^g(W^{E}_{\bu};P_0) \approx 0.268799, \\
S^g(W^{E}_{\bu};P_1) \approx 0.806338, 
&&S^g(W^{E}_{\bu};P_2) \approx 0.462442. 
\end{eqnarray*}
Hence $\delta^g_p(\IP^2, W_\bu)\ge \min\{
\frac{4}{S^g(W_\bu; E)}, 
\frac{1}{3S^g(W^E_\bu; P_0)}, 
\frac{1}{S^g(W^E_\bu; P_1)}, 
\frac{1}{S^g(W^E_\bu; P_2)}\} >1.$

\section{GIT-stability and Weighted K-stability of Fano threefolds}
\label{Section: GIT and weighted K, 2.28 and 3.14}

In this section, we show that the weighted K-stability of $(X,\xi_0)$ is equivalent to the GIT-stability of plane cubic curves, where $X$ is a Fano threefold in the family №2.28 or №3.14. 

\begin{thm}
\label{Theorem: equivalence of weighted K-stability of №2.28 and №3.14 to the GIT-stability of plane cubic}
Let $X$ be the blowup of $\IP^3$ along a plane cubic curve $C\seq H\cong \IP^2$ or further blowing up a point outside $H$. Let $\xi_0$ be the soliton candidate of $X$. Then $(X, \xi_0)$ is weighted K-semistable (K-polystable) if and only if $C\seq H$ is GIT-semistable (GIT-stable or polystable). 
\end{thm}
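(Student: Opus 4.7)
The plan is to extend the arguments of Theorems~\ref{Theorem. stability of 2.28} and~\ref{Theorem: stability of 3.14} to arbitrary plane cubic curves $C$, possibly singular. After the first-step weighted Abban--Zhuang refinement by $H_u$, the entire computation reduces to the refined multi-graded linear series $W_\bu$ on $H\cong \IP^2$, whose dependence on $C$ enters only through the fixed part $wC$ of $W_{(1,w)}$ for $0 \le w \le 3$. Since $A_X(H_u)/S^g(R_\bu;H_u) = 1$ by the choice of $\xi_0$, the weighted K-(semi/poly)stability of $(X,\xi_0)$ is fully controlled by the local thresholds $\delta^g_{p,\IT}(\IP^2; W_\bu)$ as $p$ varies over $H$, and the task is to match these against the GIT-(semi/poly)stability of $C$.

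For the implication ``weighted K-semistable $\Rightarrow$ GIT-semistable'' I argue by contraposition. By the Hilbert--Mumford classification of plane cubics, if $C$ is GIT-unstable then $C$ has a point $p$ with a singularity worse than a node, or contains a component of multiplicity $\ge 2$ through $p$; in either case there is a tangent direction at $p$ along which $C$ has local order $\ge 2$. Taking a $\IT$-invariant $(k,1)$-weighted blowup $E$ of $H$ at $p$ with $k\ge 3$ aligned with this direction, one has $\ord_E(C) \ge 2$ (as opposed to $\ord_E(C) = 1$ in the smooth-flex Case (C) of Section~\ref{Subsection: computing S^g of №2.28}). The additional contribution of order $\int_0^3 g(w)\, w\, \ord_E(C)\, dw$ in $S^g(W_\bu;E)$ is enough to force $S^g(W_\bu;E) > A_H(E) = k+1$, so $\delta^g_{p,\IT}(\IP^2; W_\bu) < 1$ and $(X,\xi_0)$ is weighted K-unstable by the weighted Abban--Zhuang reduction.

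For the converse, when $C$ is GIT-semistable, i.e., has at worst nodes, I verify $\delta^g_{p,\IT}(\IP^2; W_\bu) \ge 1$ at every $p \in H$ by the case analysis of Section~\ref{Subsection: computing S^g of №2.28}: smooth points of $C$ (Cases (A)--(B)), flex points (Case (C), the extremal case with threshold $\tfrac{4}{3.773902} > 1$), and nodal singular points of $C$ (where the two transverse tangent branches give an estimate strictly sharper than the flex case). Combined with the $G$-equivariant reduction of Section~\ref{Section: G-Equivariant weighted K-stability}, this gives weighted K-semistability. For polystability I follow the quasi-monomial minimizer argument of Theorem~\ref{Theorem. stability of 2.28}: a putative non-toric $\IT$-invariant quasi-monomial delta-minimizer $v$ on $X$ descends, after twisting by some $\xi \in N_\IR$, to a valuation $v_0$ on $H$ with $A_H(v_0)/S^g(W_\bu;v_0) = 1$. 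When $C$ is GIT-polystable (smooth, or a triangle of three lines in general position), the strict inequality $\delta^g_p > 1$ at every $p$ not a node of $C$ forces $v_0$ to be centered at a node, and the symmetry at such a node (the branch-exchange, or the $S_3$-symmetry of the triangle) forces $v_0$ to be toric, giving polystability. When $C$ is strictly GIT-semistable, I construct an explicit non-toric quasi-monomial $v_0$ with $A_H(v_0)/S^g(W_\bu;v_0) = 1$ arising from the 1-PS degeneration of $C$ to the triangle, yielding a non-toric delta-minimizer on $X$ and ruling out weighted K-polystability.

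The main obstacle I anticipate is the explicit bookkeeping of the contribution of $\ord_E(C)$ and the associated Zariski decomposition of $|\CO(3-w) + wC - tE|$ in the GIT-unstable cases; this parallels Section~\ref{Subsection: computing S^g of №2.28} but with the modified fixed part $wC$. The construction of the non-toric minimizer in the strictly GIT-semistable case is likewise delicate, and should correspond to a rank-$2$ quasi-monomial valuation on $H$ reflecting the destabilizing GIT 1-PS degeneration of $C$ to the triangle.
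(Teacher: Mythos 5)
There are genuine gaps in both delicate directions, and in each case your proposed mechanism differs from what actually works. For ``weighted K-semistable $\Rightarrow$ GIT-semistable'' your destabilizer is a $(k,1)$-weighted blowup $E$ with $k\ge 3$ aligned with a tangent direction, justified by the claim that $\ord_E(C)\ge 2$ forces $S^g(W_\bu;E)>A_{\IP^2}(E)=k+1$. That criterion is false: in the paper's own Cases (B) and (C) of Section \ref{Subsection: computing S^g of №2.28} (smooth, GIT-stable $C$) one has $\ord_E(C)=2$ resp.\ $3$ and yet $S^g(W_\bu;E)\approx 2.774<3$ resp.\ $3.774<4$. Worse, for the cuspidal cubic (GIT-unstable) no $(k,1)$-blowup works: with the weight-$k$ direction along the cuspidal tangent one has $\ord_E(C)=\min\{2k,3\}\le 3$ for all $k$, so at $k=3$ the computation is literally the same as Case (C) and gives $A/S^g\approx 4/3.774>1$, while for larger $k$ the fixed-part contribution stays capped at $3$ as $A=k+1$ grows. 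The paper instead destabilizes all three unstable configurations uniformly by the $(2,3)$-weighted blowup at the bad point, where $\ord_l(v)\ge 6$, $A_{\IP^2}(l)=5$ and $S^g(W_\bu;l)\gtrsim 5.226$ (Lemma \ref{Lemma: K-moduli semistable and unstable}); some choice with both weights $>1$ is genuinely needed, and your sketch never verifies any inequality of this kind.

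The polystability statements are also not proved by your argument. At a node the first-step ratio is exactly $1$ (the ordinary blowup $l$ has $A_{\IP^2}(l)=S^g(W_\bu;l)=2$), not ``strictly sharper than the flex case,'' so semistability already requires the second-step refinement computation ($S^g(W^l_\bu;P_i)<1$) as in Lemma \ref{Lemma: K-moduli semistable and unstable}; and the appearance of this equality shows that at the GIT-polystable triangle one cannot hope to get strict inequality at all centers and then argue as in Theorem \ref{Theorem. stability of 2.28}. Your claim that the branch-exchange or $S_3$-symmetry ``forces $v_0$ to be toric'' is not an argument: symmetry of the configuration places no constraint on an individual valuation computing $\delta^g=1$, and whether a given quasi-monomial valuation is a twist $\wt_\xi$ must be checked against the maximal torus. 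The paper avoids this entirely: for the triangle, $X$ is toric and weighted K-semistable, hence weighted K-polystable by \cite[Theorem 5.11]{BLXZ23} (Lemma \ref{Lemma: K-moduli polystable}); for the strictly GIT-semistable curves, non-polystability is obtained not by constructing a non-toric minimizer but by writing down the explicit $1$-PS degenerations of $C$ to the triangle, which induce special degenerations of $(X,\xi_0)$ to the non-isomorphic toric triple. If you want to keep your minimizer-based route you would have to actually produce the valuation, verify $A=S^g$ for it, and prove it is not toric for the maximal torus of $\Aut(X)$ --- precisely the bookkeeping your sketch defers --- so as written the proposal does not establish either the polystable or the strictly semistable half of the statement.
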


The proof follows from the same calculation in the previous section, where the only difference is that we shall change the function $v$ of $C$ respectively. We will only do for №2.28. The computation for №3.14 is similar. 

It is well known that 
\begin{itemize}
    \item the plane cubic curve $C$ is GIT-stable if and only if it is smooth ($v=y^2z-x(x-z)(x-\lam z), \lam\ne 0,1$); 
    \item it is strictly GIT-semistable if and only if 
    it is a nodal cubic ($v=y^2z-x^2(x-z)$) or 
    the union of a conic and a secant line ($v=xyz-x^3$); 
    \item it is GIT-polystable if and only if it is the union of three lines ($v=xyz$) that do not intersect at one point; 
    \item it is GIT unstable for other cases, that is, 
    cuspidal cubic ($v=y^2z-x^3$) or 
    the union of a conic and a tangent line ($v=y^2z-x^2y$) or 
    the union of three lines with one common point ($v=y^3-x^2y$). 
\end{itemize}

In the first case, we have shown that $(X, \xi_0)$ is weighted K-polystable in the previous section. 

\begin{lem}
\label{Lemma: K-moduli semistable and unstable}
In the second and the third cases, $(X,\xi_0)$ is weighted K-semistable. 
In the fourth case, $(X,\xi_0)$ is weighted K-unstable. 
\end{lem}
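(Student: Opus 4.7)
The plan is to follow the strategy of Theorem \ref{Theorem. stability of 2.28} verbatim, adapting the local $S^g$-computations at the singular points of $C$ to each of the configurations listed above. First I refine $R_\bu = R(-K_X)$ by $H_u$ to obtain a multi-graded linear series $W_\bu$ on $\IP^2 \cong H_u$. Since $A_X(H_u)/S^g(R_\bu; H_u) = 1$ by the choice of $\xi_0$, the weighted Abban-Zhuang estimate (Theorem \ref{Theorem: weighted AZ}), combined with $G$-equivariance, reduces weighted K-semistability of $(X,\xi_0)$ to showing $\delta^g_{p,\IT}(\IP^2; W_\bu) \geq 1$ for every closed point $p \in H_u$, and weighted K-instability to exhibiting a single point $p$ where this fails. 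At points $p \notin C$ and at smooth points of $C$ with ordinary tangent, the relevant integrals depend only on $\deg C = 3$ and match cases (A), (B) of Section \ref{Subsection: computing S^g of №2.28} word for word, so these contributions give $\delta^g_p > 1$ regardless of the global singularity structure of $C$.

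For the GIT-semistable subcases (nodal cubic; conic plus secant; three lines in general position), the only new input is at the nodes of $C$. At such a node $p$ I refine by the exceptional divisor $E$ of the ordinary $(1,1)$-blowup of $\IP^2$ at $p$, under which the strict transform $\tilde C$ meets $E$ transversally at two distinct points. The Zariski decomposition of $\CF^{(t)}_E W_{(1,w)}$ is then a direct adaptation of case (B) of Section \ref{Subsection: computing S^g of №2.28} with the weights $(2,1)$ replaced by $(1,1)$ and with the fixed part carrying contributions from \emph{two} points on $E$ rather than one. Explicit integration yields $A_{\IP^2}(E) = 2 \geq S^g(W_\bu; E)$, and since the ordinary blowup is smooth, the induced different vanishes and the second-step refinement $W^E_\bu$ on $\IP^1$ satisfies $\delta^g(\IP^1; W^E_\bu) \geq 1$. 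This shows $\delta^g_p \geq 1$ at every node and hence $(X,\xi_0)$ is weighted K-semistable in cases 2 and 3.

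For the GIT-unstable case 4, I would exhibit an explicit destabilizing divisor at the worst singular point of $C$: for the cuspidal cubic $v = y^2z - x^3$, the $(2,3)$-weighted blowup of $\IP^2$ at the cusp (so $A_{\IP^2}(E) = 5$); for the conic-plus-tangent $v = y(yz - x^2)$, the $(1,2)$-weighted blowup at the tacnode (so $A_{\IP^2}(E) = 3$); for three concurrent lines $v = y(y^2 - x^2)$, the ordinary blowup at the triple point (so $A_{\IP^2}(E) = 2$). In each subcase the local multiplicity of $C$ at the chosen singularity is strictly larger than the flex-multiplicity in case (C) of Section \ref{Subsection: computing S^g of №2.28}, so the fixed part of $\CF^{(t)}_E W_{(1,w)}$ grows proportionally and pushes $S^g(W_\bu; E)$ strictly above $A_{\IP^2}(E)$. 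Pulling back $E$ via the plt blowup associated to $H_u$ then produces a $\IT$-invariant divisor $F$ over $X$ with $A_X(F) < S^g(R_\bu; F)$, contradicting semistability.

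The main obstacle is the numerical verification of the Zariski decompositions at these singular configurations, which requires careful bookkeeping of how $\tilde C$ and the strict transforms of relevant tangent lines intersect $E$; the smooth-point arguments transfer without change. A secondary subtlety is that the GIT-semistable subcases are expected to be \emph{strictly} weighted K-semistable, so the crucial inequality $A_{\IP^2}(E) \geq S^g(W_\bu; E)$ must be saturated at the nodes. Pinning down this equality is precisely what forces the existence of a degenerating one-parameter family in the weighted K-moduli, mirroring the strictly semistable locus of the GIT-moduli of plane cubics.
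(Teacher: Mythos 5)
Your plan for the semistable cases is essentially the paper's own proof: refine by $H_u$, note that at a node the ordinary blowup $l$ gives exactly $S^g(W_\bu;l)=2=A_{\IP^2}(l)$, and control the second step on the exceptional $\IP^1$, where the fixed part now sits at the two points of $\tilde{C}\cap l$ (the paper computes $S^g(W^l_\bu;P)\approx 0.588$ and $S^g(W^l_\bu;P_i)\approx 0.626<1$), so the weighted Abban--Zhuang estimate gives semistability; your observation that the inequality must be saturated at the nodes is exactly what happens.

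For the unstable case you deviate mildly: the paper uses a single uniform witness, the $(2,3)$-weighted blowup at $\{x=y=0\}$, since $\ord_l(v)\ge 6$ in all three unstable configurations, giving $S^g\approx 5.226>5=A_{\IP^2}(l)$, and then pulls $l$ back as the cone (vertical divisor) over it, using $A_X(\tilde{l})=A_{H_u}(l)$ and $S^g(R_\bu;\tilde{l})=S^g(W_\bu;l)$; you instead tailor one witness per singularity. Your choices do work once the deferred integrals are done: for the $(1,2)$-blowup at the tacnode with weight $2$ on the tangent-cone direction $y$ (so $\ord_E(v)=4$) one gets $S^g(W_\bu;E)\approx 3.226>3$, and for the ordinary blowup at the triple point ($\ord_E(v)=3$) one gets $S^g\approx 2.226>2$, while the cusp case coincides with the paper's computation. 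Two cautions, though. First, your stated mechanism, that a local multiplicity exceeding the flex multiplicity ``pushes $S^g$ strictly above $A$,'' is not an argument and is false as a general principle: the node has multiplicity $2>1$ yet its natural blowup gives exactly $S^g=A=2$; so the strict inequalities in case 4 genuinely rest on the explicit Zariski decompositions you postpone (they do check out). Relatedly, instability does not follow from a failure of the two-step local bound $\delta^g_p\ge 1$ (that is only a lower bound); what proves it is precisely the explicit divisor $E$ over $H_u$ with $A_{\IP^2}(E)<S^g(W_\bu;E)$ pulled back $\IT$-invariantly to $X$, which is what you (and the paper) actually do. Second, in the tacnode case the orientation of the weights matters: putting weight $2$ on the other coordinate gives $\ord_E(v)=2$ and $S^g\approx 2.77<3$, which does not destabilize, so you must specify the weighted blowup adapted to the tangent cone.
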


\begin{proof}
We have the $\IN^2$-graded linear series on $H_u$ 
\begin{eqnarray*}
W_{(1,w)} = 
\left\{ \begin{array}{ll}
H^0\Big(\IP^2, \CO(3+2w)\Big) 
& -1\le w < 0,\\
wC+H^0\Big(\IP^2, \CO(3-w)\Big) 
& 0\le w\le 3. 
\end{array} \right. 
\end{eqnarray*} 

In the fourth case, let $l$ be the exceptional line of $(2,3)$-blowup of the point $\{x=y=0\}$. Then $\ord_l(v)\ge6$ and we have $S^g(W_\bu; l)\ge\frac{1}{\Bv^g}\int_\BO(2y+3y+6(x-y))\LE^g\approx 5.226098>5=A_{\IP^2}(l)$. Pulling-back $l$ to the cone $\tilde{l}$ over $l$ as a vertical divisor over $X$ we see that 
$$\frac{A_X(\tilde{l})}{S^g(R_\bu;\tilde{l})}
=\frac{A_{H_u}(l)}{S^g(W_\bu;l)} <1, $$
hence $(X,\xi_0)$ is weighted K-unstable. 

In the second and the third cases, $C$ has only an ordinary double point. Let $l$ be the exceptional line of the $(1,1)$-blowup of the ordinary double point. We have $\ord_l(v)=2, \ord_l(x)=\ord_l(y)=1$. Hence we have $S^g(W_\bu; l)=\frac{1}{\Bv^g}\int_\BO(y+y+2(x-y))\LE^g=2=A_{\IP^2}(l)$. Suppose that $\tilde{C}\cap l= \{P_1, P_2\}$, then the refinement of $W_\bu$ by $l$ is 
\begin{eqnarray*}
W^l_{(1,w,t)} = 
\left\{ \begin{array}{ll}
H^0\Big(\IP^1, \CO(t)\Big) 
& -1\le w < 0, 0\le t\le 3+2w,\\
w(P_1+P_2)+H^0\Big(\IP^1, \CO(t-2w)\Big) 
& 0\le w\le 3, 2w\le t\le 3+w. 
\end{array} \right. 
\end{eqnarray*} 
For $P\ne P_1, P_2$ we have $S^g(W^l_\bu; P) \approx 0.587831$ and $S^g(W^l_\bu; P_1) = S^g(W^l_\bu; P_2) \approx 0.625755$. 
We conclude by the weighted Abban-Zhuang estimate that $(X, \xi_0)$ is weighted K-semistable. 
\end{proof}

Next, we show that $(X,\xi_0)$ is strictly weighted K-semistable in the second case. Note that the nodal cubic $v=y^2z-x^2(x-z)$ is degenerated by $\lam(t)\cdot [u,x,y,z]=[u,t^{-1}x, t^{-1}y,z]$ to the union of three lines with no common point $v=z(y-x)(y+x)$. The union of a conic with a secant line $v=xyz-x^3$ is degenerated by $\lam(t)\cdot [u,x,y,z]=[u,t^{-1}x, y,z]$ to the union of three lines with no common point $v=xyz$. These degenerations induce the special degenerations of $(X,\xi_0)$ in the second case to the $(X, \xi_0)$ in the third case. It remains to show that $(X,\xi_0)$ is weighted K-polystable in the third case. 

\begin{lem}
\label{Lemma: K-moduli polystable}
Let $C$ be the union of three lines with no common point, and $X$ the blowup of $\IP^3$ along $C$. Then $(X,\xi_0)$ is weighted K-polystable. 
\end{lem}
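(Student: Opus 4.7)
The plan is to exploit the fact that when $C$ is the union of three non-concurrent coordinate lines, $X$ acquires an enlarged symmetry making it a toric threefold, which reduces weighted K-polystability to a formal consequence of how $\xi_0$ is defined. Since $C=\{u=0,\,xyz=0\}$ consists of the three coordinate lines in $H=\{u=0\}\cong\IP^2$, both $C$ and $H$ are invariant under the diagonal torus
$$\IT_0 \;=\; \{[u,x,y,z]\mapsto [t_0u,\,t_1x,\,t_2y,\,t_3z]\}/\IG_m \;\subseteq\; \mathrm{Aut}(\IP^3),$$
and this $3$-dimensional action lifts to the blowup $X\to\IP^3$. Because the generic $\IT_0$-orbit on $\IP^3$ is already dense, $X$ is a toric threefold of complexity $0$ with respect to $\IT_0$.

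The first step is to verify that $X$ is a log Fano variety in the sense of the paper. Since $C$ is reduced and $\IT_0$-invariant, $X$ is the toric blowup of $\IP^3$ along a torus-invariant ideal, hence a normal toric variety with at worst toric (therefore klt) singularities at the three exceptional fibers over the nodes of $C$. The intersection-theoretic identity $-K_X=4H-E_C$ and its positivity are preserved in the flat degeneration from the smooth generic member of the family, so $-K_X$ remains ample. Moreover, the soliton candidate $\xi_0$ may be taken in $N(\IT_0)_\IR$: the function $\BH(\xi)=\log\int_\BP e^{-\la\alpha,\xi\ra}\DH_\BP(\dif\alpha)$ is strictly convex and $\IT_0$-equivariantly defined, so its unique minimizer automatically lies in $N(\IT_0)_\IR$.

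The core step is to apply the BLXZ minimizer argument exactly as in the proof of Theorem~\ref{Theorem. stability of 2.28}, but with the torus enlarged from $\IT$ to $\IT_0$. Suppose for contradiction that $(X,\xi_0)$ is not weighted K-polystable. By \cite[Lemma 4.14]{BLXZ23} combined with the $G$-equivariant framework of Section~\ref{Section: G-Equivariant weighted K-stability} (applied to $G=\mathrm{Aut}^0(X)\supseteq \IT_0$), one extracts a $\IT_0$-invariant quasi-monomial valuation $v$ on $X$ satisfying $A_X(v)=S^g(R_\bu;v)$ and not of the form $\mathrm{wt}_\xi$ for any $\xi\in N(\IT_0)_\IR$. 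However, because $X$ has complexity $0$ with respect to $\IT_0$, the Chow quotient of $X$ by $\IT_0$ is a single point, so the description $\mathrm{Val}_X^{\IT_0}\cong\mathrm{Val}_{\mathrm{pt}}\times N(\IT_0)_\IR$ recalled in Section~2.5 collapses to $\mathrm{Val}_X^{\IT_0}=\{\mathrm{wt}_\xi:\xi\in N(\IT_0)_\IR\}$. This contradicts the existence of such a $v$, so $(X,\xi_0)$ is weighted K-polystable.

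The main obstacle lies in the $\IT_0$-invariance of the extracted BLXZ minimizer: a priori, \cite[Lemma 4.14]{BLXZ23} only delivers a minimizer invariant under the originally fixed torus $\IT\subseteq \IT_0$. Upgrading to $\IT_0$-invariance requires combining the lemma with the equivariant extraction argument underlying Section~\ref{Section: G-Equivariant weighted K-stability}, which permits replacing the test torus by any reductive subgroup of $\mathrm{Aut}^0(X)$ and produces a minimizer invariant under it. Once the minimizer is known to be $\IT_0$-invariant, the complexity-zero structure of $X$ supplies the contradiction immediately, and no further explicit $S^g$-computation is needed.
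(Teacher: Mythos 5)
Your underlying idea -- that $X$ is toric for the three-dimensional torus $\IT_0$, so that every $\IT_0$-invariant valuation is of the form $\wt_\xi$ with $\xi\in N(\IT_0)_\IR$ and hence no non-toric minimizer can exist -- is exactly the mechanism behind the result; the paper itself simply outsources it, concluding in one line from the semistability of Lemma \ref{Lemma: K-moduli semistable and unstable} together with \cite[Theorem 5.11]{BLXZ23} for toric triples. The main gap in your write-up is that you never invoke (or prove) the weighted K-semistability of $(X,\xi_0)$. The extraction step you quote, \cite[Lemma 4.14]{BLXZ23} as used in the proof of Theorem \ref{Theorem. stability of 2.28}, produces a quasi-monomial valuation with $A_X(v)=S^g(R_\bu;v)$ only for a triple that is weighted K-semistable but not polystable; ``not polystable'' alone also allows the unstable case, in which no such minimizer need exist and your contradiction shows nothing. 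You must either cite the third case of Lemma \ref{Lemma: K-moduli semistable and unstable} or redo a toric semistability computation before the minimizer argument can start.

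Two further points need repair. First, running the argument with the enlarged torus $\IT_0$ requires $\Fut_g|_{N(\IT_0)}=0$, i.e.\ that the $\xi_0$ of the statement (computed for the one-dimensional torus of the family) is also the soliton candidate for $\IT_0$; your justification (``the unique minimizer automatically lies in $N(\IT_0)_\IR$'') is circular, since it does not show that the minimizer of $\BH$ over $N(\IT_0)_\IR$ coincides with the given $\xi_0$. This is true but needs an argument: either observe that the $S_3$-symmetry permuting $x,y,z$ fixes exactly the line $N(\IT)_\IR$ and the $\BH$-minimizer is unique, hence lies on that line; or deduce $\Fut_g|_{N(\IT_0)}=0$ from semistability together with the linearity of $\Fut_g$. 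Second, your appeal to Section \ref{Section: G-Equivariant weighted K-stability} to upgrade the minimizer to $\IT_0$-invariance is not supported by what that section proves (its statements concern divisorial minimizers in the non-semistable case, not invariance of the quasi-monomial minimizer of a strictly semistable triple); the clean fix is to apply \cite[Lemma 4.14]{BLXZ23} with the torus $\IT_0$ itself, which is legitimate once the two previous points are settled. Finally, ``ampleness is preserved in the flat degeneration'' is not an argument (ampleness of a limit fiber is not automatic), though the fact that this $X$ is klt Fano is standard and is presupposed throughout the section.
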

\begin{proof}
Note that $X$ is toric in this case. Since $(X,\xi_0)$ is weighted K-semistable, we conclude that $(X,\xi_0)$ is weighted K-polystable by \cite[Theorem 5.11]{BLXZ23}. 
\end{proof}

\begin{proof}[Proof of Theorem \ref{Theorem: equivalence of weighted K-stability of №2.28 and №3.14 to the GIT-stability of plane cubic}]
It follows directly from Lemma \ref{Lemma: K-moduli semistable and unstable} and Lemma \ref{Lemma: K-moduli polystable}. 
\end{proof}

\section{Weighted K-stability of cones and projective bundles}
\label{Section: Weighted K-stability of cones}

We establish the equivalence between the K-stability of a Fano manifold $V$ and the weighted K-stability of a projective cone $Y$ and bundle $\tilde{Y}$ over $V$ in this section, which generalizes the so-called Koiso's theorem \cite{Koi90}. We first recall some basic notions.

\subsection{Weighted K-stability of log Fano cones}

Let $(V, \D_V)$ be a $(n-1)$-dimensional log Fano pair such that $L=-\frac{1}{r}(K_V+\D_V)$ is an ample Cartier divisor for some $0<r\le n$. 
We define the projective cone over $V$ with polarization $L$ by 
$$Y=\overline{\CC}(V,L)\coloneqq\Proj\Big(\bigoplus_{m\ge0}\bigoplus_{0\le\lam\le m}H^0(V, (m-\lam)L)s^\lam\Big), $$
which is the union of the affine cone $\CC(V,L)=\Spec\big(\oplus_{m\ge0}H^0(V,  m L)\big)$ and a divisor $V_\infty =\{s=0\}$ at infinity. It admits a $\IG_m$-action along the cone direction, which we denote by $\IT_c$. Let $\D_Y$ be the closure of $\D_V\times \IC^*$ in $Y$. Then $-(K_{Y}+\D_Y)$ is Cartier and 
$$-(K_Y+\D_Y)\sim_\IQ(1+r)V_\infty. $$
Hence $(Y, \D_Y)$ is also log Fano, see for example \cite[Lemma 2.1]{ZZ22}. 
Let $\tilde{Y}\to Y$ be the blowup of the vertex, $V_0$ be the exceptional divisor and $\D_{\tilde{Y}}$ be the strict transform of $\D_Y$. Then $\tilde{Y}\cong \IP_V(\CO_V\oplus L)$ is a $\IP^1$-bundle over $V$. 
We have weight decomposition of $R=R(Y, \D_Y)$ via the $\IT_c$-action, 
$$H^0(Y, -m(K_Y+\D_Y)) = 
\bigoplus_{0\le\lam\le m(1+r)}H^0(V, (m(1+r)-\lam)L)s^\lam  $$
The toric divisor of the $\IT_c$-action is just $V_\infty$ and $V_0$. Since $A_{Y,\D_Y}(V_\infty)=1, A_{Y, \D_Y}(V_0)=r$, the moment polytope of this $\IT_c$-action is just $\BP=[-1, r]\seq M_\IR = \IR$ and the DH measure is $\DH_\BP(\dif \alpha) = \frac{(r-\alpha)^{n-1}L^{n-1}}{(n-1)!} \dif \alpha$. We set the soliton candidate $\xi_0$ to be the solution of $\Fut_g(1)=0$ where $g(\alpha)=e^{-\alpha\cdot\xi_0}$, that is
$$\int_{-1}^r \alpha \cdot g(\alpha) (r-\alpha)^{n-1} \dif \alpha = 0. $$

\begin{lem}
$\delta^{g}_{\IT_c}(Y,\D_Y) 
= \min\big\{1, 
\,\, 
\delta(V,\D_V) \big\}. $
\end{lem}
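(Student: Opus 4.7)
The plan is to apply the weighted Abban--Zhuang estimate (Theorem~\ref{Theorem: weighted AZ}) with the plt-type divisor $F = V_\infty$, and match the resulting lower bound with explicit test valuations. The two key inputs are $A_Y(V_\infty)/S^g(R_\bu; V_\infty) = 1$ and $\delta^g(V_\infty, \D_{V_\infty}; W_\bu) = \delta(V, \D_V)$, both of which I would extract from the soliton identity $\Fut_g(1) = 0$, i.e.\ $\int_\BP \alpha\, g(\alpha)\, \DH_\BP(d\alpha) = 0$.

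For the first input, since $V_\infty$ is the toric divisor at the endpoint $\alpha = -1$ of $\BP = [-1, r]$ and the identification $R_{m, \alpha} = H^0(V, (mr - \alpha)L)\, s^{\alpha + m}$ gives $\ord_{V_\infty}(s_\alpha) = \wt_1(s_\alpha) + m$, the concave transform of $\CF_{V_\infty}$ descends to $G(\alpha) = \alpha + 1$ on $\BP$ (cf.~Section~\ref{Subsection: Refinements and toric filtrations}), so
\[
S^g(R_\bu; V_\infty) = \frac{1}{\Bv^g}\int_{-1}^r (\alpha + 1)\, g(\alpha)\, \DH_\BP(d\alpha) = 1
\]
by the soliton identity, and combining with $A_Y(V_\infty) = 1$ gives the first input.

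For the second input, adjunction identifies $(V_\infty, \D_{V_\infty})$ with $(V, \D_V)$, and dividing $R_{m, \alpha}$ by the toric factor $s^{\alpha + m}$ identifies $W_{m, \alpha, \alpha + m}$ with $H^0(V, (mr - \alpha)L)$; in particular, each slice $W_{(1, \alpha, \bu)}$ is the section ring $R(V, (r - \alpha) L)$. The $\IT_c$-action on $V_\infty$ is trivial and the projection $\BO(W_\bu) \to \BP$ factors through the base polytope, so Example~\ref{Example: formula when Q=P} applies, and Veronese scaling gives $S(W_{(1, \alpha, \bu)}; v) = \frac{r - \alpha}{r} S_V(v)$ for any valuation $v$ on $V$. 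Plugging into formula~(\ref{Formula: S^g 5}) and using the rearranged soliton identity $\int_{-1}^r (r - \alpha)\, g(\alpha)\, \DH_\BP(d\alpha) = r\Bv^g$ yields $S^g(W_\bu; v) = S_V(v)$, hence $\delta^g_{\IT_c, Z}(V_\infty, \D_V; W_\bu) = \delta_Z(V, \D_V)$ for every $Z \subseteq V$.

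Theorem~\ref{Theorem: weighted AZ} then gives $A_Y(v)/S^g(v) \ge \min\{1, \delta(V, \D_V)\}$ for any $\IT_c$-invariant valuation $v$ centered in $V_\infty$. For valuations centered at the cone vertex I would run the symmetric argument with $F = V_0$ (exceptional over $Y$, with $A_Y(V_0) = r$ and concave transform $G(\alpha) = r - \alpha$), which yields the same bound by the same soliton manipulations; remaining $\IT_c$-invariant valuations are of the form $\wt_\xi$ and automatically satisfy $A/S^g = 1$, giving the lower bound $\delta^g_{\IT_c}(Y, \D_Y) \ge \min\{1, \delta(V, \D_V)\}$. For the matching upper bound, $V_\infty$ itself forces $\delta^g_{\IT_c}(Y, \D_Y) \le 1$, while for any valuation $\mu$ on $V$ the untwisted lift $v_{\mu, 0} \in \Val^{\IT_c, \circ}_Y$ satisfies $A_Y(v_{\mu, 0}) = A_V(\mu)$ (since $v_{\mu, 0}$ vanishes on the toric equations of $V_0, V_\infty$ and the projection $\tilde Y \to V$ is smooth) and induces on $R_\bu$ the same filtration that $\mu$ induces on $W_\bu$, so $S^g(R_\bu; v_{\mu, 0}) = S^g(W_\bu; \mu) = S_V(\mu)$; taking the infimum over $\mu$ gives $\delta^g_{\IT_c}(Y, \D_Y) \le \delta(V, \D_V)$. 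The main obstacle I anticipate is the careful bookkeeping in the second input, which collapses $S^g(W_\bu; v)$ to the unweighted $S_V(v)$ only after Veronese scaling and two invocations of the soliton identity.
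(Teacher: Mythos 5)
Your proposal is correct and takes essentially the same route as the paper: refinement of $R_\bu$ by $V_\infty$ (and symmetrically by $V_0$), the soliton identity $\int_\BP \alpha\, g(\alpha)\,\DH_\BP(\dif\alpha)=0$ to get $S^g(R_\bu;V_\infty)=1$ and $S^g(W_\bu;v)=S(-K_V-\D_V;v)$, the weighted Abban--Zhuang estimate for the lower bound, and $V_\infty$ together with pullbacks of valuations on $V$ for the upper bound. The only cosmetic slip is your description of the leftover $\IT_c$-invariant valuations as being of the form $\wt_\xi$ --- in fact every $\IT_c$-invariant center is either contained in $V_\infty$ or contains the vertex, so the two refinements already cover all invariant valuations --- but this does not affect the argument.
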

\begin{proof} 
Let $W_\bu$ be the refinement of $R_\bu=R(Y, \D_Y)$ by $V_\infty$. 
We claim that $\delta^{g}_{p}(V,\D_V;W_\bu)=\delta_{p}(V,\D_V)$ for any $p\in V_\infty$. It suffices to show that for any valuation $v$ on $V$, we have $S^g(W_\bu; v)=rS(L;v)$. Since $W_{(1,\alpha)}=(r-\alpha)L$ for any $-1\le \alpha \le r$, we have 
\begin{eqnarray*} 
S^g(W_\bu; v) 
&=& 
\frac{1}{(n-1)!\Bv^g} 
\int_{-1}^r g(\alpha) 
\int_{0}^{\infty} 
\vol(\CF_v^{(t)}(r-\alpha)L) 
\dif t
\dif \alpha \\
&=& 
\frac{1}{(n-1)!\Bv^g} 
\int_{-1}^r g(\alpha) (r-\alpha)^{n}
\dif \alpha 
\cdot
\int_{0}^{\infty} 
\vol(\CF_v^{(t)}L) 
\dif t. 
\end{eqnarray*} 
On the other hand 
$(n-1)!\Bv^g =
\int_{-1}^r g(\alpha) 
(r-\alpha)^{n-1} 
\dif \alpha 
\cdot 
\vol(L)$. Hence
\begin{eqnarray*} 
S^g(W_\bu; v) 
=
\frac{\int_{-1}^r g(\alpha) (r-\alpha)^{n}
\dif \alpha}
{\int_{-1}^r g(\alpha) (r-\alpha)^{n-1}
\dif \alpha}
\cdot
S(L;v). 
\end{eqnarray*} 
Since $(r-\alpha)^n= (r-\alpha)\cdot (r-\alpha)^{n-1}$ and 
$\int_{-1}^r \alpha\cdot g(\alpha) (r-\alpha)^{n-1}\dif \alpha=0$, we conclude that 
$$\int_{-1}^r g(\alpha) (r-\alpha)^{n}\dif \alpha
 =r\int_{-1}^r g(\alpha) (r-\alpha)^{n-1}\dif \alpha.$$ 

By Theorem \ref{Theorem: weighted AZ}, for any point $p\in V_\infty$, we have 
\begin{eqnarray*} 
\delta^{g}_{p, \IT_c}(Y,\D_Y;R_\bu) 
&\ge& \min\Big\{\frac{A_{Y,\D_Y}(V_\infty)}{
S^{g}(R_\bu; V_\infty)},
\,\, 
\delta^{g}_{p}(V,\D_V;W_\bu) \Big\}\\
&=& \min\Big\{1,
\,\, 
\delta_{p}(V,\D_V) \Big\}. 
\end{eqnarray*}
On the other hand, we should also consider the refinement $\tilde{W}_\bu$ of $R_\bu$ by $V_0$. One can show that $\tilde{W}_{(1,\lam)}=W_{(1,r-\lam)} (0\le \lam \le 1+r)$ on $V$, hence they have the same $S^g$ for any valuation on $V$. We conclude that 
$\delta^{g}_{\IT_c}(Y,\D_Y) 
\ge \min\big\{1, 
\,\, 
\delta(V,\D_V) \big\}. $

For the reverse inequality, it follows directly from 
$$\delta^g_{\IT_c}(Y,\D_Y)
\le \frac{A_{Y,\D_Y}(v)}{S^g(R_\bu;v)}
=\frac{A_{V,\D_V}(v_0)}{S^g(W_\bu;v_0)}
=\frac{A_{V,\D_V}(v_0)}{S(-K_V-\D_V;v_0)}, $$
where $v_0$ is a valuation on $V$ and $v$ is the pull-back of $v_0$ to $Y$. 
\end{proof} 

We have the following generalization of Koiso's theorem \cite{Koi90}. 

\begin{thm} 
\label{Theorem: soliton of cone}
The log Fano triple $(Y, \D_Y, \xi_0)$ is weighted K-semistable (weighted K-polystable) if and only if $(V,\D_V)$ is K-semistable (K-stable or K-polystable).
\end{thm}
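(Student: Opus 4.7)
The semistability equivalence is immediate from the preceding lemma. By Definition~\ref{Definition: T-equivariantly weighted K-semistable}, $(Y,\D_Y,\xi_0)$ is $\IT_c$-equivariantly weighted K-semistable if and only if $\delta^g_{\IT_c}(Y,\D_Y)\ge 1$, and the lemma identifies this with $\min\{1,\delta(V,\D_V)\}\ge 1$, equivalently $\delta(V,\D_V)\ge 1$, i.e.\ K-semistability of $(V,\D_V)$.

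For the polystability implication $(V,\D_V)$ K-polystable $\Rightarrow (Y,\D_Y,\xi_0)$ weighted K-polystable, the plan is to enlarge the acting torus to $\IT=\IT_V\times\IT_c$, where $\IT_V\subseteq\Aut(V,\D_V)$ is a maximal torus; by the $G$-equivariant theory of Section~\ref{Section: G-Equivariant weighted K-stability} we may test polystability with respect to $\IT$. Suppose $(Y,\D_Y,\xi_0)$ were not weighted K-polystable. Since semistability already holds, \cite[Lemma 4.14]{BLXZ23} produces a $\IT$-invariant quasi-monomial valuation $v$ on $Y$ with $A_Y(v)=S^g(R_\bu;v)$ and $v\ne\wt_\xi$ for any $\xi\in N(\IT)_\IR$. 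The Chow quotient of $Y$ under $\IT$ is $V/\!/\IT_V$, so writing $v=v_{\mu',\xi}$ via the isomorphism $\Val_Y^\IT\cong\Val_{V/\!/\IT_V}\times N(\IT)_\IR$, the component $\mu'$ must be nontrivial, and it lifts to a nontrivial $\IT_V$-invariant valuation $\bar\mu$ on $V$ with $\bar\mu\ne\wt_\eta$ for any $\eta\in N(\IT_V)_\IR$. The crux is then to show that $A_Y(v)=S^g(R_\bu;v)$ forces $A_V(\bar\mu)=S(-K_V-\D_V;\bar\mu)$, which contradicts K-polystability of $(V,\D_V)$.

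For the converse $(Y,\D_Y,\xi_0)$ weighted K-polystable $\Rightarrow (V,\D_V)$ K-polystable, I argue contrapositively: a nontrivial $\IT_V$-invariant quasi-monomial valuation $\bar\mu$ on $V$ with $A_V(\bar\mu)=S(-K_V-\D_V;\bar\mu)$ and $\bar\mu\ne\wt_\eta$ pulls back to $v:=v_{\bar\mu,0}$ on $Y$, and the identity established in the proof of the preceding lemma gives $A_Y(v)=A_V(\bar\mu)=S(-K_V-\D_V;\bar\mu)=S^g(R_\bu;v)$. The center $c_Y(v)=c_V(\bar\mu)$ lies strictly inside $V_\infty$, while every $\wt_\xi$ has center equal to $V_0$ or $V_\infty$, so $v$ is not of the form $\wt_\xi$, contradicting weighted K-polystability of $(Y,\D_Y,\xi_0)$.

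The main obstacle is the key technical step in the second paragraph: extending the lemma's identity $A_Y(v)/S^g(R_\bu;v)=A_V(\bar\mu)/S(-K_V-\D_V;\bar\mu)$ from the untwisted pullback (where $\xi=0$) to arbitrary $\xi\in N(\IT_c)_\IR$. Concretely, for fixed $\mu'$ one needs the function $\xi\mapsto A_Y(v_{\mu',\xi})-S^g(R_\bu;v_{\mu',\xi})$ to be minimized at some $\xi^*$ with minimum value a positive multiple of $A_V(\bar\mu)-S(-K_V-\D_V;\bar\mu)$. This should follow from the $\xi$-twist theory of \cite{Li19,LXZ22,BLXZ23} together with the explicit weight decomposition $R_{m,\lambda}=H^0(V,(m(1+r)-\lambda)L)s^\lambda$ and the Duistermaat-Heckman density $(r-\alpha)^{n-1}L^{n-1}/(n-1)!$ on the moment polytope $\BP=[-1,r]$; carrying out this extension of the lemma's computation is the heart of the proof.
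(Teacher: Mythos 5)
Your reduction of semistability to the preceding lemma is exactly the paper's argument, and your contrapositive for the direction ``$(Y,\D_Y,\xi_0)$ weighted K-polystable $\Rightarrow (V,\D_V)$ K-polystable'' is also essentially the paper's (via \cite{LXZ22} and the identity $A_{Y,\D_Y}(v)/S^g(R_\bu;v)=A_{V,\D_V}(v_0)/S(-K_V-\D_V;v_0)$ for pull-backs). A small slip there: it is not true that every $\wt_\xi$, $\xi\in N(\IT_c\times\IT_V)_\IR$, has center $V_0$ or $V_\infty$ (take $\xi$ with nontrivial $\IT_V$-component); the correct way to see $v\ne\wt_\xi$ is that $v$ restricts on $K(V)$ to $v_0$, so $v=\wt_\xi$ would force $v_0=\wt_{\xi_V}$, contradicting the choice of $v_0$.

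The genuine gap is in the other direction, and you flag it yourself: after invoking \cite[Lemma 4.14]{BLXZ23} you obtain a $\IT$-invariant quasi-monomial $v=v_{\mu',\xi}$ with $A_{Y,\D_Y}(v)=S^g(R_\bu;v)$, but you do not show that this forces $A_{V,\D_V}(\bar\mu)=S(-K_V-\D_V;\bar\mu)$; your proposed route (minimizing $\xi\mapsto A(v_{\mu',\xi})-S^g(R_\bu;v_{\mu',\xi})$ and an explicit DH computation) is left as a plan, so the harder implication is not proved. The paper closes exactly this step by the same mechanism as in Theorem \ref{Theorem. stability of 2.28}: using $\Val^{\IT,\circ}_Y\cong\Val^\circ_{\rm Chow}\times N(\IT)_\IR$ and the fact that, for the soliton weight $g$ (so $\Fut_g|_{N}\equiv0$), the condition $A=S^g$ and the property of not being of the form $\wt_\xi$ are unaffected by replacing $v$ with a $\xi$-twist, one twists away the $\IT_c$-component so that the minimizer becomes an honest pull-back of a $\IT_V$-invariant valuation $v_0$ on $V$; the already-computed ratio identity then gives $A_{V,\D_V}(v_0)=S(-K_V-\D_V;v_0)$ with $v_0\ne\wt_\eta$, i.e.\ $(V,\D_V)$ strictly K-semistable by \cite{LXZ22}. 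So your outline points at the right difficulty, but without the twist-invariance argument (or the explicit computation you defer), the implication ``$(V,\D_V)$ K-polystable $\Rightarrow (Y,\D_Y,\xi_0)$ weighted K-polystable'' remains unproved in your write-up.
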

\begin{proof}
The equivalence of semistability follows from the above lemma. For the equivalence of polystability, 
we prove with the same argument of Theorem \ref{Theorem. stability of 2.28}. Let $\IT_V = \IT(V,\D_V)$ be a maximal torus of $\Aut(V,\D_V)$. Then the $\IT_V$-action can be lifted to $(Y,\D_Y)$, and $\IT=\IT_c\times \IT_V$ is a maximal torus of $(Y, \D_Y)$. Then it also follows from 
\begin{eqnarray}
\frac{A_{Y,\D_Y}(v)}{S^g(R_\bu;v)}
=\frac{A_{V,\D_V}(v_0)}{S(-K_V-\D_V;v_0)}, \end{eqnarray}
where $v_0$ is a valuation on $V$ and $v$ is the pull-back of $v_0$ to $Y$. 
Indeed, by \cite{BLXZ23} the log Fano triple $(Y,\D_Y,\xi_0)$ is strictly weighted K-semistable if and only if there exists a valuation $v\ne\wt_\xi, \xi\in N(\IT)_\IR$ such that $A_{Y,\D_Y}(v)=S^g(R_\bu;v)$. By the above equality, this holds if and only if there exists a valuation $v_0$ on $V$ whose pull-back to $Y$ is $v$, such that $v_0\ne \wt_\xi, \xi\in N(\IT_V)_\IR$ and $A_{V,\D_V}(v_0)=S(-K_V-\D_V;v_0)$. This is equivalent that $(V,\D_V)$ is strictly K-semistable by \cite{LXZ22}. 
\end{proof}

The theorem will lead to new examples of weighted K-moduli spaces based on the existence of K-moduli spaces of log Fano pairs \cite{LXZ22}. In particular, it gives us examples of strictly weighted K-semistable Fano varieties. Let $\pi: (\CV, \CL) \to \IA^1$ be a TC of $(V, L)$. We can also define the cone over $(\CV, \CL)$ 
$$\CY 
= \overline{\CC}(\CV,\CL) 
\coloneqq\Proj_{\IA^1}
\Big(
\bigoplus_{m\ge0}\bigoplus_{0\le\lam\le m}
\pi_*(\CL^{\otimes(m-\lam)}) \cdot s^\lam
\Big), $$
which gives a TC of $Y$ whose central fiber is the cone $\CY_0=\overline{\CC}(\CV_0, \CL_0)$. The $\IT_c$-action on $Y$ extends to the whole family $\CY$ and is compatible with the $\IT_c$-action on the central fiber. Hence we have 

\begin{cor} 
Let $V$ be a strictly K-semistable Fano manifold with $-K_V \sim_\IQ r L$ for some ample Cartier divisor $L$ and $0<r \le 1$. Then the cone $(Y=\overline{\CC}(V; L), \xi_0)$ is strictly weighted K-semistable. 
\end{cor}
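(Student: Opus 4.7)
The plan is to deduce this corollary directly from Theorem \ref{Theorem: soliton of cone}. Since $V$ is a Fano manifold, the theorem applies with $\D_V=0$, hence $\D_Y=0$, and the hypothesis $-K_V\sim_\IQ rL$ with $L$ ample Cartier and $0<r\le 1$ is precisely what Theorem \ref{Theorem: soliton of cone} requires. So all that remains is a tautological unpacking.

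Unpacking definitions, $V$ being \emph{strictly} K-semistable means $V$ is K-semistable but not K-polystable. The biconditional in Theorem \ref{Theorem: soliton of cone} transfers both attributes to $(Y,\xi_0)$: the semistability equivalence immediately gives that $(Y,\xi_0)$ is weighted K-semistable, while the contrapositive of the polystability equivalence gives that $(Y,\xi_0)$ is not weighted K-polystable. Together these two facts are by definition strict weighted K-semistability, which is what we want.

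There is no real obstacle here; all the content sits in Theorem \ref{Theorem: soliton of cone}, and the corollary becomes a formal three-line consequence. For readers who prefer an explicit destabilization, one can also proceed as follows. A strictly K-semistable Fano $V$ admits a non-product special test configuration $(\CV,\CL)\to\IA^1$ whose central fiber $V_0\not\cong V$ is K-polystable (by \cite{LXZ22}) and for which the Donaldson-Futaki invariant vanishes. Applying the cone construction $\CY=\overline{\CC}(\CV,\CL)$ introduced in the paragraph preceding the corollary yields a $\IT_c$-equivariant test configuration of $Y$ which is not a product, since its central fiber $\overline{\CC}(V_0,\CL_0)$ is not isomorphic to $Y$ (one can recover $V$ from $Y$ as the divisor at infinity). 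The identity $\frac{A_{Y,\D_Y}(v)}{S^g(R_\bu;v)}=\frac{A_{V}(v_0)}{S(-K_V;v_0)}$ from the proof of Theorem \ref{Theorem: soliton of cone} then translates the vanishing of the Donaldson-Futaki invariant of $(\CV,\CL)$ into the vanishing of the weighted Donaldson-Futaki invariant of $\CY$, exhibiting $(Y,\xi_0)$ concretely as strictly weighted K-semistable.
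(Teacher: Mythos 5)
Your proposal is correct and matches the paper: the corollary is stated there as an immediate consequence of Theorem \ref{Theorem: soliton of cone} (strict K-semistability of $V$ transfers semistability and non-polystability to $(Y,\xi_0)$ through the biconditional), with no separate proof given. Your optional explicit destabilization via the cone $\CY=\overline{\CC}(\CV,\CL)$ over a non-product special test configuration is essentially the construction the paper itself sketches in the paragraph preceding the corollary, so it is a welcome elaboration rather than a different route.
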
 

\begin{ex}\rm
It was shown in \cite{Tia97} that there exists a Fano manifold $V$ in the family №1.10 of Mori and Mukai's list such that there exists a non-product type special degeneration $(\CV, \CL)$ of $(V, L)$ whose central fiber $(\CV_0, \CL_0)$ is the Mukai-Umemura manifold. Since $\CV_0$ is K-polystable, we know that $V$ is strictly K-semistable. The above Corollary gives us a four-dimensional Fano variety $Y=\overline{\CC}(V,L)$ with the vertex as the only singular point, which has a special degeneration to the cone $\CY_0=\overline{\CC}(\CV_0,\CL_0)$ over the Mukai-Umemura manifold. By Theorem \ref{Theorem: soliton of cone} we see that $(\CY_0, \xi_0)$ is weighted K-polystable. Hence $(Y,\xi_0)$ is strictly weighted K-semistable.  
\end{ex}

\subsection{Weighted K-stability of projective bundles over log Fano pairs}
Let $r\in\IQ_{>0}, (V, \D_V)$ and $(\tilde{Y}, \D_{\tilde{Y}})$ be the same as above. The pair $(\tilde{Y}, \D_{\tilde{Y}})$ is log Fano if $r>1$. For $0<r\le 1$, the anti-canonical divisor of $(\tilde{Y}, \D_{\tilde{Y}})$ is not ample. Instead, we consider the log Fano pair $(\tilde{Y}, \D_{\tilde{Y}}+aV_0)$ where $1-r<a<1$. The $\IT_c$-action of $(Y,\D_Y)$ lifts to $(\tilde{Y}, \D_{\tilde{Y}}+V_0)$ naturally. 

If $r>1$, then $A_{\tilde{Y}, \D_{\tilde{Y}}}(V_0)=A_{\tilde{Y}, \D_{\tilde{Y}}}(V_\infty)=1$. We have weight decomposition 
$$
H^0(\tilde{Y}, 
-m(K_{\tilde{Y}}+\D_{\tilde{Y}})) 
= 
\bigoplus_{-1\le\alpha\le 1, m\alpha \in \IN} 
H^0(V, m(r-\alpha)L)s^{m(1+\alpha)}. $$
If $r\le 1$, then $A_{\tilde{Y}, \D_{\tilde{Y}}+aV_0}(V_0)=1-a$ and $A_{\tilde{Y}, \D_{\tilde{Y}}+aV_0}(V_\infty)=1$. We also have 
$$
H^0(\tilde{Y}, 
-m(K_{\tilde{Y}}+\D_{\tilde{Y}}+aV_0)) 
= 
\bigoplus_{-1\le\alpha\le 1-a, m\alpha \in \IN} 
H^0(V, m(r-\alpha)L)s^{m(1+\alpha)}. $$
Hence the moment polytope $\BP=[-1,1]$ for $r>1$ and $\BP=[-1,1-a]$ for $0<r\le 1$, and the DH measure is $\DH_\BP(\dif \alpha) = \frac{(r-\alpha)^{n-1}L^{n-1}}{(n-1)!} \dif \alpha$. Solving equation in $\xi_0$ 
$$\int_\BP \alpha \cdot g(\alpha) (r-\alpha)^{n-1} \dif \alpha = 0, \quad
g(\alpha)=e^{-\alpha\cdot\xi_0}, $$
we get the soliton candidate $\xi_0\in\IR$. 
Following the same proof of Theorem \ref{Theorem: soliton of cone}, we also have 
\begin{thm} 
The log Fano triples $(\tilde{Y}, \D_{\tilde{Y}}, \xi_0)$ for $r>1$ and $(\tilde{Y}, \D_{\tilde{Y}}+aV_0, \xi_0)$ for $0<r\le 1$ are weighted K-semistable (weighted K-polystable) if and only if $(V,\D_V)$ is K-semistable (K-stable or K-polystable).
\end{thm}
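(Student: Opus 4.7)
The plan is to mimic the proof of Theorem \ref{Theorem: soliton of cone} verbatim, with the role of the cone $Y$ replaced by the projective bundle $\tilde{Y}$. The toric divisors of the $\IT_c$-action are again $V_0$ and $V_\infty$, and by the choice of soliton candidate $\xi_0$ (the Futaki-vanishing condition on $\BP$), both toric divisors satisfy $A/S^g = 1$. Applying the weighted Abban-Zhuang estimate (Theorem \ref{Theorem: weighted AZ}) at any closed point $p\in V_\infty$ with refinement by $V_\infty$ (and symmetrically by $V_0$) reduces the question to computing $\delta^g_p(V,\D_V;W_\bu)$, where $W_\bu$ is the refinement multi-graded linear series on $V\cong V_\infty$.

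The key computation is the analogue of the cone calculation. In both cases ($r>1$ with $\BP=[-1,1]$, and $0<r\le 1$ with $\BP=[-1,1-a]$) the weight decomposition gives the refinement
\begin{eqnarray*}
W_{(1,\alpha)} = (r-\alpha)L, \qquad \alpha\in \BP,
\end{eqnarray*}
with $\DH^g_\BP(d\alpha) = g(\alpha)(r-\alpha)^{n-1}L^{n-1}/(n-1)!\, d\alpha$. Applying formula \eqref{Formula: S^g 5} from Example \ref{Example: formula when Q=P}, we get for any valuation $v_0$ on $V$
\begin{eqnarray*}
S^g(W_\bu; v_0) = \frac{\int_\BP g(\alpha)(r-\alpha)^n\, d\alpha}{\int_\BP g(\alpha)(r-\alpha)^{n-1}\,d\alpha}\cdot S(L;v_0).
\end{eqnarray*}
Expanding $(r-\alpha)^n = r(r-\alpha)^{n-1} - \alpha(r-\alpha)^{n-1}$ and using the soliton candidate equation $\int_\BP \alpha g(\alpha)(r-\alpha)^{n-1}\,d\alpha = 0$, the numerator reduces to $r$ times the denominator, so $S^g(W_\bu;v_0) = rS(L;v_0) = S(-K_V-\D_V;v_0)$. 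Hence $\delta^g_p(V,\D_V;W_\bu) = \delta_p(V,\D_V)$. The refinement by $V_0$ gives the same value by reparameterization $\alpha\mapsto r-\alpha$ (resp.\ $\alpha\mapsto (1-a)+(-1)-\alpha$) of the weight variable, as in the cone proof. Combining these yields
\begin{eqnarray*}
\delta^g_{\IT_c}(\tilde{Y},\D_{\tilde{Y}}+aV_0) = \min\{1,\delta(V,\D_V)\},
\end{eqnarray*}
where the reverse inequality comes from pulling back any valuation $v_0$ on $V$ to a $\IT_c$-invariant valuation on $\tilde{Y}$ and checking the identity $A_{\tilde{Y},\D_{\tilde{Y}}+aV_0}(v)/S^g(R_\bu;v) = A_{V,\D_V}(v_0)/S(-K_V-\D_V;v_0)$.

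For polystability, the same identity plus the existence of quasi-monomial delta-minimizers \cite[Lemma 4.14]{BLXZ23} and the reduced uniform K-stability criterion \cite{LXZ22} carry over directly: strict weighted K-semistability of $(\tilde{Y},\D_{\tilde{Y}}+aV_0,\xi_0)$ produces, modulo a twist by $\xi\in N(\IT)_\IR$, a non-toric valuation $v_0$ on $V$ with $A_{V,\D_V}(v_0)=S(-K_V-\D_V;v_0)$, which is equivalent to strict K-semistability of $(V,\D_V)$.

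The main obstacle is bookkeeping the two cases $r>1$ and $0<r\le 1$ simultaneously: in the latter case the boundary $aV_0$ truncates the moment polytope to $[-1,1-a]$ and shifts $A(V_0)$ to $1-a$, but these modifications cancel within the ratio $A(V_0)/S^g(V_0)=1$ because $\xi_0$ is defined with respect to the truncated polytope. Verifying that the refinement by $V_0$ is still controlled by $\delta(V,\D_V)$ (rather than some weighted variant on $V$) is the only place where one has to be careful; once this is done, the reduction in both regimes is formally identical to the cone computation.
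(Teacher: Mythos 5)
Your proposal is correct and follows essentially the same route as the paper: the paper's own proof of this theorem is literally ``following the same proof of Theorem \ref{Theorem: soliton of cone}'', i.e.\ refine by $V_\infty$ (and $V_0$), use the Futaki-vanishing condition on the truncated polytope to get $S^g(W_\bu;v_0)=rS(L;v_0)=S(-K_V-\D_V;v_0)$ and $A/S^g=1$ for the toric divisors, apply the weighted Abban--Zhuang estimate, and handle polystability via the identity $A_{\tilde Y,\cdot}(v)/S^g(R_\bu;v)=A_{V,\D_V}(v_0)/S(-K_V-\D_V;v_0)$ together with \cite[Lemma 4.14]{BLXZ23} and \cite{LXZ22}. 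Your bookkeeping of the two regimes ($\BP=[-1,1]$ versus $[-1,1-a]$ with $A(V_0)=1-a$) matches the paper's setup, so nothing further is needed.
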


\section{Conflict of interest statement}

On behalf of all authors, the corresponding author states that there is no conflict of interest.

\section{Data availability statement}

Our manuscript has no associated data.

\appendix
\section{Proof of Lemma \ref{Lemma: uniform bound of S^g_m/S^g}}
It follows directly from the same argument of \cite[Corollary 2.10]{BJ20} based on the following Lemma, which is a minor strengthening of \cite[Lemma 2.2]{BJ20}. 
\begin{lem}
For any $\varepsilon>0$ there exists $m_0=m_0(\varepsilon)$ such that $\int_\BO G \cdot \LE^g_m \le \int_\BO G\cdot\LE^g + \varepsilon$ for any $m\ge m_0$ and every concave function $G$ on $\BO$ satisfying $0\le G\le 1$. 
\end{lem}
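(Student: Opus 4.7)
The plan is to adapt the argument of \cite[Lemma 2.2]{BJ20} by inserting the bounded continuous weight $g\circ p$. Since $g$ is positive and continuous on the compact moment polytope $\BP$, it is bounded above, say $0<g\le M$, and $g\circ p$ is bounded continuous on $\BO$. The weak convergence $\LE_m\to\LE$ from \cite[Lemma 1.4]{Xu23} therefore upgrades automatically to weak convergence $\LE^g_m\to\LE^g$ against any bounded continuous test function, and the real content of the lemma is that this convergence is \emph{uniform} over the family $\mathcal F\coloneqq\{G\colon \BO\to[0,1]\mid G\text{ concave}\}$.

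The argument rests on two standard facts. First, every $G\in\mathcal F$ is automatically Lipschitz with constant $(\mathrm{dist}(K,\partial\BO))^{-1}$ on any compact subset $K\subseteq\interior(\BO)$, by the classical chord-extension argument applied to the concavity inequality and the uniform upper bound $G\le 1$; hence $\mathcal F|_K$ is uniformly equicontinuous and relatively compact in $C(K)$ by Arzel\`a--Ascoli. Second, choosing $K$ to be a compact convex polytope in $\interior(\BO)$ with $\LE(\partial K)=0$ and $\LE(\BO\setminus K)<\varepsilon/(4M)$, and invoking the Portmanteau theorem for $\LE_m\to\LE$, one obtains $\LE_m(\BO\setminus K)<\varepsilon/(2M)$ for all $m\ge m_1$; this controls the tail contribution $\int_{\BO\setminus K}G\cdot g\circ p\cdot(\LE_m+\LE)$ by at most $\varepsilon$ uniformly in $G\in\mathcal F$.

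Given $\varepsilon>0$, the plan is then: (i) fix such a $K$; (ii) choose a finite $\eta$-net $\{G_1,\dots,G_N\}\subseteq \mathcal F|_K$ in sup-norm, with $\eta$ small relative to $\varepsilon$, $M$, and the eventually uniform total mass $\LE_m(\BO)$; (iii) apply weak convergence to each $G_i$ to find $m_2$ with $\bigl|\int_K g\circ p\cdot G_i\,(\LE_m-\LE)\bigr|<\varepsilon/4$ for all $i$ and all $m\ge m_2$; (iv) for arbitrary $G\in\mathcal F$, approximate by the nearest $G_i$ and bound the residue by $M\eta\cdot\LE_m(\BO)$. Setting $m_0=\max\{m_1,m_2\}$ yields the required estimate. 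The main obstacle is purely bookkeeping: combining the tail, the net-approximation, and the weak-convergence estimates into exactly $\varepsilon$ rather than a multiple of it, which is routine once the equicontinuity of $\mathcal F|_K$ and the boundary-mass control are established.
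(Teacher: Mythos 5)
Your argument is correct, but it takes a genuinely different route from the paper. The paper proves the lemma by a quantitative lattice argument in the style of \cite[Lemma 2.2]{BJ20}: it compares, on each small cube $\alpha+[0,\tfrac1m]^n$, the average of $G\cdot g\circ p$ over the $2^n$ corners with the integral over the cube, using concavity of $G$ to make the unweighted corner-average term non-positive and uniform continuity of $g\circ p$ to absorb the weight error, and it controls the boundary region via the shrunken bodies $\BO^t$; this yields an explicit $m_0$ depending only on $\varepsilon$, $g$ and $\BO$. You instead run a soft compactness argument: the uniform Lipschitz bound $(\mathrm{dist}(K,\partial\BO))^{-1}$ for bounded concave functions (which uses the full two-sided bound $0\le G\le 1$, not just $G\le1$ as you wrote), Arzel\`a--Ascoli and a finite $\eta$-net on a compact $K\Subset\interior(\BO)$, weak convergence applied to the finitely many net elements, and Portmanteau with $\LE(\partial K)=0$ for the tail. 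This is valid and even gives the stronger two-sided uniform convergence $\int G\,\LE^g_m\to\int G\,\LE^g$ over the whole family; note only that when you pass from weak convergence to convergence of the truncated integrals $\int_K G_i\, g\circ p\,(\LE_m-\LE)$ you again need the continuity-set hypothesis $\LE(\partial K)=0$ (the integrand $\mathbf{1}_K G_i\, g\circ p$ is discontinuous along $\partial K$), which you have arranged, so this is the same Portmanteau-type step rather than plain weak convergence. The trade-off is that your argument is non-effective (no explicit $m_0$) and uses concavity only through equicontinuity, whereas the paper's computation is explicit and stays closer to the original \cite{BJ20} proof it is meant to strengthen.
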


\begin{proof}
For any $t>0$, let $\BO^t=\{\alpha\in\IR^n\mid\alpha+[-t,t]\seq \BO\}$. This is a closed subset of $\BO$ and converges to $\BO$ as $t\to 0$. 

Fix $\varepsilon>0$. There exists $t>0$ such that $\LE(\BO\setminus\BO^{2t})<\varepsilon/4$. Since $\LE_m\to \LE$ converges weakly, we have $\limsup\, \LE_m(\BO\setminus\BO^{t})\le \LE(\BO\setminus\BO^{2t}) \le \varepsilon/4$. So there exists $m_1$ such that $\LE_m(\BO\setminus\BO^{t})\le \varepsilon/2$ for any $m\ge m_1$. Note that $g\circ p$ is uniformly continuous on $\BO$. Hence there exists $m_2$ such that $|g(p(\alpha))-g(p(\beta))| < \varepsilon/(2\LE(\BO))$ for any $\alpha, \beta\in\BO$ with $|\alpha-\beta|\le 2/m_2$. We may choose $m_0$ such that $m_0 \ge m_1,m_2$ and $t^{-1}$. 

For any $m\ge m_0$, we set 
\begin{eqnarray*} 
A'_m
&\coloneqq& 
\{\alpha \in \frac{1}{m}\IZ^n\mid
\alpha + [0,\frac{1}{m}]^n \seq \BO\}, \\
A_m
&\coloneqq&  
\{\alpha \in \frac{1}{m}\IZ^n\mid
\alpha + [-\frac{1}{m},\frac{1}{m}]^n \seq \BO\}.
\end{eqnarray*} 
It's clear that $\BO^t\cap \frac{1}{m}\IZ^n \seq A_m \seq A'_m$. 
For any $\alpha\in A'_m$, we denote by $g^\alpha_\max, g^\alpha_\min$ the maximum and minimum value of $g\circ p$ on $\alpha+[0,\frac{1}{m}]^n$ respectively. Since $m\ge m_2$, we have $g^\alpha_\max-g^\alpha_\min \le \varepsilon/(2\LE(\BO))$. We define
\begin{eqnarray*} 
f(\alpha)
&=&
\Big(
\sum_{w\in \{0,\frac{1}{m}\}^n} 
2^{-n}(G\cdot g\circ p)(\alpha +w)
\Big) 
- \int_{[0,\frac{1}{m}]^n} 
(G\cdot g\circ p)(\alpha +w)  \dif w . 
\end{eqnarray*}
Then we have
\begin{eqnarray*} 
f(\alpha)
&\le& g^\alpha_\max \Big(
\sum_{w\in \{0,\frac{1}{m}\}^n} 
2^{-n}G(\alpha +w)
- \int_{[0,\frac{1}{m}]^n} 
G(\alpha +w)  \dif w 
\Big)  \\
&& + (g^\alpha_\max-g^\alpha_\min) 
\int_{[0,\frac{1}{m}]^n} 
G(\alpha +w)  \dif w . 
\end{eqnarray*}
The first term on the right-hand side of the inequality is non-positive by the concavity of $G$. Hence
\begin{eqnarray*}
f(\alpha) \le 
\frac{\varepsilon}{2\LE(\BO)}\cdot 
\int_{\alpha + [0,\frac{1}{m}]^n} 
G\cdot \LE. 
\end{eqnarray*}
Then $\sum_{\alpha\in A'_m}
f(\alpha) \le \varepsilon/2$. We are ready to give the desired estimate. 
\begin{eqnarray*} 
&&\int_\BO G\cdot\LE^g 
\,\,=\,\,
\int_\BO G\cdot g\circ p\cdot \LE\\
&\ge&
\sum_{\alpha\in A'_m} 
\int_{\alpha + [0,\frac{1}{m}]^n} 
G\cdot g\circ p\cdot \LE 
\,\,=\,\, \sum_{\alpha\in A'_m}
\int_{[0,\frac{1}{m}]^n} 
(G\cdot g\circ p)(\alpha +w)  \dif w  \\
&=& \sum_{\alpha\in A'_m}
\Big(
\sum_{w\in \{0,\frac{1}{m}\}^n} 
2^{-n}(G\cdot g\circ p)(\alpha +w) 
-f(\alpha)
\Big) 
\,\,\ge\,\, \sum_{\alpha\in A_m}
(G\cdot g\circ p)(\alpha ) 
-\Big(
\sum_{\alpha\in A'_m}
f(\alpha)
\Big) \\
&\ge&
\int_{\BO^t}
(G\cdot g\circ p)\cdot \LE_m
-\frac{\varepsilon}{2} 
\,\,\ge\,\, 
\int_{\BO}
(G\cdot g\circ p)\cdot \LE_m
-\LE_m(\BO\setminus \BO^t)
-\frac{\varepsilon}{2} 
\,\,\ge\,\, 
\int_{\BO}
G\cdot \LE^g_m
-\varepsilon. 
\end{eqnarray*}
\end{proof}

\bibliographystyle{alpha}
\bibliography{ref}

\end{document}